\documentclass{amsart}

\title{More rigid ideals}
\author{Monroe Eskew}

\address{Kurt G\"odel Research Center \\
University of Vienna \\
25 W\"ahringer Strasse \\
1090 Wien, Austria.}

\date{}

\usepackage{color} 
\usepackage{amssymb} 
\usepackage{amsmath} 
\usepackage[utf8]{inputenc} 
\usepackage{enumerate}
\usepackage{amsthm}
\usepackage{cite}

\newtheorem{theorem}{Theorem}
\newtheorem{lemma}[theorem]{Lemma}
\newtheorem{proposition}[theorem]{Proposition}
\newtheorem{corollary}[theorem]{Corollary}

\newtheorem{claim}[theorem]{Claim}

\DeclareMathOperator{\dom}{dom}

\DeclareMathOperator{\ot}{ot}

\DeclareMathOperator{\cf}{cf}
\DeclareMathOperator{\cof}{cof}

\DeclareMathOperator{\col}{Col}
\DeclareMathOperator{\ord}{Ord}

\DeclareMathOperator{\supp}{supp}
\DeclareMathOperator{\crit}{crit}

\DeclareMathOperator{\ns}{NS}

\DeclareMathOperator{\spl}{Spl}
\DeclareMathOperator{\rcol}{\mathbb R \mathbb C}

\newcommand{\p}{\mathcal{P}}
\newcommand{\la}{\langle}
\newcommand{\ra}{\rangle}

\begin{document}
\maketitle

\begin{abstract}
We extend the results of \cite{rigid1}, showing the consistency of GCH with the statement that for all regular cardinals $\kappa \leq \lambda$, where $\kappa$ is the successor of a regular cardinal, there is a rigid saturated ideal on $\p_\kappa\lambda$.  We also show the consistency of some instances of rigid saturated  ideals on $\p_\kappa\lambda$ where $\kappa$ is the successor of a singular cardinal.
\end{abstract}

\section{Introduction}

A structure is said to be \emph{rigid} if it has no nontrivial automorphisms.  It follows from the principle Martin's Maximum (MM) that the boolean algebra $\p(\omega_1)/\ns$ is rigid.\footnote{See \cite{fms1}, \cite{larsonunique},  \cite{woodin-book}.}  An important component of this argument is that, under MM, this boolean algebra satisfies the $\omega_2$-c.c., or in other words $\ns_{\omega_1}$ is \emph{saturated}.  Saturated ideals are a way for small cardinals to mimic some properties of very large cardinals, by being the critical points of elementary embeddings between relatively rich transitive models that come about via relatively mild forcing.\footnote{See \cite{foremanhandbook} for background.}  The idea for rigidity in this context is to arrange that the forcing codes information into the manipulation of sufficiently absolute properties, which correlate to the details of the embedding, so that only one embedding can exist.

We will call an ideal $I$ over a set $X$ rigid when the quotient boolean algebra $\p(X)/I$ is rigid.  In \cite{rigid1}, Brent Cody and the author showed that it is possible to have rigid saturated ideals on other successor cardinals $\kappa = \mu^+$, by forcing an analogue of Martin's Axiom.  In these models, we have $2^\mu > \kappa$, and so we also investigated whether one can have rigid saturated ideals with GCH.  We were able to construct models of GCH with saturated rigid ideals on cardinals of the form $\kappa = \mu^+$, where $\mu$ is regular and uncountable, by having the ideal code information about manipulating the stationarity of subsets of $\mu$.  This technique is not possible for $\mu= \omega$, since the notion of stationarity trivializes there.  Here, we introduce a different coding method that solves the case $\mu = \omega$ and allows for a global result, which is not obviously achievable with the method of \cite{rigid1}.

\begin{theorem}
\label{global}
If ZFC is consistent with a huge cardinal, then there is a model of ZFC+GCH in which for every pair of regular cardinals $\kappa \leq \lambda$, where $\kappa$ is the successor of a regular cardinal $\mu$, there is a normal $\kappa$-complete ideal $I$ on $\p_\kappa\lambda$ such that $\p(\p_\kappa\lambda)/I$ is rigid, $\lambda^+$-c.c., and has a $\mu$-closed dense subset. 
\end{theorem}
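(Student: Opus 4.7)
The plan is to begin with a ground model containing a huge cardinal and carry out a reverse Easton iteration $\mathbb{P}$ of class length. At each regular cardinal $\mu$, one stage of $\mathbb{P}$ forces with a Levy-type collapse making $\mu^+$ equal to a previously huge cardinal $\kappa_\mu > \mu$, twisted together with a coding forcing $\mathbb{Q}_\mu$ designed to render the resulting quotient Boolean algebras rigid. Using reflection from the single huge (via, e.g., a preparatory Laver-style function) one may assume that there are enough huge cardinals cofinal in $\ord$ so that the iteration can consume a fresh huge at every successor-of-regular stage.

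To extract an ideal on $\p_{\mu^+}\lambda$ for a given regular $\lambda \geq \mu^+$, I would fix a huge embedding $j : V \to M$ with $\crit(j) = \kappa_\mu$, $j(\kappa_\mu) \geq \lambda$, and $M^{j(\kappa_\mu)} \subseteq M$, and perform the standard Laver--Kunen lift. The embedding lifts through the iteration $\mathbb{P}_\mu$ below stage $\mu$ by closure of $M$; to lift through $\col(\mu, <\kappa_\mu) * \dot{\mathbb{Q}}_\mu$ one factors the tail of $j(\mathbb{P})$ between $\mu$ and $j(\kappa_\mu)$ into a $\mu$-closed piece and a $\kappa_\mu$-c.c.\ piece, and constructs a master condition using a generic for the closed piece. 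The derived ideal $I = \{A : 1 \Vdash [\id \restriction \lambda]_{\dot G} \notin j(A)\}$ is then normal, $\mu^+$-complete, $\lambda^+$-saturated, and its quotient has a $\mu$-closed dense subset by the standard splitting of the tail forcing. Preservation of GCH follows from the Easton-support design together with a GCH-preserving choice of $\mathbb{Q}_\mu$.

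The heart of the argument is the design of $\mathbb{Q}_\mu$ to force rigidity of the quotient. Following \cite{rigid1}, one arranges that the generic filter encodes information in a sufficiently absolute way that any nontrivial automorphism $\sigma$ of $\p(\p_{\mu^+}\lambda)/I$, composed with the canonical generic ultrapower embedding $j_G$, would yield a second generic embedding incompatible with features pinned down by the coding. For $\mu$ regular uncountable, \cite{rigid1} achieves this by manipulating stationarity of subsets of $\mu$; for $\mu = \omega$ stationarity trivializes, so the paper must introduce a new coding scheme. I expect this new method to exploit a generic combinatorial invariant visible to the quotient algebra --- plausibly a pattern in the generic enumerations used by the Levy collapse, or an auxiliary structure (such as a coherent sequence of subsets of $\mu^+$ built from the collapse generic) that the quotient can detect --- and which then applies uniformly to all regular $\mu$, yielding the global statement rather than just the case $\mu = \omega$.

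The main obstacle is the construction of $\mathbb{Q}_\omega$ and verification that it simultaneously preserves the saturated-ideal machinery, preserves GCH, lifts under the huge embedding, and forces rigidity of the quotient, while being absorbed into the collapse well enough that the trivial $\omega$-closure requirement is compatible with the nontrivial role the coding must play. Once rigidity is established stage by stage, the global statement follows by a standard Easton-support preservation argument: ideals constructed at stage $\mu$ survive later forcing thanks to the closure of earlier stages and the relative smallness of later ones, so all the constructed ideals coexist in a single final model of GCH.
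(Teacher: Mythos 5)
Your proposal correctly identifies the overall architecture---an Easton-support iteration of collapse-type forcings below a huge cardinal, lifting an embedding to obtain a dual ideal, and preservation of the constructed ideals by later stages---but it leaves the central novelty of the paper as an acknowledged unknown, and this is precisely the nontrivial content. You write that you ``expect this new method to exploit a generic combinatorial invariant visible to the quotient algebra,'' guessing at a pattern in the collapse enumerations or a coherent sequence of subsets of $\mu^+$. The paper does not pair a L\'evy collapse with a separate coding forcing $\mathbb{Q}_\mu$. Instead it builds a single partial order $\rcol(\mu,\kappa)$ (the ``rigid collapse'') as a projection of a restricted Easton product $\prod^E_{\alpha}\col(\alpha,{<}\kappa)$, where the index set of collapses used at level $n{+}1$ is chosen \emph{depending on the generic for level $n$}, via a canonical coding of that generic as a subset of $\kappa$. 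The rigidity invariant is the $\Sigma_1$ statement $\spl(\nu,\kappa,V)$ (asserting that a large set in $W$ splits every $\nu$-set from $V$ by excluding large pieces), which is forced to hold by $\col(\nu,{<}\kappa)$ (Lemma~\ref{pos}) and forced to fail when $\nu$ is omitted from the product (Lemmas~\ref{negs},~\ref{holes}). Because $\spl$ is $\Sigma_1$ in parameters from $V$, a second generic living inside the extension would have to agree on it, which forces the two generics to coincide. Your guess never lands on this, so the key rigidity claim in your outline is unsupported.

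Two further discrepancies are worth flagging. First, the preservation of rigidity, saturation, and chain condition through the tail of the iteration is not the ``standard Easton-support preservation argument'' you invoke; the paper requires Lemma~\ref{absoluteness} (a $\Sigma^1_1$-absoluteness result for $\kappa$-c.c.\ forcing followed by $\kappa$-strategically-closed forcing) together with Lemma~\ref{robust} (robustness of $\rcol$'s rigidity under small forcing). Without these you cannot conclude that the stage-$i$ ideal remains rigid and $\lambda^+$-c.c.\ after the rest of the iteration. Second, you propose consuming ``a fresh huge at every successor-of-regular stage,'' but the theorem only assumes one huge cardinal; the paper uses Proposition~\ref{tourney} to extract from a single huge an unbounded set $A$ such that any pair from $A$ carries an almost-huge tower, and then builds each ideal from the relevant $(\alpha_i,\alpha_j)$-tower via the projection Lemma~\ref{idealfromah}, rather than via a master-condition lifting of a full huge embedding. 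Your reverse-Easton master-condition route is a legitimate alternative style for such constructions, but as written it both overstates the hypotheses and still leaves the crucial coding gadget unspecified.
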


Addressing the case of successors of singulars, we have:

\begin{theorem}
\label{sing}
Suppose $\theta$ is a huge cardinal and $\nu < \theta$ is regular and uncountable. Then there is a forcing extension in which for some $\theta' < \theta$, $V_{\theta'} \models$ ZFC + GCH + ``There is a singular cardinal $\mu$ of cofinality $\nu$ such that for every regular $\lambda \geq \kappa=\mu^+$ there is a normal $\kappa$-complete ideal $I$ on $\p_\kappa\lambda$ such that $\p(\p_\kappa\lambda)/I$ is rigid and $\lambda^+$-c.c.''
\end{theorem}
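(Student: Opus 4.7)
The plan is to adapt the proof of Theorem \ref{global} by composing its coding preparation with a singularization forcing. Fix a huge embedding $j : V \to M$ with $\crit(j) = \theta$, $j(\theta) = \theta^*$ and $M^{\theta^*} \subseteq M$. Pick a regular cardinal $\mu_0$ with $\nu < \mu_0 < \theta$; this will be the future $\mu$ once its cofinality is reduced to $\nu$. The strategy is to do all the forcing below $\theta$ and take $\theta'$ to be an appropriate inaccessible in the extension that is above every $\lambda$ of interest, so that $V_{\theta'}$ simultaneously sees the rigid saturated ideals for all regular $\lambda \geq \kappa$.

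The forcing proceeds in three stages. First, a Laver-style preparation making the hugeness of $\theta$ indestructible under the forcings we still want to apply. Second, a version of the iteration from the proof of Theorem \ref{global}, tuned to install the rigidity-coding structure at the eventual value of $\kappa = \mu_0^+$ rather than using stationarity in $\mu_0$ itself. Third, a singularization forcing that changes $\cf(\mu_0)$ to $\nu$ by a Magidor-type forcing and simultaneously collapses the interval $(\mu_0, \theta)$ so that $\theta$ becomes $\mu_0^+$. One then lifts $j$ stage by stage: the first two stages are handled as in Theorem \ref{global} (absorption plus master conditions), and for the third one combines a master condition for the collapse part with the standard analysis of Prikry-type forcing. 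The derived generic ultrafilter yields, for each regular $\lambda \geq \kappa$ in $V_{\theta'}$, a normal $\kappa$-complete ideal on $\p_\kappa\lambda$ of the required $\lambda^+$-c.c., with the rigidity inherited from the preserved coding. GCH in the final model is maintained because each stage is either small or sufficiently closed.

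The hardest part will be verifying that the rigidity-coding method of Theorem \ref{global}, whose design is geared to \emph{regular} $\mu$, is undisturbed by the singularization. The main subtleties I expect are: (i) laying out the coding so that $j$ applied to the codes agrees with the master condition for the collapse of $(\mu_0, \theta)$, which dictates exactly which ``information'' can be absorbed into the lift; (ii) ruling out new automorphisms of $\p(\p_\kappa\lambda)/I$ introduced by the change of cofinality, which should follow from the fact that the coded properties are absolute enough to pass through the Magidor-type quotient; and (iii) arranging the Laver preparation so that the lifting goes through uniformly in $\lambda$, giving the ideals for every regular $\lambda \geq \kappa$ from a single preparation. Once these are in place, the conclusion follows by collecting the lifted embeddings inside a suitable $V_{\theta'}$.
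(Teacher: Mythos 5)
The architecture of your proposal differs from the paper's in a way that creates real gaps. In the paper, the huge cardinal $\theta$ is never collapsed; its only role is to furnish (via Proposition~\ref{tourney}) an unbounded set $A \subseteq \theta$ of cardinals with a $(\alpha,\beta)$-tower between any two, and the entire construction then lives inside $V_\theta$. The cardinal $\mu$ that gets singularized is a \emph{supercompact} cardinal below $\theta$, $\kappa = \mu^+$ is realized by a rigid collapse of some Mahlo $\alpha_1 < \theta$, and the final $\theta'$ can be taken to be an inaccessible $\eta < \theta$. Your plan to have $\theta$ itself become $\kappa = \mu_0^+$ by collapsing $(\mu_0,\theta)$ cannot produce a $\theta' < \theta$ as the theorem demands (you would need $\theta' > \kappa = \theta$), and you would also lose the supply of towers that the paper relies on to handle every regular $\lambda$ up to $\theta'$ simultaneously, since a single huge embedding at $\theta$ only reaches to $j(\theta)$.

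More substantively, your Laver preparation is aimed at the wrong cardinal. Preserving the hugeness of $\theta$ does not help with the singularization: the singularization in the paper is a Radin forcing built from a normal $\mu$-complete ultrafilter on $\p_\mu\kappa$, which must exist \emph{after} the rigid-collapse iteration $\mathbb P_i$ has added many subsets of $\kappa=\mu^+$. This is exactly what Laver indestructibility of $\mu$'s supercompactness provides (with the stronger form that the lifted generic $j_U(G)\restriction(\mu+1)$ agrees with the actual generic, which is used critically in the reflection argument). Without preparing $\mu$ in this way, there is no ultrafilter from which to build the singularization over $V[\mathbb P_i]$, and your step three does not get off the ground. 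Also note the paper forces the ideal first and singularizes afterward; the order matters because the rigidity argument needs to compare two hypothetical generics for $\rcol(\mu,\delta) * \dot j(\mathbb Q_\nu)$ through a duality isomorphism that only exists once the ideal is in place.

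Finally, the step you flag as hardest---that rigidity survives the change of cofinality---is precisely where the paper does the real work, and ``absoluteness of the coded properties through the Magidor-type quotient'' is not a substitute. The paper uses Foreman's duality theorem (Theorem~\ref{dualitynicecase}) to express $\p(\p_\kappa\lambda)/\bar I_\lambda$ as a quotient of $\mathcal B(\p(\p_\kappa\lambda)/I_\lambda * \dot j(\mathbb Q_\nu))$; supposes two distinct generics $\hat H_0 * \hat K_0$ and $\hat H_1 * \hat K_1$ produce the same extension; observes they must differ at the collapse coordinate, giving $V[\hat H_0]\models\spl(\beta,\delta,V)$ and $V[\hat H_1]\models\neg\spl(\beta,\delta,V)$; and then reflects this disagreement below $\mu$ using Laver's theorem, the Radin club, and normality of the derived measures to find $\alpha<\mu$ and $\gamma<\alpha^+$ where the two models disagree about $\spl(\gamma,\alpha^+,V'[G\restriction\alpha])$ while sharing the same $\p(\alpha^+)$---a contradiction. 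This reflection is the whole point, and it works because $\mu$ was Laver-prepared and the $\spl$-statement is $\Sigma_1$. Your outline would need to produce this argument (or an alternative of comparable strength), and as written it does not.
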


In Section \ref{rigcolsec}, we introduce a certain restricted product of L\'evy collapses which is rigid, preserves GCH, and changes a Mahlo cardinal into the successor of a chosen regular cardinal.  We prove Theorem \ref{global} in Section \ref{idealsec}.  In Section \ref{singsec}, we prove Theorem \ref{sing} and discuss why it is hard to combine the conclusions of the two theorems into one model.

The full generality of the following lemma is only applied in Section \ref{singsec}, but we state it here because its interest extends beyond the present topics.
\begin{lemma}
 \label{absoluteness}
 Suppose $\kappa$ is a regular uncountable cardinal, $\mathbb P$ is $\kappa$-c.c., $\mathbb Q$ is $\kappa$-strategically closed, and $\dot{\frak A}$ is a $\mathbb P$-name for a structure of size $\kappa$ in a language $\dot{\mathcal L}$.  Suppose $G \subseteq \mathbb P$ is generic and that in $V[G]$, $S$ is a set of symbols, $T$ is a set of sentences in $\mathcal L' = \mathcal L \cup S$, and $|T|<\kappa$.  If $\mathbb Q$ forces over $V[G]$ that there is an expansion $\frak A'$ of $\frak A$ to the language $\mathcal L'$ such that $\frak A' \models T$, then this is already true in $V[G]$.
\end{lemma}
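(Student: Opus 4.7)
The plan is to reduce $T$ to a universal theory via Skolemization, then use the $\kappa$-strategic closure of $\mathbb Q$ to build the desired expansion directly in $V[G]$ by a length-$\kappa$ bookkeeping construction.

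First, working in $V[G]$, I would Skolemize $T$. For each sentence $\varphi \in T$, take a prenex form and introduce one new function symbol for each existential quantifier, replacing $\varphi$ by its universal Skolemization. Since $|T| < \kappa$ and each sentence is finite, this yields a set $T^*$ of fewer than $\kappa$ universal sentences in a language $\mathcal L^* \supseteq \mathcal L'$ obtained by adjoining fewer than $\kappa$ new function symbols, each of finite arity. Using AC in $V[G][H]$, the given $\mathcal L'$-expansion of $\mathfrak A$ can be further expanded to a model of $T^*$, so $\mathbb Q$ actually forces the existence of an $\mathcal L^*$-expansion of $\mathfrak A$ modeling $T^*$; fix a $\mathbb Q$-name $\dot{\mathfrak A}^*$ for such. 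Without loss of generality the universe of $\mathfrak A$ is $\kappa$. Since an $\mathcal L'$-expansion modeling $T$ can be recovered by reducing an $\mathcal L^*$-expansion modeling $T^*$, it suffices to build the latter in $V[G]$.

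Next, in $V[G]$, enumerate all pairs $(s, \bar a)$ with $s$ a new symbol appearing in $T^*$ and $\bar a \in \kappa^{n_s}$ as a sequence $\langle (s_\alpha, \bar a_\alpha) : \alpha < \kappa \rangle$; this has length $\kappa$ because fewer than $\kappa$ symbols are involved and each has finite arity. Fix a winning strategy $\sigma$ for Player II in the length-$\kappa$ game witnessing $\kappa$-strategic closure of $\mathbb Q$. Build a descending sequence $\langle q_\alpha : \alpha < \kappa \rangle$ in $\mathbb Q$: at successor stage $\alpha = \beta + 1$, have Player I choose $q_\alpha \leq q_\beta$ deciding the value of $s_\alpha^{\dot{\mathfrak A}^*}(\bar a_\alpha)$; at limit stages, have Player II play via $\sigma$ to obtain a lower bound. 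In $V[G]$, define the $\mathcal L^*$-expansion $\mathfrak A^*$ of $\mathfrak A$ according to the values decided along the construction.

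Finally, I would verify $\mathfrak A^* \models T^*$. Fix $\forall \bar x\,\varphi(\bar x) \in T^*$ and a tuple $\bar a \in \kappa^{<\omega}$. The quantifier-free formula $\varphi(\bar a)$ depends on only finitely many basic atomic facts $s(\bar b)$ about $\mathfrak A^*$ (namely those arising from evaluating the subterms of $\varphi(\bar a)$), and all such facts appear in the enumeration. For $\alpha$ large enough that they have all been decided, $q_\alpha$ forces each of them to take the recorded value, while $q_\alpha$ also forces $\varphi(\bar a)$ (since $\mathbb Q$ forces $\forall \bar x\,\varphi(\bar x)$). Hence the Boolean evaluation of $\varphi(\bar a)$ under the recorded decisions must be true, so $\mathfrak A^* \models \varphi(\bar a)$. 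The main obstacle I anticipate is the bookkeeping step: one must carry out Skolemization inside $V[G]$, check that the set of basic atomic facts really has size $\kappa$, and verify that a single length-$\kappa$ play against $\sigma$ suffices to decide all of them, which hinges on $|T| < \kappa$ and on $\mathbb Q$ providing lower bounds at every limit below $\kappa$.
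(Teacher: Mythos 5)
Your Skolemization idea is a reasonable substitute for the paper's elementary-chain argument, but there is a genuine gap in \emph{where} your transfinite construction takes place. You run the length-$\kappa$ game in $V[G]$, choosing each move $q_\alpha$ by consulting the $\mathbb Q$-name $\dot{\mathfrak A}^*$, which is an object of $V[G]$. Consequently the descending sequence $\langle q_\alpha : \alpha < \kappa\rangle$ lives in $V[G]$, not $V$. But the winning strategy $\sigma$ for Player II is a ground-model object; $\kappa$-strategic closure of $\mathbb Q$ (and even plain $\kappa$-closure) is in general \emph{not} preserved by $\kappa$-c.c.\ forcing, because $\mathbb P$ adds new descending sequences of length $<\kappa$ of elements of $\mathbb Q$ that $\sigma$ has no instructions for and that need not have any lower bound. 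So at limit stages your construction may simply stall. A tell-tale sign of the gap is that your argument never actually uses the hypothesis that $\mathbb P$ is $\kappa$-c.c.

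The paper's proof avoids this by carrying out the entire bookkeeping \emph{in $V$}, on the product $\mathbb P\times\mathbb Q$: at stage $i$ one constructs a maximal antichain $A_i\subseteq\mathbb P$ (of size $<\kappa$ by $\kappa$-c.c.) while simultaneously descending in $\mathbb Q$ along $\sigma$, arranging that every pair $(p,q_i)$ with $p\in A_i$ decides the relevant fact. Because every initial segment of the $\mathbb Q$-chain is then a ground-model sequence built according to the ground-model strategy, $\sigma$ can always be consulted at limits. Only after the full $V$-construction does one pass to $V[G]$ and read off the expansion by selecting the unique $p_i\in A_i\cap G$ at each coordinate. If you relocate your construction to $V$ in this fashion---replacing the single choice of $q_{\beta+1}$ by a maximal antichain of $\mathbb P$-conditions each paired with the same $\mathbb Q$-condition---your Skolemization/universal-theory verification goes through essentially as you wrote it.
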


We note the following corollaries.  Suppose $\kappa, \mathbb P, \mathbb Q$ are as above.
\begin{enumerate}
\item In $V^{\mathbb P}$, $\mathbb Q$ cannot change the $\Sigma^1_1$ theory of structures of size $\leq \kappa$ with a language of size $<\kappa$.
 \item In $V^{\mathbb P}$, $\mathbb Q$ preserves $\kappa$-c.c.\ partial orders.
  \item In $V^{\mathbb P}$, $\mathbb Q$ preserves stationary subsets of $\kappa$.
 \item If $T$ is a tree of size $\leq \kappa$ in $V^{\mathbb P}$ and it has a cofinal branch in $V^{\mathbb P \times \mathbb Q}$, then it has a cofinal branch in $V^{\mathbb P}$.
 \item If $\frak A$ is a rigid structure of size $\leq \kappa$ in a language of size $<\kappa$ in $V^{\mathbb P}$, then $\frak A$ is rigid in $V^{\mathbb P \times \mathbb Q}$.
\end{enumerate}

\begin{proof}[Proof of Lemma \ref{absoluteness}]
We may assume that the domain of $\frak A$ is forced to be $\kappa$.
 In $V^{\mathbb P}$ let $\mathcal L''$ be the subset of $\mathcal L'$ that is mentioned in $T$; we have $|\mathcal L''| <\kappa$.  In $V^{\mathbb P \times \mathbb Q}$, let $\frak A'' = \frak A' \restriction \mathcal L''$.  In this extension, there is a club $C \subseteq \kappa$ such that $\frak A'' \restriction \alpha \prec \frak A''$ for all $\alpha \in C$.
 
 Let $\la \dot s_i : i < \delta < \kappa \ra$ be a sequence of $\mathbb P$-names for the elements of $\dot S \cap \dot{\mathcal L}''$, and for each $i<\delta$, let $\dot X_i$ be a $\mathbb P \times \mathbb Q$-name for the interpretation of $s_i$ in $\frak A'$.  Let $\dot X$ be a $\mathbb P \times \mathbb Q$-name for a subset of $\kappa$ which canonically codes $\la X_i : i < \delta \ra$.
  
  Let $\dot C$ be a $\mathbb P \times \mathbb Q$-name for the club $C$ above.   Let $\sigma$ witness the strategic closure of $\mathbb Q$.
 Let $(p_0^0,q^0_0)$ force some ordinal $\alpha_0^0$ to be in $\dot C$, and decide whether $0 \in \dot X$.  Suppose we have a $\la (p_0^i,q_0^i),\alpha_0^i : i <j \ra$ such that the $p_0^i$'s form an antichain, the $q_0^i$'s are a descending sequence following $\sigma$, the $\alpha_0^i$'s are increasing, and $(p_0^i,q_0^i)$ forces $\alpha_0^i \in \dot C$ and decides whether $0 \in \dot X$.  If the $p_0^i$'s do not form a maximal antichain, pick $p^j_0,q^j_0,\alpha_0^j$ such that for all $i<j$, $p_0^j \perp p_0^i$ and $\alpha_0^j > \alpha_0^i$, $q_0^j$ is a lower bound to the $q_0^i$'s following $\sigma$,  and $(p_0^j,q_0^j) \Vdash \alpha_0^j \in \dot C$.  If the antichain is maximal, let $A_0 = \{ p_0^i : i < j \}$, let $q_0$ be a lower bound to the chain of $q_0^i$'s chosen according to $\sigma$, and let $\alpha_0 = \sup_{i<j} (\alpha_0^j+1)$.  By the chain condition, this will occur at some $j<\kappa$.

 Continue in this way, getting a sequence of maximal antichains in $\mathbb P$, $\la A_i : i < \kappa \ra$, a descending sequence in $\mathbb Q$ that follows $\sigma$, $\la q_i : i < \kappa \ra$, and an increasing continuous sequence of ordinals $\la \alpha_i : i < \kappa \ra$, such that for all $i$ and all $p \in A_i$, $(p,q_{i})$ decides whether $i \in \dot X$, and if $i$ is a limit, then $(1,q_i) \Vdash \alpha_i \in \dot C$.
 
 Let $G \subseteq \mathbb P$ be generic.  For each $i < \kappa$, there is a unique $p_i \in A_i \cap G$.  Let 
 $$Y = \{ i : (p_i,q_i) \Vdash^V_{\mathbb P \times \mathbb Q} i \in \dot X \} = \{ i : (\exists p \in G)(\exists j<\kappa) (p,q_j) \Vdash^V_{\mathbb P \times \mathbb Q} i \in \dot X \}.$$
 We claim that interpreting the symbols $s_i$ according to $Y$ produces an expansion $\frak B$ of $\frak A$ that satisfies $T$. 
 There is a club of limit ordinals $D \subseteq \kappa$ such that for all $\beta \in D$, $\alpha_\beta = \beta$ and $\beta$ is closed under our coding scheme, in the sense that $Y \cap \beta$ codes a sequence $\la Y_i^\beta : i < \delta \ra$ of interpretations for $\la s_i \restriction \beta : i <\delta \ra$.  For such $\beta$, we have that $(1,q_\beta) \Vdash \frak A'' \restriction \beta \prec \frak A''$, and thus that $\frak A'' \restriction \beta \models T$.  Since $(p_i,q_\beta)$ decides whether $i \in \dot X$ for all $i < \beta$, $q_\beta \Vdash^{V[G]}_{\mathbb Q} \dot X \cap \beta = Y \cap \beta$, and thus $q_\beta \Vdash^{V[G]}_{\mathbb Q} \frak B \restriction \beta = \frak A'' \restriction \beta$.  Since $\la \frak B \restriction \beta : \beta \in D \ra$ forms an elementary chain, $\frak B \models T$.
 \end{proof}

\section{A rigid collapse}
\label{rigcolsec}

This section is devoted to a proof of the following:
\begin{theorem}
\label{main}
 Suppose $\kappa$ is a Mahlo cardinal and $\mu < \kappa$ is regular.
 Then there is a $\mu$-closed, $\kappa$-c.c.\ partial order $\rcol \subseteq V_\kappa$
forcing $\kappa = \mu^+$, and whenever $G \subseteq \mathbb \rcol$ is generic over $V$,
then in $V[G]$, $G$ is the unique filter which is $\rcol$-generic over $V$.
\end{theorem}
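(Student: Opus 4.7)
My plan is to construct $\rcol$ as a suitably restricted Easton-style product of L\'evy-collapse-like factors indexed by the inaccessibles below $\kappa$, where each factor is a $\mu$-closed, $\alpha^+$-c.c.\ forcing of size $\alpha$ that collapses $\alpha$ to $\mu$. The Mahloness of $\kappa$ plays two roles: it guarantees a stationary supply of inaccessibles to use as coordinates, and it makes a delta-system reflection argument work for the chain condition. To achieve rigidity of the generic, I would replace the usual homogeneous $\col(\mu,\alpha)$ at each coordinate, or impose a support restriction, so that local data encodes the generic below. A natural choice is to fix in $V$ a diamond-like bookkeeping sequence $\la S_\xi : \xi \in A \ra$ on a set $A$ of inaccessibles, and require conditions $p$ to satisfy a coding rule: whether certain coordinates above $\xi$ appear in $\supp(p)$, or what values $p$ takes there, is prescribed by whether a canonical code for $p \restriction \xi$ matches $S_\xi$.

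Once the definition is fixed, I would verify the structural properties in order. The $\mu$-closure follows from the $\mu$-closure of each factor together with the $\mu$-sized support restriction, provided the coding rule itself is closed under decreasing $\mu$-sequences. The $\kappa$-c.c.\ comes from a delta-system argument: given $\kappa$ many conditions, use Mahloness to pass to a stationary set of inaccessibles $\alpha$ below which the supports of a $\kappa$-sized subfamily form a delta-system with root in $V_\alpha$, and then use the $\alpha^+$-c.c.\ of the product up to $\alpha$ to thin further. That $\kappa$ becomes $\mu^+$ is immediate, since every $\alpha < \kappa$ is collapsed while $\kappa$ itself is preserved by the c.c.

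The main content is uniqueness of the generic. Let $G$ be generic and suppose $H \in V[G]$ is another $V$-generic filter for $\rcol$. I would show by induction on $\xi \in A$ that $H \cap \rcol_\xi = G \cap \rcol_\xi$, where $\rcol_\xi$ is the natural initial segment of the product. The inductive step uses a density argument in $V$: below any condition, one can extend so that the coding data at coordinates in $A$ above $\xi$ picks out the correct $G \restriction \xi$ unambiguously, and by genericity of $H$ this forces $H \restriction \xi$ to equal $G \restriction \xi$. Putting these together at the top gives $H = G$.

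The hard part is calibrating the coding. It must be informative enough that $G \restriction \xi$ is genuinely determined by the local data visible in $V[G]$ at stages above $\xi$ (so that the uniqueness induction goes through), and at the same time it must be sparse and absolute enough that (i) below every condition and every dense set in $V$, one can extend to meet the dense set while respecting the coding, and (ii) the delta-system argument for $\kappa$-c.c.\ is not spoiled by the coding constraints cutting down the number of compatible conditions. Reconciling these tensions, using the reflection properties granted by Mahloness of $\kappa$, is where the real work of the theorem lies.
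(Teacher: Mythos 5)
Your proposal sketches a program but does not contain the key mechanism that makes the theorem work, and the mechanism you hint at seems unlikely to close the gap. You propose to enforce rigidity by constraining the \emph{conditions}: a diamond-like sequence $\la S_\xi \ra$ dictates what a condition $p$ must look like at coordinates above $\xi$, depending on whether $p\restriction\xi$ matches $S_\xi$. But any filter that is $\rcol$-generic over $V$ consists of conditions satisfying this rule, so the rule alone gives you no way to distinguish two generics $G$ and $H \in V[G]$ from inside $V[G]$. Your inductive step --- ``by genericity of $H$ this forces $H\restriction\xi = G\restriction\xi$'' --- is the crux, and it is exactly what is not established: genericity of $H$ over $V$ means $H$ meets all $V$-dense sets, which says nothing about $H$ agreeing with $G$. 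What you need is an \emph{invariant of the extension} (not of individual conditions) that recovers the generic, together with an argument that this invariant is absolute enough to be computed the same way from any filter in $V[G]$ that is generic over $V$. You correctly flag this as ``where the real work lies,'' but that work is the theorem.

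The paper's construction is structurally different in two ways. First, the coding is not in the conditions but in the \emph{shape of the forcing}: $\rcol$ is a (projection of a) finite-support limit of iterands $\mathbb P_0 * \mathbb P_1 * \cdots$, where $\mathbb P_{n+1}$ is a sub-product $\prod^E_{\alpha \in B_{n+1}} \col(\alpha,{<}\kappa)$ whose index set $B_{n+1}$ is determined bit-by-bit by the $\mathbb P_n$-generic. So two distinct generics literally run \emph{different products of collapses}. Second, and crucially, the paper supplies the absolute invariant you are missing: the $\Sigma_1$ statement $\spl(\nu,\kappa,V)$, which asserts the existence in the extension of a large subset of $\kappa$ ``splitting'' all $\nu$-sized ground-model sets. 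Lemma \ref{pos} shows $\col(\nu,{<}\kappa)$ forces $\spl(\nu,\kappa)$, and Lemma \ref{holes} shows that an Easton product of collapses \emph{omitting} the coordinate $\nu$ forces $\neg\spl(\nu,\kappa)$. Since $\spl$ is $\Sigma_1$ (hence upward absolute) and $\neg\spl$ is $\Pi_1$ (hence downward absolute along $V[G'] \subseteq V[G]$), if $G$ and $G'$ disagree at some bit $i$ at stage $n$, then $V[G']$ and $V[G]$ would have to disagree about $\spl(\alpha_i^{+n+2},\kappa,V)$, which is impossible. This is the argument your sketch needs but does not have; the Mahlo/$\Delta$-system part of your outline for $\kappa$-c.c.\ and the $\mu$-closure via supports are fine and match the paper, but they are the routine part.
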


Let us start with a few well-known forcing facts.  If $\mathbb P$ and $\mathbb Q$ are partial orders with maximum elements $1_{\mathbb P}$ and $1_{\mathbb Q}$ respectively, a map $\pi : \mathbb P \to \mathbb Q$ is called a \emph{projection} if $\pi(1_{\mathbb P}) = 1_{\mathbb Q}$, $\pi$ is order-preserving, and $\pi$ has the property that whenever $q \leq \pi(p)$, there is $p' \leq p$ such that $\pi(p') \leq q$.  A map $e : \mathbb P \to \mathbb Q$ is called an \emph{embedding} if it is order- and antichain-preserving.  An embedding is \emph{regular} if it preserves maximal antichains and \emph{dense} if its range is a dense subset of the codomain.  For every partial order $\mathbb P$, there is a complete boolean algebra $\mathcal B(\mathbb P)$ and a dense embedding $d : \mathbb P \to \mathcal B(\mathbb P)$.

\begin{lemma}
 Suppose $\pi : \mathbb P \to \mathbb Q$ is a projection.  If $H \subseteq \mathbb Q$, we write $\mathbb P/H$ for $\pi^{-1}[H]$.
 \begin{enumerate}
  \item If $G$ is $\mathbb P$-generic over $V$ and $H = \pi[G]$, then $H$ is $\mathbb Q$-generic over $V$, and $G$ is $\mathbb P/H$-generic over $V[H]$.
  \item If $H$ is $\mathbb Q$-generic over $V$, and $G$ is $\mathbb P/H$-generic over $V[H]$, then $G$ is $\mathbb P$-generic over $V$.
 \end{enumerate}
\end{lemma}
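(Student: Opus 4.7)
The plan is to prove (1) and (2) separately, in each case first checking the filter properties of $H$ (resp.\ $G$) and then genericity.

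For (1), I would take $H$ to be the upward closure in $\mathbb Q$ of the literal image $\pi[G]$. That $H$ is a filter follows from directedness of $G$ together with the projection property: given $q_1,q_2 \in H$ with witnesses $\pi(p_1) \leq q_1$ and $\pi(p_2) \leq q_2$, pick $p \in G$ below both $p_i$; then $\pi(p) \leq q_1, q_2$. For $V$-genericity of $H$, given a maximal antichain $A \subseteq \mathbb Q$ in $V$, the set $E_A = \{ p \in \mathbb P : \pi(p) \leq q \text{ for some } q \in A \}$ is dense in $\mathbb P$: any $p$ has $\pi(p)$ compatible with some $q \in A$ by maximality, and the projection property produces $p' \leq p$ with $\pi(p') \leq q$. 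Meeting $E_A$ with $G$ gives $A \cap H \neq \emptyset$.

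The main work is showing $G$ is $\mathbb P/H$-generic over $V[H]$. Given a $\mathbb Q$-name $\dot D$ and $q_0 \in H$ forcing $\dot D$ to be dense open in $\mathbb P/\dot H$, I would consider
\[
E = \{ p \in \mathbb P : \pi(p) \leq q_0 \text{ and } \pi(p) \Vdash_{\mathbb Q} \check p \in \dot D \}.
\]
The crux is to verify $E$ is dense below every $p^*$ with $\pi(p^*) \leq q_0$: by the forcing theorem, some $q_1 \leq \pi(p^*)$ and some $p_1 \in \mathbb P$ satisfy $q_1 \Vdash \check p_1 \leq \check p^* \wedge \check p_1 \in \dot D$, which forces $\check p_1 \in \mathbb P/\dot H$ and hence $q_1 \leq \pi(p_1)$; the projection property then furnishes $p_2 \leq p_1 \leq p^*$ with $\pi(p_2) \leq q_1$, and $p_2 \in E$ since $\dot D$ is forced open. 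Picking $p^* \in G$ with $\pi(p^*) \leq q_0$ (which exists because $q_0 \in H = \pi[G]^{\uparrow}$) gives $p \in G \cap E \subseteq G \cap D$.

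For (2), closure of $G$ upward in $\mathbb P$ is automatic: if $p \in G$ and $p \leq p'$, then $\pi(p) \leq \pi(p') \in \mathbb Q$, and since $\pi(p) \in H$ and $H$ is a filter, $\pi(p') \in H$, so $p' \in \mathbb P/H$, hence in $G$. For $V$-genericity, given $D \subseteq \mathbb P$ dense in $V$ and $p_0 \in G$, I would show $D \cap \mathbb P/H$ is dense below $p_0$ in $\mathbb P/H$ in $V[H]$ by reducing to the already-known genericity of $H$: the set
\[
D^* = \{ q \in \mathbb Q : q \leq \pi(p) \text{ for some } p \in D \text{ with } p \leq p_0 \}
\]
is dense in $\mathbb Q$ below $\pi(p_0)$ in $V$, since any $q \leq \pi(p_0)$ yields (by the projection property) some $p' \leq p_0$ with $\pi(p') \leq q$, and then density of $D$ yields $p \leq p'$ in $D$, so $\pi(p) \leq q$. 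Meeting $D^*$ with $H$ produces $p \in D$ below $p_0$ with $\pi(p) \in H$, i.e., $p \in D \cap \mathbb P/H$ below $p_0$; $V[H]$-genericity of $G$ then gives $G \cap D \neq \emptyset$.

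I do not foresee a substantive obstacle: the argument is entirely bookkeeping around the definition of a projection and the forcing/truth lemma. The only mild subtlety is the freedom to assume $D$ is dense \emph{open} (equivalent for the purpose of meeting a filter), which one uses in step (1) to pass from $p_1 \in \dot D$ to $p_2 \in \dot D$.
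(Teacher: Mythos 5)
The paper records this lemma as a standard forcing fact and omits the proof, so there is no in-paper argument to compare against; your job here was to reconstruct a folklore proof, and you have done so correctly. The structure is exactly the textbook one: show $H$ is a $V$-generic filter via the density of $E_A$, show $G$ meets dense subsets of $\mathbb P/H$ in $V[H]$ by pulling a name $\dot D$ back to $\mathbb P$, and for (2) push a dense $D\subseteq\mathbb P$ down to a dense $D^*\subseteq\mathbb Q$ and use genericity of $H$.

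One small point worth tightening in (1): from $q_1\Vdash\check p_1\in\mathbb P/\dot H$ you infer $q_1\le\pi(p_1)$, which is not literally guaranteed unless $\mathbb Q$ is separative. What is true is that $q_1\Vdash\check{\pi(p_1)}\in\dot H$ makes $\{q':q'\le\pi(p_1)\}$ dense below $q_1$, so you may \emph{replace} $q_1$ by such a $q'\le q_1$ before applying the projection property; the rest of the paragraph then goes through unchanged. Similarly in (2), the density of $D\cap\mathbb P/H$ should be verified below an arbitrary $p_0\in\mathbb P/H$ rather than $p_0\in G$ (your argument only uses $\pi(p_0)\in H$, so this is merely a phrasing slip). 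Neither issue affects the substance; the proof is sound.
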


\begin{lemma}
 If $d : \mathbb P \to \mathbb Q$ is a dense embedding, then $G$ is $\mathbb P$-generic over $V$ iff $d[G]$ is $\mathbb Q$-generic over $V$.
\end{lemma}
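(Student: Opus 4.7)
The plan is to interpret ``$d[G]$ is $\mathbb{Q}$-generic'' as the claim that the upward closure $H := \{q \in \mathbb{Q} : (\exists p \in G)\, d(p) \leq q\}$ is a $\mathbb{Q}$-generic filter, with the inverse correspondence sending a $\mathbb{Q}$-generic $H$ back to $G := d^{-1}[H]$. First I would verify the easy preliminaries: that $H$ is a filter whenever $G$ is one (upward closure is built in; directedness follows from $G$ being directed and $d$ being order-preserving), and that $d^{-1}[H]$ is a filter whenever $H$ is (order-preservation again, plus antichain-preservation to get directedness).

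For the forward implication, assume $G$ is $\mathbb{P}$-generic and let $D \subseteq \mathbb{Q}$ be dense. I would show that $D^* := \{p \in \mathbb{P} : (\exists q \in D)\, d(p) \leq q\}$ is dense in $\mathbb{P}$: starting from any $p_0$, use density of $D$ to pick $q \leq d(p_0)$ with $q \in D$, then use density of $d[\mathbb{P}]$ to find $p_1$ with $d(p_1) \leq q$. By antichain-preservation $p_0$ and $p_1$ are compatible, and any common extension $p$ satisfies $d(p) \leq q$, so $p \in D^*$. Genericity of $G$ then yields $p \in G$ with $d(p) \leq q \in D$, and $q \in H$ by upward closure.

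For the backward implication, let $H$ be $\mathbb{Q}$-generic and set $G := d^{-1}[H]$. For dense $D \subseteq \mathbb{P}$, the image $d[D]$ is predense in $\mathbb{Q}$: given $q \in \mathbb{Q}$, density of $d[\mathbb{P}]$ provides $p$ with $d(p) \leq q$, and density of $D$ provides $p' \leq p$ with $p' \in D$, so $d(p') \leq q$. Hence the downward closure of $d[D]$ is dense, and meeting it with $H$ produces $p \in D$ and $q \in H$ with $q \leq d(p)$; since $H$ is upward closed, $d(p) \in H$, giving $p \in G \cap D$.

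The only genuinely subtle point, which I would handle at the end, is confirming that the two correspondences are mutually inverse, so that the choice of formulation is immaterial. The nontrivial equality is $H_{d^{-1}[H]} = H$: given $q \in H$, consider the set of $q' \in \mathbb{Q}$ that are either below $q$ and lie in $d[\mathbb{P}]$, or else are incompatible with $q$. Density of $d[\mathbb{P}]$ makes this dense in $\mathbb{Q}$, and since $q \in H$ no element incompatible with $q$ can be in $H$, so $H$ meets the first part, producing $p \in d^{-1}[H]$ with $d(p) \leq q$, whence $q \in H_{d^{-1}[H]}$ by upward closure.
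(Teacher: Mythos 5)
The paper states this as one of a few well-known forcing facts and gives no proof, so there is no in-paper argument to compare against. Your proof is the standard one, and the forward direction (pulling dense subsets of $\mathbb Q$ back along $d$, using density of $d[\mathbb P]$ and antichain-preservation), the backward direction (pushing dense subsets of $\mathbb P$ forward to predense subsets of $\mathbb Q$), and the verification that $H_{d^{-1}[H]}=H$ for $\mathbb Q$-generic $H$ are all correct.

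The one point to tighten is the closing claim that $H_{d^{-1}[H]}=H$ is the only nontrivial inversion. To get the literal ``only if'' direction of the lemma from your backward argument you also need $d^{-1}[H_G]=G$, where $H_G$ denotes the upward closure of $d[G]$; the inclusion $G\subseteq d^{-1}[H_G]$ is automatic, but the reverse is not, and for a non-separative $\mathbb P$ one can even have a filter $G$ with $H_G$ generic while $G$ is not (take $\mathbb P=\{1,a,b,c\}$ with $c<a<1$ and $c<b<1$, $d$ the map to the one-point separative quotient $\mathbb Q$, and $G=\{1,a\}$: then $H_G=\mathbb Q$ is vacuously generic, but $G$ misses the dense set $\{c\}$). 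The standard resolutions are either to build separativity into the hypotheses, in which case $d(p')\le d(p)$ forces $p'\le p$ and the reverse inclusion is immediate, or to prove $d^{-1}[H_G]=G$ from the genericity of $G$: if $p\in d^{-1}[H_G]\setminus G$ with witness $p'\in G$, $d(p')\le d(p)$, then genericity of $G$ supplies $p''\in G$ incompatible with $p$, and a common refinement of $p',p''$ in $G$ maps under $d$ below $d(p)$ yet, by antichain-preservation, incompatibly with $d(p)$, a contradiction. Every poset to which this lemma is applied in the paper is separative, so the slip is harmless in context, but it is a step that requires an argument rather than being the trivial half of the inversion.
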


For a set of ordinals $X$ and an ordinal $\alpha$, we use $[X]^\alpha$ to denote the collection of subsets of $X$ of ordertype $\alpha$.
Suppose $V \subseteq W$ are models of set theory.  We define the following 
$\Sigma_1$ statement about parameters in $V$:
$$\spl(\mu,\kappa,V):  \left( \exists A \in [\kappa]^\kappa \right) (\forall x \in [\kappa]^\mu \cap V)
(\forall \alpha < \mu)
(\exists y \in [x]^\alpha \cap V) 
 y \cap A = \emptyset.$$
Informally, this says that there is a large subset of $\kappa$ in $W$ which 
splits every $\mu$-sized set from $V$ by excluding arbitrarily large pieces of it from $V$.
In cases where the inner model in question is clear from context,
we will drop the third parameter and just write $\spl(\mu,\kappa)$.
Clearly, if $V$ and $W$ have the same $[\kappa]^\mu$, then $W \models \neg \spl(\mu,\kappa,V)$.

\begin{lemma}
\label{pos}
Suppose $\mu<\kappa$ are regular.
Then $\col(\mu,{<}\kappa)$ forces $\spl(\mu,\kappa)$.
\end{lemma}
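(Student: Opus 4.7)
My plan is to read off $A$ from a single designated coordinate of the Levy generic. Writing $\col(\mu,{<}\kappa)$ in the standard form, with conditions $p$ satisfying $\dom(p) \subseteq \kappa \times \mu$, $|\dom(p)| < \mu$, and $p(\beta,\xi) < \beta$ for $(\beta,\xi) \in \dom(p)$, I will let $\dot A$ name the set $\{\beta \in [\mu,\kappa) : G(\beta,0) = 0\}$, where $G$ denotes the union of the generic filter. The guiding intuition is a size mismatch: each condition $p$ constrains only $<\mu$ many of the bits $G(\beta,0)$, whereas every $x \in [\kappa]^\mu \cap V$ we must split has size $\mu$, so we always have room to legislate $\dot A$ on a small subset of $x$.

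Two density arguments then finish the job. First, for $|A| = \kappa$: given any $\gamma < \kappa$ and any condition $p$, pick $\beta \in [\gamma,\kappa)$ with $(\beta,0) \notin \dom(p)$ and extend $p$ by declaring $p(\beta,0) = 0$, forcing $\beta \in \dot A$. Thus $A$ is unbounded in $\kappa$, and since $\kappa$ remains a regular cardinal in $V[G]$, $|A|=\kappa$. Second, for splitting: fix $x \in [\kappa]^\mu \cap V$, $\alpha < \mu$, and a condition $p$. Using the regularity of $\mu$ and the $<\mu$ bound on $\dom(p)$, pick $y \subseteq x \cap [\mu,\kappa)$ in $V$ of size $\alpha$ such that $(\beta,0) \notin \dom(p)$ for every $\beta \in y$. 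Extend $p$ to $q$ by declaring $q(\beta,0) = 1$ for each $\beta \in y$; this forces $y \cap \dot A = \emptyset$. Hence the set of conditions deciding such a witness $y$ for a given $(x,\alpha)$ is dense, and a generic meets all such sets as $(x,\alpha)$ varies over $V$.

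I do not anticipate a serious obstacle, since the argument reduces to a small-support counting trick pitting $|\dom(p)|<\mu$ against $|x|=\mu$. The one cosmetic point is to restrict the coding sites to $\beta \geq \mu$, so that the alternative value $1$ is legally available at $(\beta,0)$ and we can indeed force non-membership in $\dot A$.
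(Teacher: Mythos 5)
Your proposal is correct and follows essentially the same route as the paper: both read off $A$ from a single designated coordinate of the generic collapse (there $\{\alpha:\bigcup G(\alpha,0)=0\}$) and run a density argument pitting $|\dom(p)|<\mu$ against $\ot(x)=\mu$ to find room for a $y\in[x]^\alpha$ whose coordinates can all be set to a nonzero value. Your extra care in verifying $|A|=\kappa$ and in restricting coding sites to $\beta\geq\mu$ is sensible; just note that if $\ot(x\cap[\mu,\kappa))<\alpha$ you cannot literally pick $y\subseteq x\cap[\mu,\kappa)$, but then $\ot(x\cap\mu)=\mu$ and any $y\in[x\cap\mu]^\alpha$ already satisfies $y\cap A=\emptyset$ since $A\subseteq[\mu,\kappa)$, so the density argument still goes through.
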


\begin{proof}
Let $G \subseteq \col(\mu,{<}\kappa)$ be generic.  In $V[G]$, define $A = \{ 
\alpha < \kappa : \bigcup G(\alpha,0)= 0 \}$.  Let $p \in 
\col(\mu,{<}\kappa)$, $\alpha < \mu$, and $x \in [\kappa]^\mu \cap V$ be 
arbitrary.  Since $|p| < \mu$, there is $y \subseteq x 
\setminus \supp(p)$ of ordertype $\alpha$. We can construct $q \leq p$ such 
that for all $\beta \in y$, $q(\beta,0) \not= 0$.  Therefore, the set of conditions forcing witnesses to 
$\spl(\mu,\kappa)$ is dense, so the desired statement is forced.
\end{proof}

\begin{lemma}
\label{negs}
Suppose $\nu < \mu < \kappa$ are regular and $\alpha^{<\nu} < \kappa$ for all 
$\alpha < \kappa$.  Then:
\begin{enumerate}
\item $\Vdash_{\col(\nu,{<}\kappa)} \neg \spl(\mu,\kappa)$.
\item $\Vdash_{\col(\mu,{<}\kappa)} \neg \spl(\nu,\kappa)$.
\end{enumerate}
\end{lemma}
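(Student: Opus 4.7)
The plan is to prove the two parts separately: part~(2) admits a quick argument from $\mu$-closure, whereas part~(1) requires combining $\kappa$-c.c.\ covering with the collapsed cofinality of $\mu$.

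For part~(2), let $G \subseteq \col(\mu,{<}\kappa)$ be generic. I would use that $\col(\mu,{<}\kappa)$ is $\mu$-closed and $\nu<\mu$, so no new $\nu$-sequences of ordinals appear. Given any $A \in V[G]$ with $\ot(A)=\kappa$, enumerating the first $\nu$ elements of $A$ in increasing order produces a $\nu$-sequence that already lies in $V$; the corresponding set $x \in [\kappa]^\nu \cap V$ satisfies $x \subseteq A$, so with $\alpha = 1$ every singleton $y \in [x]^1 \cap V$ is contained in $A$, witnessing $\neg\spl(\nu,\kappa)$ for $A$.

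For part~(1), let $G \subseteq \col(\nu,{<}\kappa)$ be generic. The forcing is only $\nu$-closed, so a $V$-subset of $A$ of ordertype $\mu$ is not directly available, and I would argue by contradiction, assuming $A \in V[G]$ witnesses $\spl(\mu,\kappa)$. The hypothesis $\alpha^{<\nu}<\kappa$ makes $\col(\nu,{<}\kappa)$ $\kappa$-c.c., so covering holds for $\nu$-sequences of ground-model elements, and the forcing collapses $\mu$ so that $\cf^{V[G]}(\mu)=\nu$. I would fix a cofinal sequence $\la \mu_\eta : \eta < \nu \ra \in V[G]$ in $\mu$ of $V$-cardinals and apply the $\spl$-witness at $x=\mu$ with $\alpha = \mu_\eta$ to extract $y_\eta \in [\mu]^{\mu_\eta} \cap V$ with $y_\eta \cap A = \emptyset$ for each $\eta<\nu$. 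By covering, one can fix $g \in V$ with $y_\eta \in g(\eta)$ and $|g(\eta)|^V < \kappa$, and set $Z = \bigcup_{\eta<\nu}\bigcup g(\eta) \in V$; then $Z \subseteq \mu$ and $|Z|^V = \mu$, since $Z \supseteq y_\eta$ of $V$-size $\mu_\eta$ for each $\eta$ and $\sup_\eta \mu_\eta = \mu$.

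The hardest step, and the main obstacle, will be converting $Z$ into an explicit violation of $\spl(\mu,\kappa)$. The plan is to iterate the construction with $x = Z$ in place of $x=\mu$, building a descending $V$-chain of subsets of $\mu$ of $V$-size $\mu$, and then to combine this with a $\kappa$-c.c.\ name-analysis of $\dot A$: the covering traps every $V$-subset of $\mu$ forced disjoint from $\dot A$ inside a fixed small $V$-pool. Combined with $|\dot A|^{V[G]} = \kappa$, which forces $\dot A$ to intersect sufficiently many members of such a pool, a pigeonhole should isolate $\alpha<\mu$ and $x \in V$ of ordertype $\mu$ on which every $V$-subset of $x$ of ordertype $\alpha$ must meet $A$, contradicting that $A$ witnesses $\spl$.
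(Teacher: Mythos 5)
Your argument for part~(2) is correct and is exactly what the paper has in mind: since $\col(\mu,{<}\kappa)$ is $\mu$-closed, $[\kappa]^\nu$ is unchanged, so given any $A$ of ordertype $\kappa$ in $V[G]$ one takes $x$ to be its first $\nu$ elements (a set in $V$), and then no nonempty $y \subseteq x$ is disjoint from $A$.

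For part~(1), however, there is a genuine gap, and you acknowledge it yourself (``the hardest step, and the main obstacle''). Your proposed route --- push covering down through a cofinal $\nu$-sequence in $\mu$, aggregate the candidate $y$'s into a pool $Z \in V$, iterate with $Z$ in place of $\mu$, and hope a pigeonhole closes the loop --- is not carried out, and it is unclear how to make it converge: each iteration only replaces one $x \in [\kappa]^\mu \cap V$ by another, while the task is to exhibit a \emph{single} $x \in V$ and $\alpha < \mu$ for which \emph{every} $V$-set $y \in [x]^\alpha$ meets $A$, and nothing in the sketch forces the candidate sets to stabilize. A further structural problem is that your covering function $g$ is chosen after fixing a particular generic $G$ and a particular $A \in V[G]$, so it lives in $V$ only via a name analysis, which the sketch never actually performs. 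The paper instead argues directly with a name: let $\dot A$ name a set in $[\kappa]^\kappa$, suppose $q$ forces that every $x \in [\kappa]^\mu \cap V$ has some $y \in [x]^\nu \cap V$ disjoint from $\dot A$, choose for unboundedly many $\alpha < \kappa$ a condition $p_\alpha \leq q$ with $p_\alpha \Vdash \check\alpha \in \dot A$, run a $\Delta$-system over $\kappa$-many of them with root $r \leq q$, and take $x$ of size $\mu$ inside the resulting index set. Then for any $s \leq r$ deciding a candidate $y \in [x]^\nu \cap V$ to be disjoint from $\dot A$, the fact that $|\supp(s)| < \nu = |y|$ guarantees some $\alpha \in y$ with $p_\alpha$ compatible with $s$, giving an immediate contradiction. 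This is shorter, works uniformly over conditions rather than in a fixed extension, and is the intended use of the $\kappa$-c.c.\ hypothesis $\alpha^{<\nu} < \kappa$; I'd encourage you to redo part~(1) along these lines.
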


\begin{proof}
(2) holds since $\col(\mu,{<}\kappa)$ does not change $[\kappa]^\nu$.
For (1), let $\dot A$ be a $\col(\nu,{<}\kappa)$-name for a set in $[\kappa]^\kappa$. 
 Suppose $q \in \col(\nu,{<}\kappa)$ forces that for all $x \in [\kappa]^\mu \cap V$, 
there is $y \in [x]^\nu \cap V$
that is disjoint from $\dot A$.  Let $B$ be the set of $\alpha < \kappa$ such that there is some $p_\alpha 
\leq q$ such that $p_\alpha \Vdash \check \alpha \in \dot A$.  Let $C \in 
[\kappa]^{\kappa}$ be such that $\{ p_\alpha : \alpha \in C \}$ forms a 
$\Delta$-system with root $r \leq q$. Let $x \in [C]^\mu$, and suppose $s \leq 
r$ decides some $y \in [x]^\nu$ to be disjoint from 
$\dot A$.
Since $|\supp(s)| < \nu$, 
there is some $\alpha \in y$ such that $p_\alpha$ is compatible with $s$, so $s$ 
does not force that $\check y \cap \dot A = \emptyset$, a contradiction.  Thus, 
$\neg \spl(\mu,\kappa)$ is forced.
\end{proof}

Recall that a set of ordinals $X$ is Easton when for all regular cardinals $\kappa$, \break$\sup(\kappa \cap X )< \kappa$.  Below, a superscript $E$ above a product will indicate that we take all partial functions with Easton support.

\begin{lemma}
\label{holes}
 Suppose $\kappa$ is Mahlo.  Let $X \subseteq \kappa$ be a set of regular cardinals such that for some regular $\mu < \kappa$,
 $\mu^+ \notin X$.
 Then the partial order
 $$\mathbb P = \prod^E_{\alpha \in X} \col(\alpha,{<}\kappa)$$
 is $\kappa$-c.c.\ and forces $\neg \spl(\mu^+,\kappa)$.
\end{lemma}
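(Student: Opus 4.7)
My plan is first to dispense with the $\kappa$-c.c., which follows from the standard $\Delta$-system argument for Easton products of L\'evy collapses below a Mahlo cardinal. The substantive claim is $\neg\spl(\mu^+,\kappa)$.

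Since $\mu^+ \notin X$, I factor $\mathbb P \cong \mathbb P_0 \times \mathbb P_1$, where $\mathbb P_0 = \prod^E_{\alpha \in X,\, \alpha \leq \mu} \col(\alpha,{<}\kappa)$ and $\mathbb P_1 = \prod^E_{\alpha \in X,\, \alpha \geq \mu^{++}} \col(\alpha,{<}\kappa)$. The tail $\mathbb P_1$ is $\mu^{++}$-closed, hence adds no new subsets of $\kappa$ of size $\leq \mu^+$. Setting $W = V^{\mathbb P_1}$, we have $[\kappa]^{\mu^+} \cap W = [\kappa]^{\mu^+} \cap V$, so $\neg\spl(\mu^+,\kappa,V)$ in $V^{\mathbb P}$ reduces to $\neg\spl(\mu^+,\kappa,W)$ holding after forcing with $\mathbb P_0$ over $W$. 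The $\kappa$-c.c.\ of $\mathbb P_0$ persists in $W$: a hypothetical $W$-antichain of size $\kappa$ in $\mathbb P_0$, paired with $\mathbb P_1$-conditions deciding each entry, would yield a $V$-antichain of size $\kappa$ in $\mathbb P$.

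Working in $W$, I would mirror the proof of Lemma~\ref{negs}(1). Suppose $q \Vdash \dot A$ witnesses $\spl$, and pick $p_\alpha \leq q$ with $p_\alpha \Vdash \check\alpha \in \dot A$ for each $\alpha$ in some $B \in [\kappa]^\kappa$. By the $\kappa$-c.c., thin to $C \in [B]^\kappa$ so that the $p_\alpha$'s form a two-layer $\Delta$-system: their supports share a root $R_0$, and for each $\gamma \in R_0$ the values $p_\alpha(\gamma)$ form a $\Delta$-system in $\col(\gamma,{<}\kappa)$ with agreeing root functions. This yields a common root condition $r \leq q$ with $p_\alpha \leq r$ for all $\alpha \in C$. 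Since $r \Vdash \spl$, for any $x \in [C]^{\mu^+} \cap V$ there exist $s \leq r$ and $y \in [x]^\mu \cap V$ with $s \Vdash \check y \cap \dot A = \emptyset$.

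The decisive size bound is $|s|_{\mathrm{at}} := \sum_{\gamma \in \supp(s)} |s(\gamma)| < \mu$. Indeed the Easton condition gives $\sup(\supp(s) \cap \mu) < \mu$, and since $\supp(s) \subseteq X \cap [0,\mu]$ we obtain $|\supp(s)| < \mu$. Combined with $|s(\gamma)| < \gamma \leq \mu$ and regularity of $\mu$, this yields $|s|_{\mathrm{at}} < \mu$. Since $s \leq r$, any incompatibility $s \perp p_\alpha$ ($\alpha \in C$) must occur on a non-root atom, and the $\Delta$-system guarantees each non-root atom of $s$ is shared with at most one $\alpha$. Thus $|\{\alpha \in C : s \perp p_\alpha\}| \leq |s|_{\mathrm{at}} < \mu = |y|$, so some $\alpha \in y$ is compatible with $s$, and a common extension of $s$ and $p_\alpha$ simultaneously forces $\check\alpha \in \dot A$ and $\check\alpha \notin \dot A$---contradiction. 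The main obstacle is arranging the two-layer $\Delta$-system cleanly enough that $r$ is a genuine element of $\mathbb P_0$ satisfying $p_\alpha \leq r$ for all $\alpha \in C$.
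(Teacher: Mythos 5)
Your proof is correct and takes essentially the same route as the paper's: factor $\mathbb P$ at $\mu^+$ into $\mathbb P_0 \times \mathbb P_1$, observe that the $\mu^{++}$-closed tail $\mathbb P_1$ does not change $[\kappa]^{\mu^+}$, and then run the $\Delta$-system/counting argument from Lemma~\ref{negs}(1) on $\mathbb P_0$ over $V^{\mathbb P_1}$, using that conditions in $\mathbb P_0$ have size $<\mu$. Your ``two-layer $\Delta$-system'' and the explicit atom-counting bound $|s|_{\mathrm{at}}<\mu$ are just more careful renderings of steps the paper summarizes with ``forms a $\Delta$-system with root $r$ as above'' and ``since $|s|<\mu$.''
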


\begin{proof}
We establish the $\kappa$-c.c.\ using $\Delta$-systems.
Let $\{ p_\alpha : \alpha < \kappa \} \subseteq \mathbb P$.
For each $\alpha < \kappa$, there is $\beta < \kappa$ such that
$$p_\alpha \in \mathbb P \restriction \beta := \prod^E_{\gamma \in X \cap \beta} \col(\gamma,{<}\beta).$$
Let $\beta_\alpha$ denote the least such $\beta$, and 
let $C \subseteq \kappa$ be a club closed under the function $\alpha \mapsto \beta_\alpha$.
Note that for all $\alpha < \kappa$ and for all regular $\beta < \kappa$, there is $\gamma < \beta$ such that
$p_\alpha \restriction \beta \in \mathbb P \restriction \gamma$.
Since $\kappa$ is Mahlo, there is a $\gamma^* < \kappa$ and a stationary $S \subseteq C$ such that
for all $\alpha \in S$, $p_\alpha \restriction \alpha \in \mathbb P \restriction \gamma^*$.
Since $|\mathbb P \restriction \gamma^*| < \kappa$, there is a stationary $S' \subseteq S$ 
and $p^* \in \mathbb P \restriction \gamma^*$ such that $p_\alpha \restriction \alpha = p^*$ for all
$\alpha \in S'$.  If $\alpha < \beta$ are in $S'$, then $p_\beta \restriction \dom p_\alpha = p^*$,
so $p_\alpha$ and $p_\beta$ are compatible.

Now note that
$$\mathbb P \cong \prod^E_{\alpha \in [0,\mu] \cap X} \col(\alpha,{<}\kappa)
\times \prod^E_{\alpha \in [\mu^{++},\kappa) \cap X} \col(\alpha,{<}\kappa)
:= \mathbb P_0 \times \mathbb P_1,$$
and $\mathbb P_1$ is $\mu^{++}$-closed.
Let $G_1 \subseteq \mathbb P_1$ be generic and work in $V[G_1]$.  Suppose $q \Vdash^{V[G_1]}_{\mathbb P_0} \dot A \in [\kappa]^\kappa$.  When possible, let $p_\alpha \leq q$ be such that $p_\alpha \Vdash \alpha \in \dot A$.
Let $\la \alpha_i : i < \kappa \ra$ enumerate the set of $\alpha$ for which $p_\alpha$ is defined.
Since each condition in $\mathbb P_0$ has size $<\mu$, 
we can find a stationary $S \subseteq \kappa \cap \cof(\mu)$ such that $\{ p_{\alpha_i} : i \in S \}$ forms a $\Delta$-system with root $r \leq q$ as above. 
As in the proof of Lemma \ref{negs}, for every $y \in [S]^\mu$ and every $s \leq r$,
$s \nVdash \{\alpha_i : i \in y \} \cap \dot A = \emptyset$, since $|s| <\mu$.
This shows that $\mathbb P_0$ forces $\neg \spl(\mu^+,\kappa,V[G_1])$ over $V[G_1]$.
Since $([\kappa]^{\mu^+})^V = ([\kappa]^{\mu^+})^{V[G_1]}$, $\mathbb P$ forces $\neg \spl(\mu^+,\kappa,V)$.
\end{proof}

Suppose $\kappa$ is Mahlo and $\mu < \kappa$ is regular.  We define the rigid collapse $\rcol$ of $\kappa$ to $\mu^+$ as a projection of a product of L\'evy collapses.  Let $\mathrm{Reg}$ denote the class of regular cardinals.
Let $$\mathbb P = \prod^E_{\alpha \in [\mu,\kappa) \cap \mathrm{Reg}} \col(\alpha,{<}\kappa).$$
$\mathbb P$ can be viewed as the set of partial functions $p : \kappa^3 \to \kappa$ such that:
\begin{enumerate}
\item $\{ \alpha : (\exists \beta)( \exists \gamma) (\alpha,\beta,\gamma) \in \dom p \}$ is an Easton set of regular cardinals contained in $[\mu,\kappa)$.
\item $(\forall \alpha) |\{ \beta : (\exists \gamma) (\alpha,\beta,\gamma) \in \dom p \} | < \alpha$.
\item $(\forall \alpha)( \forall \beta) \{ \gamma : (\alpha,\beta,\gamma) \in \dom p \} \in [\alpha]^{<\alpha}$.
\item $(\forall \alpha)( \forall \beta )(\forall \gamma) p(\alpha,\beta,\gamma) <\beta$.
\end{enumerate}
Enumerate quadruples of ordinals by putting $\vec \alpha < \vec \beta$ when $\max \vec \alpha < \max \vec \beta$, or if not, $\vec \alpha$ is lexicographically less than $\vec \beta$.  For every infinite cardinal $\alpha$, this enumeration has ordertype $\alpha$.  Let $f : \kappa \to \kappa^4$ be the restriction of this enumeration to $\kappa$.
A generic filter $G$ for $\mathbb P$, or any suborder, is determined by the collection
$$\{ (\alpha,\beta,\gamma,\delta) : (\exists p \in G)p(\alpha,\beta,\gamma) = \delta \},$$
and thus by a subset of $\kappa$ via $f$.

We want to define $\rcol$ so that it absorbs other versions itself where we alter the choice of the cardinal $\mu$ but keep $\kappa$ the same.  In order to arrange this, we divide the regular cardinals below $\kappa$ into countably many pieces as follows.  
Let $A_0$ be the set:
\begin{align*}
  \{ \alpha \in [\mu,\kappa) :& \text{ } \alpha = \mu, \text{ or }\alpha\text{ is inaccessible, or }
  \alpha = \beta^{+n} 
  \text{ for some singular cardinal }\beta \\
 &\text{ of cofinality}>\mu\text{ and some finite }n > 0 \} \times \kappa \times \kappa.
\end{align*}
For $n > 0$, let $A_n$ be the set:
$$\{ \alpha \in [\mu,\kappa) : \alpha = \beta^{+n+1}, \text{ for some singular cardinal } \beta \text{ of cofinality } \mu \} \times \kappa \times \kappa.$$
We will inductively define iterations $\mathbb P_0 * \dots * \mathbb P_n$ which are the images of commuting projections from the respective $\mathbb P \restriction \bigcup_{m\leq n} A_m$.  Let $\mathbb P_0 = \mathbb P \restriction A_0$.

Let $\la \alpha_i : i < \kappa \ra$ enumerate the singular cardinals of cofinality $\mu$ in $(\mu,\kappa)$ in increasing order.
Suppose $G_0 \subseteq \mathbb P_0$ is generic over $V$.  Let $X_0$ be the subset of $\kappa$ that codes $G_0$ via $f$ as above.  Let 
$B_1 = \{  \alpha_i^{++} : i \in X_0 \} \times \kappa^2 \subseteq A_1$, and let $\mathbb P_1 = \mathbb P \restriction B_1$.

\begin{claim}
\label{proj}
In $V[G_0]$, the map $p \mapsto p \restriction B_1$ is a projection from $\mathbb P$ to $\mathbb P_1$.\
\end{claim}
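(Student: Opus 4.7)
The plan is to verify the three defining properties of a projection for the restriction map $\pi : \mathbb{P} \to \mathbb{P}_1$ given by $\pi(p) = p \restriction B_1$, working throughout in $V[G_0]$. Two of these are immediate: $\pi$ sends the empty condition (the top of $\mathbb{P}$) to the empty condition (the top of $\mathbb{P}_1$), and since the order on $\mathbb{P}$ is reverse inclusion of partial functions, restriction is order-preserving.

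The main step is the lifting property. Given $p \in \mathbb{P}$ and $q \in \mathbb{P}_1$ with $q \leq p \restriction B_1$, I would define $p' = p \cup q$. Because $\dom(q) \subseteq B_1$ and $q$ extends $p \restriction B_1$ as a function, $p$ and $q$ agree wherever their domains overlap, so $p'$ is a well-defined partial function from a subset of $\kappa^3$ to $\kappa$. Then $p' \supseteq p$ gives $p' \leq p$, and
\[
p' \restriction B_1 = (p \restriction B_1) \cup q = q,
\]
since $q$ already extends $p \restriction B_1$, so $\pi(p') = q \leq q$ as required.

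The only real content is verifying $p' \in \mathbb{P}$, namely that the union $\dom(p) \cup \dom(q)$ still satisfies the support and size conditions (1)--(4). This is routine: the union of two Easton sets of regular cardinals in $[\mu,\kappa)$ is again Easton, and for each regular $\alpha$ the union of two sets of size less than $\alpha$ has size less than $\alpha$. I do not anticipate any serious obstacle. One small subtlety worth flagging is that $B_1 \in V[G_0] \setminus V$, so the operation $p \restriction B_1$ is performed in $V[G_0]$ and $\mathbb{P}_1$ is naturally viewed as a poset there; but the combinatorial properties defining membership in $\mathbb{P}$ are absolute, so forming $p \cup q$ inside $V[G_0]$ and checking (1)--(4) presents no difficulty.
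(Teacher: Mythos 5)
Your argument is correct and is essentially the same as the paper's: you verify the lifting property by taking the union $p \cup q$, noting that compatibility on overlapping domains is automatic because $\dom q \subseteq B_1$ and $q$ extends $p \restriction B_1$, and checking that the result remains a condition. The paper phrases this by first writing $q = p' \restriction B_1$ for some $p' \in \mathbb P$ agreeing with $p$ on $B_1 \cap \dom p$ and then forming $p \cup (p' \restriction (\dom p' \setminus \dom p))$, which produces the same object; your presentation, working directly with $q$, is slightly more streamlined but carries the same idea.
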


\begin{proof}
 Suppose $q \leq p \restriction B_1$.  Then $q = p' \restriction B_1$ for some $p'$ such that $p'(\alpha,\beta,\gamma) = p(\alpha,\beta,\gamma)$ whenever $(\alpha,\beta,\gamma) \in B_1 \cap \dom p$.
 Let $d = \dom p' \setminus \dom p$, and define $p'' = p \cup (p' \restriction d)$.
 Then $p'' \leq p$, and $p'' \restriction B_1 = q$.
\end{proof}

\begin{claim}
 If $i \notin X_0$, then $\Vdash^{V[G_0]}_{\mathbb P_1} \neg \spl(\alpha_i^{++},\kappa,V)$.
\end{claim}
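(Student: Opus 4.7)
The plan is to reduce the claim to Lemma \ref{holes} by embedding $\mathbb P_1$ into a ground-model product of L\'evy collapses that omits the coordinate $\alpha_i^{++}$.

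Let $A_1^* = A_1 \setminus (\{\alpha_i^{++}\} \times \kappa \times \kappa) \in V$, and consider the forcing $\mathbb Q = \mathbb P \restriction (A_0 \cup A_1^*) = \mathbb P_0 \times \mathbb P \restriction A_1^*$ in $V$. The set of first coordinates of $A_0 \cup A_1^*$ consists of $\mu$, the inaccessibles in $[\mu, \kappa)$, successors $\beta^{+n}$ of singulars $\beta$ of cofinality $>\mu$, and $\alpha_j^{++}$ for $j \neq i$; I would check that none of these can equal $\alpha_i^{++}$. The only case requiring real analysis is $\alpha_i^{++} = \beta^{+n}$: since $\alpha_i$ is a limit cardinal we have $n \leq 2$, but $n = 1$ gives $\beta = \alpha_i^+$ (regular) and $n = 2$ gives $\beta = \alpha_i$ (cofinality $\mu$, not $>\mu$). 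Lemma \ref{holes} applied with the regular cardinal $\alpha_i^+$ then shows that $\mathbb Q$ forces $\neg \spl(\alpha_i^{++}, \kappa, V)$ over $V$.

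Now fix $G_0 \subseteq \mathbb P_0$ generic with $i \notin X_0$. Then $B_1 \subseteq A_1^*$, so by the proof of Claim \ref{proj} the map $p \mapsto p \restriction B_1$ is a projection $\pi : \mathbb P \restriction A_1^* \to \mathbb P_1$ in $V[G_0]$. Suppose for contradiction that $G_1 \subseteq \mathbb P_1$ is generic over $V[G_0]$ and that some $A \in V[G_0][G_1]$ witnesses $\spl(\alpha_i^{++}, \kappa, V)$. Passing to an outer universe, force with the quotient $(\mathbb P \restriction A_1^*)/G_1$ over $V[G_0][G_1]$ to obtain $H \subseteq \mathbb P \restriction A_1^*$ generic over $V[G_0]$ with $\pi[H] = G_1$. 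Then $V[G_0][H] \supseteq V[G_0][G_1]$ is a $\mathbb Q$-generic extension of $V$, and $A \in V[G_0][H]$ still witnesses $\spl(\alpha_i^{++}, \kappa, V)$ there, since the splitting condition quantifies only over sets in $V$. This contradicts the conclusion of Lemma \ref{holes} applied to $\mathbb Q$.

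The main obstacle is the projection step: one must verify that the ground-model forcing $\mathbb P \restriction A_1^* \in V$ projects onto the extension-model forcing $\mathbb P_1 \in V[G_0]$, and the crucial point is that this works precisely when $i \notin X_0$, since that is exactly when $B_1$ omits the coordinate $\alpha_i^{++}$ and hence is contained in $A_1^*$. The combinatorial content is the verification that $\alpha_i^{++}$ appears as a first coordinate of $A_0 \cup A_1$ only in the single slot we excluded, which is forced by $\alpha_i$ being singular of cofinality precisely $\mu$ rather than some larger regular cardinal.
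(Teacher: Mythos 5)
Your proof is correct and follows essentially the same strategy as the paper: apply Lemma \ref{holes} to a ground-model Easton product that omits the coordinate $\alpha_i^{++}$, use the projection from Claim \ref{proj} (valid here because $B_1 \subseteq A_1^*$ exactly when $i \notin X_0$), and invoke downward absoluteness of $\neg\spl(\alpha_i^{++},\kappa,V)$ (which you phrase equivalently as upward persistence of a witness to $\spl$). The only variation is that you apply Lemma \ref{holes} to the tighter product $\mathbb Q = \mathbb P \restriction (A_0 \cup A_1^*)$, which obliges you to carry out the case analysis ruling out $\alpha_i^{++}$ among the first coordinates of $A_0$ (and your check is correct), whereas the paper applies it to the broader $\mathbb P \restriction (\kappa \setminus \{\alpha_i^{++}\}) \times \kappa^2$, making that verification immediate at the cost of one more projection step.
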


\begin{proof}
If $i \notin X_0$, then the same argument for the previous claim shows that in $V[G_0]$, there is a projection
 $\pi : \mathbb P \restriction A_1 \setminus (\{ \alpha_i^{++} \} \times \kappa^2) \to \mathbb P_1.$
Lemma \ref{holes} implies that any generic extension by $\mathbb P \restriction (\kappa \setminus \{ \alpha_i^{++} \}) \times \kappa^2$ satisfies $\neg \spl(\alpha_i^{++},\kappa,V)$.
 If $G_1$ is $\mathbb P_1$-generic over $V[G_0]$, then a further forcing gives a generic $H \subseteq \mathbb P \restriction (\kappa \setminus 
 \{\alpha_i^{++} \}) \times \kappa^2$ over $V$.
 Since $\spl(\alpha_i^{++},\kappa,V)$ is a $\Sigma_1$ property of parameters from $V$, $\neg \spl(\alpha_i^{++},\kappa,V)$ holds in $V[G_0][G_1]$.
\end{proof}

\begin{claim}
 Whenever $G_0 * G_1$ is $\mathbb P_0 * \mathbb P_1$-generic over $V$, and $G_0' * G_1' \in V[G_0 * G_1]$ is also 
 $\mathbb P_0 * \mathbb P_1$-generic over $V$, then $G_0 = G_0'$.
\end{claim}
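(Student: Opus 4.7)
The plan is to characterize $X_0$ within $V[G_0 * G_1]$ via the splitting property $\spl(\alpha_i^{++},\kappa,V)$, which will yield the inclusion $X_0' \subseteq X_0$; genericity of $\mathbb P_0$ over $V$ will then upgrade this into equality of the generic filters themselves.

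First I claim that $i \in X_0 \iff V[G_0 * G_1] \models \spl(\alpha_i^{++},\kappa,V)$. The direction $(\Leftarrow)$ is the contrapositive of the previous claim. For $(\Rightarrow)$, fix $i \in X_0$; then $\col(\alpha_i^{++},{<}\kappa)$ is a direct product factor of $\mathbb P_1$ in $V[G_0]$, so one may factor $G_1 = H_i \times G_1^{-i}$ where $H_i$ is $\col(\alpha_i^{++},{<}\kappa)$-generic over $V[G_0]$ and hence also over $V$. Lemma~\ref{pos} applied in $V$ then gives $V[H_i] \models \spl(\alpha_i^{++},\kappa,V)$; as $\spl$ is $\Sigma_1$ in parameters from $V$, it lifts upward to $V[G_0 * G_1]$. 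The same positive argument applies verbatim to $G_0' * G_1'$: for each $i \in X_0'$, one extracts a $V$-generic for $\col(\alpha_i^{++},{<}\kappa)$ from $G_1'$ and concludes $V[G_0' * G_1'] \models \spl(\alpha_i^{++},\kappa,V)$, which then lifts to $V[G_0 * G_1]$ via $V[G_0' * G_1'] \subseteq V[G_0 * G_1]$. By the characterization, $i \in X_0$, so $X_0' \subseteq X_0$.

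Since the coding map $f : \kappa \to \kappa^4$ is injective, $X_0' \subseteq X_0$ translates to $\bigcup G_0' \subseteq \bigcup G_0$ as partial functions on $\kappa^3$. A routine density argument shows that for every triple $(\alpha,\beta,\gamma) \in A_0$ with $1 \leq \beta < \kappa$ and $\gamma < \alpha$, the set of $p \in \mathbb P_0$ with $(\alpha,\beta,\gamma) \in \dom p$ is dense; hence both $\bigcup G_0$ and $\bigcup G_0'$ have the same $V$-definable universal domain. Functional inclusion together with equal domains force $\bigcup G_0 = \bigcup G_0'$, and since each generic filter on $\mathbb P_0$ is determined by its union, $G_0 = G_0'$. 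The main obstacle is precisely this last step: since $\spl$ is only upward $\Sigma_1$-absolute, the $\neg\spl$ half of the characterization for $G_0'$ does not lift from $V[G_0'*G_1']$ to $V[G_0*G_1]$, so the characterization alone yields only $X_0' \subseteq X_0$; the genericity-based argument, exploiting the fact that any $\mathbb P_0$-generic's union has a canonical $V$-definable maximal domain, is what bridges this asymmetry.
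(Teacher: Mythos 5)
Your proof is correct and essentially matches the paper's argument: both hinge on (i) Lemma~\ref{pos} giving $\spl(\alpha_i^{++},\kappa,V)$ when $i \in X_0'$, (ii) the preceding claim giving $\neg\spl(\alpha_i^{++},\kappa,V)$ when $i \notin X_0$, and (iii) the upward $\Sigma_1$-absoluteness of $\spl$ combined with $V[G_0'*G_1'] \subseteq V[G_0*G_1]$. The paper phrases it as a direct contradiction, observing that $G_0 \neq G_0'$ forces $X_0' \setminus X_0 \neq \emptyset$ (an observation that, as you correctly note, implicitly rests on the fact that both $\bigcup G_0$ and $\bigcup G_0'$ realize the same $V$-definable universal domain, so neither can properly contain the other); you instead first establish the inclusion $X_0' \subseteq X_0$ and then make the domain-equality argument explicit to upgrade to equality. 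Your worry at the end about the one-sidedness of $\Sigma_1$-absoluteness is exactly the right thing to worry about, and your resolution via the density argument is the same bridge the paper uses tacitly.
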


\begin{proof}
 Suppose otherwise.  Let $X_0$ and $X_0'$ be the subsets of $\kappa$ corresponding to $G_0$ and $G_0'$ respectively.
 There must be some ordered quadruple $(\alpha,\beta,\gamma,\delta) \in G_0' \setminus G_0$, and thus some
 $i \in X_0' \setminus X_0$.  By the definition of $\mathbb P_1$ and Lemma \ref{pos}, $V[G_0' * G_1'] \models
 \spl(\alpha_i^{++},\kappa,V)$.  But by the previous claim, $V[G_0 * G_1] \models \neg \spl(\alpha_i^{++},\kappa,V)$.
 This is impossible, as $\spl(\alpha_i^{++},\kappa,V)$ is $\Sigma_1$ in parameters from $V$, and $V[G_0' * G_1'] \subseteq V[G_0 * G_1]$.
\end{proof}

Now we simply continue this process $\omega$ many times.  Suppose that we have sequences 
$\la \mathbb P_j : j < n \ra$,
$\la X_j : j < n \ra$, and
$\la B_j : 1 \leq j \leq n \ra$,
such that for $m < n$,
\begin{enumerate}
 \item $X_m$ is a $(\mathbb P_0 * \dots *\mathbb P_{m})$-name for the subset of $\kappa$ which codes the generic $G_m$
 for $\mathbb P_m$ via $f$, and $B_{m+1}$ is  a name for $\{ \alpha_i^{+m+2} : i \in X_m \} \times \kappa^2 \subseteq A_{m+1}$.
 \item It is forced by $\mathbb P_0 * \dots *\mathbb P_{m-1}$ that $\mathbb P_m = \mathbb P \restriction B_m$, and $p \mapsto p \restriction B_m$ is a projection.
\end{enumerate}
We extend these properties to sequences of length $n+1$ using the same argument as in Claim \ref{proj}.

Now we define $\rcol$ as a limit of this sequence.  The elements of $\rcol$ are just the elements of $\mathbb P \restriction \bigcup_{n<\omega} A_n$, but their ordering is different.  We put $p \leq_{\rcol} q$ when for each $n$,
$$\la p \restriction A_0,\check p \restriction \dot B_1,\dots,\check p \restriction \dot B_n \ra \leq_{\mathbb P_0 * \dots * \mathbb P_n} \la q \restriction A_0,\check q \restriction \dot  B_1,\dots,\check q \restriction\dot B_n \ra.$$
The ordering extends the superset ordering on $\mathbb P$.
Note that this only defines a preorder since we may have distinct conditions $p,q \in \mathbb P \restriction \bigcup_{n<\omega} A_n$ such that for all $n$, 
$\la p \restriction A_0,\dots,\check p \restriction \dot B_n \ra \Vdash_{\mathbb P_0 * \dots * \mathbb P_{n}} \check p \restriction \dot B_{n+1}= \check q \restriction \dot B_{n+1}$.  As usual, we may take the quotient by the equivalence relation defined by $p \sim q$ when $p \leq q \leq p$.  Modulo this equivalence relation, we have for each $n$, $\rcol \restriction \bigcup_{m\leq n} A_m$ is isomorphic to a dense subset of $\mathbb P_0 * \dots * \mathbb P_n$.

We want to show that the identity map from $\mathbb P \restriction \bigcup_{n<\omega} A_n$ to $\rcol$ is a projection.  Suppose $q \leq_{\rcol} p$.  
As before, let $d = \dom q \setminus \dom p$, 
and define $p' = p \cup (q \restriction d)$ so that $p' \leq_{\mathbb P} p$.  
We must have $p' \restriction A_0 = q \restriction A_0 \supseteq p \restriction A_0$.
It follows by induction that for each $n>0$, 
$\la q \restriction A_0,\dots,\check q \restriction \dot B_n \ra \Vdash_{\mathbb P_0 * \dots * \mathbb P_{n}} \check p' \restriction \dot B_{n+1} = \check q \restriction \dot B_{n+1} \supseteq \check p \restriction \dot B_{n+1},$
and thus $p'$ is equivalent to $q$ in $\rcol$.

To show rigidity, suppose $G \subseteq \rcol$ is generic over $V$, and $G' \in V[G]$ is also $\rcol$-generic over $V$.
Let $n$ be least such that $G \restriction A_n \not= G' \restriction A_n$.
As above, there is $i \in X'_{n} \setminus X_{n}$.
Suppose $p \in G$ forces $i \notin X_{n}$.  Then there is a projection
$$\sigma : \mathbb P \restriction (\kappa \setminus \{ \alpha_i^{+n+2} \}) \times \kappa^2 \to \rcol \restriction p.$$
A further forcing yields a filter $H \subseteq \mathbb P \restriction (\kappa \setminus \{ \alpha_i^{+n+2} \}) \times \kappa^2$,
which is generic over $V$, and such that $V[G] \subseteq V[H]$.  By Lemma \ref{holes}, 
$V[H] \models \neg \spl(\alpha_i^{+n+2},\kappa,V)$, and so must $V[G']$.  But by the construction and Lemma \ref{pos},
$V[G'] \models \spl(\alpha_i^{+n+2},\kappa,V)$, a contradiction.  

To complete the proof of Theorem \ref{main}, we only need to show $\mu$-closure.

\begin{lemma}
\label{closure}
Any directed subset of $\rcol$ of size $<\mu$ has an infimum.
\end{lemma}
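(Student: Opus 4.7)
The plan is to exhibit the infimum of a directed set $D\subseteq\rcol$ with $|D|<\mu$ as a ``conflict-free union''. Define $r(\alpha,\beta,\gamma)=\delta$ precisely when some $p\in D$ has $p(\alpha,\beta,\gamma)=\delta$ and every $p'\in D$ with $(\alpha,\beta,\gamma)\in\dom p'$ assigns the same value $\delta$; leave $r$ undefined at coordinates where two members of $D$ disagree. Because $\mu$ is regular and $|D|<\mu$, the union of fewer than $\mu$ conditions of $\mathbb P$ still has Easton support in its first coordinate and satisfies the fiber-size bounds ($<\mu$ at coordinate $\mu$, and $<\alpha$ at each regular $\alpha>\mu$), so $r\in\mathbb P\restriction\bigcup_nA_n=\rcol$.

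The verification that $r\leq_\rcol p$ for every $p\in D$ I will carry out by induction on $n$, showing $\la r\restriction A_0,\dots,r\restriction\dot B_n\ra\leq\la p\restriction A_0,\dots,p\restriction\dot B_n\ra$ in $\mathbb P_0\ast\cdots\ast\mathbb P_n$. For the base case $n=0$, no conflicts occur in $A_0$: any common lower bound $s\in D$ for two conflicting $p,p'$ must have $s\restriction A_0\supseteq p\restriction A_0\cup p'\restriction A_0$, since the order on $\mathbb P_0$ is simply $\supseteq$ on partial functions, forcing $p$ and $p'$ to agree at the coordinate. So $r\restriction A_0\supseteq p\restriction A_0$ automatically. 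For the inductive step, if $p,p'\in D$ conflict at $(\alpha,\beta,\gamma)\in A_n$ and $s\leq_\rcol p,p'$ in $D$, then $\la s\restriction A_0,\dots,s\restriction\dot B_{n-1}\ra$ must force $(\alpha,\beta,\gamma)\notin\dot B_n$, since otherwise $s\restriction\dot B_n$ would have to agree with both conflicting values. The inductive hypothesis gives $r\leq_\rcol s$ through level $n-1$, so $r$ inherits this forced exclusion, and $r$ being undefined at $(\alpha,\beta,\gamma)$ is harmless because $p\restriction\dot B_n$ is also forced not to have this coordinate in any relevant generic. At conflict-free coordinates, $r$ agrees with $p$ by construction. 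Finally, to see $r$ is the \emph{greatest} lower bound, note that any lower bound $r'$ of $D$ must agree with each $p\in D$ at every generically-relevant coordinate, and therefore must extend $r$ in the iteration order.

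The main obstacle is the tight entanglement between the conflict analysis and the induction. The implication ``conflict in $A_n$ $\Rightarrow$ forced out of $\dot B_n$'' only delivers a statement about a specific witness $s \in D$, and converting this into a statement about $r$ requires that the induction have already produced $r\leq_\rcol s$ at all lower levels, so that $r$ can inherit the forced exclusion of $(\alpha,\beta,\gamma)$ from $\dot B_n$. Checking $r\in\rcol$ and the greatest-lower-bound clause is more routine, provided one exploits the regularity of $\mu$ at the tightest coordinate $\alpha=\mu$, where the fiber bound $<\mu$ really needs $|D|<\mu$ to survive the union.
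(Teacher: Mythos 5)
Your proposal is correct and takes essentially the same route as the paper: define the conflict-free union $r$, verify by induction on $n$ that it is a lower bound to $D$ level by level, with the key point being that any coordinate where two members of $D$ disagree is forced out of $\dot B_n$ below the stem of $r$. Where the paper packages this by passing to a generic $G$ containing the stem of $p^*$ and observing that $\{p_i\restriction B_{n+1}\}$ are then pairwise compatible (so $B_{n+1}\subseteq d$), you instead track, for each conflict coordinate, the forced exclusion coming from a common lower bound $s\in D$ — the same content, phrased a bit more laboriously.
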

\begin{proof}
  Suppose $\nu < \mu$ and $\la p_i : i < \nu \ra$ is such a set.
Let $d$ be the set of $(\alpha,\beta,\gamma) \in \bigcup_{i<\nu} \dom p_i$ such that there are no $i < j < \nu$ with $p_i(\alpha,\beta,\gamma) \not= p_j(\alpha,\beta,\gamma)$.  Let $p^* = \bigcup_{i<\nu} p_i \restriction d$.  We show by induction on $n<\omega$ that $p^* \restriction \bigcup_{m\leq n} A_m = \inf_{i<\nu}(p_i \restriction \bigcup_{m \leq n} A_m)$ in $\rcol$.  For $n = 0$, this is true since the functions $p_i \restriction A_0$ for all $i<\nu$ all agree on the points in common to their domains.

Suppose this is true for $n$.  Let $G \subseteq \mathbb P_0 * \dots * \mathbb P_{n}$ be generic with $\la p^* \restriction A_0,\dots,\check p^* \restriction \dot B_n \ra \in G$.  By induction, $\la p_i \restriction A_0,\dots,\check p_i \restriction \dot B_n \ra \in G$ for all $i< \nu$, so by directedness, $\{ p_i \restriction B_{n+1} : i < \nu \}$ is a set of pairwise compatible partial functions on $B_{n+1}$.
No two $p_i,p_j$ can disagree at a point in $B_{n+1}$,
so $B_{n+1} \subseteq d$.  Therefore $\bigcup_{i<\nu} p_i \restriction B_{n+1} = p^* \restriction B_{n+1}$, and this is evidently the greatest lower bound of the $p_i \restriction B_{n+1}$ for $i<\nu$.  As $G$ was arbitrary, the desired statement is forced at $n+1$.
\end{proof}

If $G \subseteq \rcol$ is generic, then $\mathbb P/G$ is equivalent to $\mathbb P \restriction  \kappa^3 \setminus ( A_0 \cup  \bigcup_{n \geq 1} B_n)$.
Therefore we have:
\begin{proposition}
 If $G \subseteq \rcol$ is generic, then $\mathbb P/G$ is $\mu$-directed-closed.
\end{proposition}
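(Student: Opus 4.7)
Proof proposal. The plan is to deduce the proposition from the equivalence $\mathbb P/G \equiv \mathbb P \restriction C$, where $C := \kappa^3 \setminus (A_0 \cup \bigcup_{n \geq 1} B_n)$, stated in the sentence just above. Once this is invoked, the task reduces to showing that $\mathbb P \restriction C$ is $\mu$-directed-closed in $V[G]$. Every first coordinate $\alpha$ appearing in $\dom p$ for any $p \in \mathbb P \restriction C$ is a regular cardinal $\geq \mu$, so the relevant factor forcings are $\col(\alpha,{<}\kappa)$ with $\alpha \geq \mu$, each of which is $\mu$-directed-closed; the Easton product of such forcings should therefore be $\mu$-directed-closed as well.

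Given a directed family $\la p_i : i < \nu \ra \subseteq \mathbb P \restriction C$ in $V[G]$ with $\nu < \mu$, I plan to take $p^* := \bigcup_{i<\nu} p_i$ and verify it is the infimum. Directedness implies pairwise compatibility, so distinct $p_i$'s agree wherever their domains meet, and $p^*$ is a well-defined partial function with $\dom p^* \subseteq C$. To check that $p^* \in \mathbb P$: for each regular $\lambda \in (\mu,\kappa]$, the set $\{\alpha < \lambda : (\exists \beta,\gamma)(\alpha,\beta,\gamma) \in \dom p^*\}$ has supremum $<\lambda$, since it is a union of $\nu < \mu < \lambda$ many sets whose suprema are each $<\lambda$ and $\lambda$ is regular; for each $\alpha$, the $\alpha$-fibers in $\dom p^*$ are $\nu$-unions of sets of size $<\alpha$, hence of size $<\alpha$ by regularity of $\alpha$ and $\nu < \mu \leq \alpha$. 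The inclusion $p^* \supseteq p_i$ yields $p^* \leq p_i$ in the $\mathbb P$-ordering, and any common extension $q$ contains $p^*$ as a partial function, so $p^*$ is the greatest lower bound.

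The sole subtlety is the shift from $V$ to $V[G]$: one must check that new directed families arising only in the extension still admit infima. This is purely combinatorial — the definition of a condition of $\mathbb P$ involves only size constraints absolute between $V$ and $V[G]$, and $\mu$ remains regular in $V[G]$ by Lemma \ref{closure} — so the union argument carries over verbatim. In particular, the truncated projections defining $\rcol$ play no further role once the equivalence $\mathbb P/G \equiv \mathbb P \restriction C$ is in hand.
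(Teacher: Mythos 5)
Your proposal is correct and follows the same route as the paper: invoke the stated equivalence $\mathbb P/G \equiv \mathbb P \restriction (\kappa^3 \setminus (A_0 \cup \bigcup_{n\geq 1} B_n))$ and then observe that this Easton product of L\'evy collapses with first coordinate $\geq \mu$ is $\mu$-directed-closed, taking the union of a small directed family as the infimum. The one phrase worth tightening is the closing justification: the cleaner reason the argument transfers to $V[G]$ is that $\rcol$ is $\mu$-distributive (by Lemma \ref{closure}), so any $\nu$-sequence of conditions for $\nu<\mu$ already lies in $V$, making the union a $V$-condition and letting the $V$-regularity of the relevant cardinals do the work, rather than appealing to an ``absoluteness of size constraints.''
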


The rigidty of $\rcol$ is relatively robust:
\begin{lemma}
\label{robust}
 Suppose $|\mathbb Q| < \kappa$.  If $G \subseteq \rcol$ is generic and $H \subseteq \mathbb Q$ is generic over $V[G]$, then in $V[G][H]$, $G$ is still the only filter which is $\rcol$-generic over $V$.
\end{lemma}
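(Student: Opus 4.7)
The plan is to replay the rigidity argument from the proof of Theorem \ref{main} inside the slightly larger extension $V[G][H]$.  Every step of that argument---the projection from a restricted $\mathbb P$ onto $\rcol\restriction p$, the $\Sigma_1$-absoluteness of $\spl$, and the consequences of Lemmas \ref{pos} and \ref{holes}---transfers essentially unchanged, so the only point that needs additional work is showing that the failure of $\spl(\mu,\kappa,V)$ in $V[G]$ survives to $V[G][H]$.

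For this I would first prove a preservation claim: if $V[G]\models \neg\spl(\mu,\kappa,V)$ and $\mathbb Q\in V[G]$ has $|\mathbb Q|<\kappa$, then $V[G][H]\models \neg\spl(\mu,\kappa,V)$.  The key observation is a pigeonhole.  Given a putative witness $A\in V[G][H]$ of size $\kappa$, fix a $\mathbb Q$-name $\dot A\in V[G]$ and set $A_q=\{\xi<\kappa : q\Vdash \xi\in\dot A\}\in V[G]$ for each $q\in\mathbb Q$.  Then $A=\bigcup_{q\in H}A_q$; since $\kappa$ remains regular in $V[G][H]$ and $|H|\leq|\mathbb Q|<\kappa$, some $A_{q_0}$ with $q_0\in H$ has cardinality $\kappa$.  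Being a subset of $A$, $A_{q_0}$ inherits the splitting property against all $x\in[\kappa]^\mu\cap V$, and $A_{q_0}$ belongs to $V[G]$, contradicting the hypothesis.

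With the preservation claim in hand, assume for contradiction that $G'\in V[G][H]$ is $\rcol$-generic over $V$ with $G'\neq G$.  Because any two distinct generic filters over a common ground model must be incomparable (each meets the dense set of conditions deciding any fixed element), one can pick the least $n$ with $G_n\neq G_n'$ and then choose $i\in X_n'\setminus X_n$.  Fix $p\in G$ forcing $i\notin \dot X_n$.  The projection $\sigma:\mathbb P\restriction(\kappa\setminus\{\alpha_i^{+n+2}\})\times\kappa^2 \to \rcol\restriction p$ from the proof of Theorem \ref{main} still exists in $V[G]$, so further forcing over $V[G]$ produces a $V$-generic $H^*$ for $\mathbb P\restriction(\kappa\setminus\{\alpha_i^{+n+2}\})\times\kappa^2$ with $V[G]\subseteq V[H^*]$.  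Lemma \ref{holes} gives $V[H^*]\models \neg\spl(\alpha_i^{+n+2},\kappa,V)$, and $\Pi_1$-downward absoluteness yields $V[G]\models \neg\spl(\alpha_i^{+n+2},\kappa,V)$; by the preservation claim this passes to $V[G][H]$.  On the other hand, $i\in X_n'$ places inside $V[G']$ a $V$-generic for $\col(\alpha_i^{+n+2},{<}\kappa)$, so Lemma \ref{pos} together with $\Sigma_1$-upward absoluteness gives $V[G][H]\models \spl(\alpha_i^{+n+2},\kappa,V)$, a contradiction.  The main obstacle is isolating the preservation claim; once stated it admits the short pigeonhole proof above.
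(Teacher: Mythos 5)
Your proof is correct and follows essentially the same route as the paper: both arguments reduce the claim to the observation that, because $|\mathbb Q|<\kappa$ and $\kappa$ stays regular, a $\mathbb Q$-generic extension of $V[G]$ cannot introduce a new witness to $\spl(\nu,\kappa,V)$, and both establish this by the same pigeonhole on $\{\xi : q \Vdash \xi\in\dot A\}$ to pull a size-$\kappa$ subwitness back into $V[G]$. The paper simply states the $\spl$ dichotomy for $G\neq G'$ in one line (citing the rigidity argument already given for Theorem \ref{main}) rather than replaying it, but the content is identical to your preservation claim followed by the replayed rigidity argument.
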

\begin{proof}
 Suppose the contrary.  If $G' \in V[G][H]$ were another $\rcol$-generic filter, then there would be some regular $\nu < \kappa$ such that $V[G'] \models \spl(\nu,\kappa,V)$ and $V[G] \models \neg \spl(\nu,\kappa,V)$.  Let $\dot A$ be a $\mathbb P$-name in $V[G]$ for a set forced to witness $\spl(\nu,\kappa,V)$.  Since $|\mathbb Q|<\kappa$, there is $B \in V[G]$ of size $\kappa$ and $p \in H$ such that $p \Vdash \check B \subseteq \dot A$.  However, $B$ also witnesses $\spl(\nu,\kappa,V)$, since for any $x \in [\kappa]^\nu \cap V$ and any $\alpha < \nu$, there is $y \in [x]^\alpha \cap V$ such that $\dot A^H \cap y = \emptyset$, and thus $B \cap y = \emptyset$.  Contradiction.
\end{proof}

The construction above of $\mathbb P$, $A_n$, $X_n$, $B_n$, $\mathbb P_n$, and $\rcol$ were relative to parameters $\mu$ and $\kappa$, so let us indicate this by writing $\mathbb P(\mu,\kappa)$, $\rcol(\mu,\kappa)$, etc., and let us write $\mathbb Q(\mu,\kappa)$ for $\mathbb P(\mu,\kappa) \restriction \bigcup_{n<\omega} A_n(\mu,\kappa)$.
We would like to record some useful facts about projections between different rigid collapses:
\begin{lemma}
\label{projs}
 Suppose $\mu < \kappa \leq \lambda < \delta$ are regular and $\lambda$ is inaccessible.
 \begin{enumerate}
\item There is a projection 
$\sigma : \rcol(\mu,\delta) \to \rcol(\mu,\kappa) \times \mathbb Q(\lambda,\delta)$
\item If $\pi : \rcol(\mu,\delta) \to \rcol(\mu,\kappa)$ is given by the first coordinate of the output of $\sigma$, and $X \subseteq  \rcol(\mu,\delta)$ is a directed set of size $<\mu$, then $\pi(\inf X) = \inf \pi[X]$.
\end{enumerate}
\end{lemma}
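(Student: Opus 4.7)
The plan is to define $\sigma: \rcol(\mu,\delta) \to \rcol(\mu,\kappa) \times \mathbb Q(\lambda,\delta)$ by $\sigma(p) = (\pi_1(p), \pi_2(p))$, where $\pi_1(p) = p \restriction \kappa^3$ and $\pi_2(p) = p \restriction \bigcup_{n<\omega} A_n(\lambda,\delta)$.  First one checks these restrictions land in the target.  Every first coordinate occurring in $\bigcup_n A_n(\lambda,\delta)$ is $\lambda$, an inaccessible in $[\lambda,\delta)$, or the successor of a singular cardinal of cofinality $\geq \lambda > \mu$, hence lies in the first-coordinate set of $A_0(\mu,\delta)$; so $\pi_2(p) \in \mathbb Q(\lambda,\delta)$, and it depends only on the stage-$0$ part of $p$.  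For $\pi_1$, the classification defining the $A_n$-partition is absolute between $[\mu,\kappa)$ and $[\mu,\delta)$, giving $A_n(\mu,\delta) \cap \kappa^3 = A_n(\mu,\kappa)$; and restricting the $\beta$-coordinate to $\beta < \kappa$ turns each $\col(\alpha,<\delta)$-component into a $\col(\alpha,<\kappa)$-component, so $\pi_1(p) \in \rcol(\mu,\kappa)$.

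For the projection properties, the crucial observation is compatibility of the iterated coding across the two rigid collapses.  I would prove by induction on $n$ that for any $\rcol(\mu,\delta)$-generic $G$, the set $X_n(\mu,\delta) \cap \kappa$ equals the $X_n$ coded by the $\kappa^3$-restricted generic inside the $\rcol(\mu,\kappa)$-presentation.  The base case uses that the paper's canonical enumeration of quadruples is uniform in the target ordinal, so $f_\delta \restriction \kappa = f_\kappa$, together with the fact that the $\kappa^3$-restriction of a $\mathbb P_0(\mu,\delta)$-generic is a $\mathbb P_0(\mu,\kappa)$-generic.  Since both variants of the construction build $B_{n+1}$ from $X_n$ by the same rule, one gets $B_{n+1}(\mu,\delta) \cap \kappa^3 = B_{n+1}(\mu,\kappa)$ in the induction step.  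With this alignment, $\pi_1$ maps the $n$-th iterand of the $\rcol(\mu,\delta)$-presentation onto the $n$-th iterand of the $\rcol(\mu,\kappa)$-presentation, and the iterated forcing ordering passes through.  For the pullback property, given $(r_1,r_2) \leq \sigma(p)$, the first-coordinate supports of $r_1$ and $r_2$ lie in the disjoint ranges $[\mu,\kappa)$ and $[\lambda,\delta)$, so $p' := p \cup r_1 \cup r_2$ is a well-defined element of $\rcol(\mu,\delta)$ below $p$ with $\sigma(p') \leq (r_1,r_2)$.

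The main obstacle is that the $\rcol$-ordering is strictly coarser than the superset ordering on $\mathbb P$, so $\pi_i(p) \leq \pi_i(q)$ cannot be deduced from $p \leq_{\rcol} q$ at the level of partial functions alone; the verification must proceed stage-by-stage in the iterated presentation, and the $B_n$-alignment described above is precisely what makes restriction commute with each stage's projection.  For part (2), I would apply the explicit formula from the proof of Lemma \ref{closure}: for a directed $X = \{p_i : i < \nu\}$ with $\nu < \mu$, $\inf X = \bigcup_i p_i \restriction d$, where $d$ is the set of coordinates at which no two of the $p_i$'s disagree.  Then $\pi(\inf X) = \bigcup_i p_i \restriction (d \cap \kappa^3)$, and the agreement set for $\pi[X] = \{p_i \restriction \kappa^3 : i < \nu\}$ in $\rcol(\mu,\kappa)$ is exactly $d \cap \kappa^3$, so Lemma \ref{closure} applied inside $\rcol(\mu,\kappa)$ gives $\inf \pi[X] = \pi(\inf X)$.
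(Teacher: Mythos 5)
Your proposal follows essentially the same route as the paper's own proof: the same map $\sigma(p) = (p\restriction\kappa^3,\ p\restriction\bigcup_n A_n^\lambda)$, the same observation that the second coordinate lands inside the level-$0$ part $A_0^\mu$ (so carries the plain superset order), the same stage-by-stage induction using canonicity of the quadruple-enumeration and of the enumeration of singular cardinals of cofinality $\mu$ to obtain $B_n(\mu,\delta)\cap\kappa^3=B_n(\mu,\kappa)$, and the same treatment of part (2) via the explicit infimum formula from Lemma~\ref{closure}. One small imprecision in the pullback step: from $r_1\leq_{\rcol(\mu,\kappa)}p\restriction\kappa^3$ you cannot conclude $r_1\supseteq p\restriction\kappa^3$ (as you yourself note, the $\rcol$-order is strictly coarser than superset), so $p\cup r_1\cup r_2$ is not automatically single-valued; $r_1$ may overwrite $p$ at coordinates that earlier stages have ``retired.'' The fix is routine given the alignment you established---either let $r_1$ take priority at the clash points, or first replace $r_1$ by an equivalent condition extending $p\restriction\kappa^3$---and then clauses of type (a)/(b) from the paper give $p'\leq_{\rcol(\mu,\delta)}p$, but as written the ``well-defined'' claim is not justified by the disjointness of the supports of $r_1$ and $r_2$ alone.
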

\begin{proof}
Note that for each $n$ and all regular cardinals $\alpha < \beta < \gamma$, $A_n(\alpha,\gamma) \cap V_\beta = A_n(\alpha,\beta)$, so we write $A_n^\alpha$ for the class $\bigcup_{\beta \in \ord} A_n(\alpha,\beta)$.
The map $\sigma$ is defined by $\sigma(p) = (p \restriction \kappa^3, p \restriction \bigcup_{n<\omega} A^\lambda_n)$.   The second coordinate works because $\bigcup_{n<\omega} A^\lambda_n \subseteq A^\mu_0$.  (The fact that $\lambda$ is inaccessible ensures that $A^\lambda_0 \subseteq A^\mu_0$.)

For the first coordinate, the key point is that the enumeration of quadruples of ordinals was canonical.  
We show by induction on restrictions to the sets $A^\mu_n$ that 
\begin{enumerate}[(a)]
 \item $p \leq_{\rcol(\mu,\delta)} q$ implies $p \restriction \kappa^3 \leq_{\rcol(\mu,\kappa)} q \restriction \kappa^3$, and
 \item $p \leq_{\rcol(\mu,\kappa)} q \restriction \kappa^3$ implies $p \cup q \restriction (\delta^3 \setminus \kappa^3) \leq_{\rcol(\mu,\delta)} q$.
\end{enumerate}
The base case is clear since the ordering restricted to $A^\mu_0$ is just the superset relation.  Suppose this is true for partial functions whose domains are contained in $\bigcup_{m< n} A^\mu_m$.
Suppose first $p,q \in \mathbb P(\mu,\delta) \restriction \bigcup_{m \leq n} A^\mu_m$, and $p \leq_{\rcol(\mu,\delta)} q$.  We must show $p \restriction \kappa^3 \leq_{\rcol(\mu,\kappa)} q \restriction \kappa^3$.  If $p \supseteq q$, then we are done, so the only situations to worry about are the points $(\alpha,\beta,\gamma) \in \dom q \cap A^\mu_n \cap \kappa^3$ such that $p(\alpha,\beta,\gamma) \not= q(\alpha,\beta,\gamma)$.  For such a point, it must be the case that some value of $p \restriction \bigcup_{m<n}A^\mu_{m}$ forces some ordinal $j$ to be not in $X_{n-1}$, where $\alpha = \alpha_j^{+n+1}$.  This is determined by the enumeration of quadruples of ordinals, and the increasing enumeration of the singular cardinals of cofinality $\mu$, both of which have the property that the enumeration up the rank $\kappa$ is an initial segment of the one up to rank $\delta$.  Thus, it is information in $p \cap V_\kappa$ that decides $\alpha$ is excluded from $X_{n-1}$, and the same data decide the same result in $\rcol(\mu,\kappa)$.  We conclude that $p \restriction \kappa^3 \leq_{\rcol(\mu,\kappa)} q \restriction \kappa^3$, and (a) follows by induction.  The same argument shows (b), and together these imply that $p \mapsto p \restriction \kappa^3$ is a projection from $\rcol(\mu,\delta)$ to $\rcol(\mu,\kappa)$.  

To show (2), let $X \subseteq \rcol(\mu,\delta)$ be a directed set of size $<\mu$.  The proof of Lemma \ref{closure} shows that $\inf X$ is given by $\bigcup X \setminus B$, where $B = \{ (\alpha,\beta,\gamma,\delta) : (\exists p,q \in X) p(\alpha,\beta,\gamma) \not= q(\alpha,\beta,\gamma) \}$.  $\pi[X]$ is also directed, and thus has an infimum in $\rcol(\mu,\kappa)$ defined by the same operation, $\bigcup \pi[X] \setminus B$.  Thus,
$\pi(\inf X) = (\bigcup X \setminus B) \restriction \kappa^3 = \bigcup_{p \in X} p \restriction \kappa^3 \setminus B = \inf \pi[X].$
\end{proof}

Suppose $\mathbb P$ is a partial order and $\dot{\mathbb Q}$ is a partial order in $V^{\mathbb P}$.  The \emph{termspace forcing}, $T(\mathbb P,\dot{\mathbb Q})$ is the collection of $\mathbb P$-names for elements of $\mathbb Q$ (in $H_\theta$, where $\theta$ is regular and $\mathbb P,\dot{\mathbb Q} \in H_\theta$), ordered by $\dot q_1 \leq \dot q_0$ iff $1 \Vdash_{\mathbb P} \dot q_1 \leq \dot q_0$.  It is easy to see that if $\dot{\mathbb Q}$ is forced to be $\kappa$-closed, then $T(\mathbb P,\dot{\mathbb Q})$ is $\kappa$-closed.  This idea is due to Laver, and we show now a slight generalization of the main lemma which Laver proved about this notion.

\begin{lemma}
\label{termspace}
 Suppose $\pi : \mathbb P \to \mathbb R$ is a projection, and $\dot{\mathbb Q}$ is an $\mathbb R$-name for a partial order.  Then $\dot{\mathbb Q}$ can be interpreted as a $\mathbb P$-name, and the identity map is a projection from $\mathbb P \times T(\mathbb R,\dot{\mathbb Q})$ to $\mathbb P * \dot{\mathbb Q}$.
\end{lemma}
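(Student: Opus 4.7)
My plan is to verify the two clauses in turn: first that $\dot{\mathbb{Q}}$ becomes a $\mathbb{P}$-name in a canonical way, and then that the identity satisfies the projection axioms. Since $\pi \colon \mathbb{P} \to \mathbb{R}$ is a projection, every $\mathbb{P}$-generic $G$ yields an $\mathbb{R}$-generic $\pi[G]$ with $V[\pi[G]] \subseteq V[G]$. This lets me translate each $\mathbb{R}$-name $\dot\sigma$ into a $\mathbb{P}$-name $\pi^*\dot\sigma$ by the usual recursion, arranged so that $(\pi^*\dot\sigma)^G = \dot\sigma^{\pi[G]}$. Applying this to $\dot{\mathbb{Q}}$ and to each element of $T(\mathbb{R}, \dot{\mathbb{Q}})$ makes the ``identity'' map $(p, \dot q) \mapsto (p, \pi^*\dot q)$ from $\mathbb{P} \times T(\mathbb{R}, \dot{\mathbb{Q}})$ to $\mathbb{P} * \dot{\mathbb{Q}}$ well-defined.

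Order preservation is immediate: if $(p_1, \dot q_1) \leq_{\mathbb{P} \times T} (p_0, \dot q_0)$, then $p_1 \leq_{\mathbb{P}} p_0$ and $1_{\mathbb{R}} \Vdash_{\mathbb{R}} \dot q_1 \leq \dot q_0$. The second condition transfers via $\pi^*$ to $1_{\mathbb{P}} \Vdash_{\mathbb{P}} \pi^*\dot q_1 \leq \pi^*\dot q_0$, which is certainly forced by $p_1$, yielding $(p_1, \pi^*\dot q_1) \leq_{\mathbb{P} * \dot{\mathbb{Q}}} (p_0, \pi^*\dot q_0)$.

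The core of the proof is the projection-lifting clause: given $(r, \dot s) \leq_{\mathbb{P} * \dot{\mathbb{Q}}} (p, \pi^*\dot q)$ with $\dot s$ a general $\mathbb{P}$-name, I must find $(p', \dot q') \leq_{\mathbb{P} \times T} (p, \dot q)$ such that $(p', \pi^*\dot q') \leq_{\mathbb{P} * \dot{\mathbb{Q}}} (r, \dot s)$. I take $p' := r$. To construct $\dot q'$, I work in $V^{\mathbb{R}}$: outside $\pi(r)$, set $\dot q' := \dot q$; below $\pi(r)$, view $\dot s$ as a $\mathbb{P}/\dot H$-name in $V[\dot H]$, where the hypothesis becomes $r \Vdash_{\mathbb{P}/\dot H} \dot s \leq \dot q^{\dot H}$. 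Choose a maximal antichain $\{\dot p_i\}$ of conditions below $r$ in $\mathbb{P}/\dot H$ each deciding $\dot s$ to some value $\dot e_i \leq \dot q^{\dot H}$, and let $\dot q'$ be the canonical mixture of the $\dot e_i$ along this antichain. By construction, $\dot q' \leq \dot q^{\dot H}$ and $r \Vdash_{\mathbb{P}/\dot H} \dot q' \leq \dot s$, so after assembly $1_{\mathbb{R}} \Vdash_{\mathbb{R}} \dot q' \leq \dot q$ and $r \Vdash_{\mathbb{P}} \pi^*\dot q' \leq \dot s$, as required.

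The main obstacle is precisely this mixing step. Because $\dot s^G$ genuinely depends on the $\mathbb{P}/\mathbb{R}$-part of $G$, in general no single element of the bare poset $\dot{\mathbb{Q}}^{V[H]}$ is a common lower bound of the values $\dot s^G$ as $G$ varies over extensions of $H$ through $r$. The resolution is to interpret the mixture inside the Boolean completion of $\dot{\mathbb{Q}}^{V[\dot H]}$, in which the canonical sum $\sum_i \dot p_i \cdot \dot e_i$ is always well-defined and lies below both $\dot q^{\dot H}$ and $\dot s$ under $r$; this is legitimate because $T(\mathbb{R}, \dot{\mathbb{Q}})$ is equivalent, modulo a dense embedding, to the termspace over the completion. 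Once this mixing is in hand, the projection-lifting property, and hence the lemma, follows from routine bookkeeping.
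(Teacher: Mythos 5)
Your construction breaks down at the mixing step, and the obstacle you identify is not actually resolved by passing to the Boolean completion. Having fixed $p' := r$, you would need a \emph{single} $\mathbb R$-name $\dot q'$ such that $r \Vdash_{\mathbb P} \pi^*\dot q' \leq \dot s$; since $\dot q'^{\,\dot H}$ depends only on the $\mathbb R$-part of the generic, this forces $\dot q'^{\,\dot H}$ to lie below $\dot e_i^{\,\dot H}$ for \emph{every} $i$ in your antichain, i.e.\ to be (at most) the infimum $\prod_i \dot e_i$ in $\mathcal B(\dot{\mathbb Q})^{V[\dot H]}$. That infimum can be zero, so there is in general no legal element of $T(\mathbb R,\dot{\mathbb Q})$ doing the job. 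The ``canonical sum $\sum_i \dot p_i \cdot \dot e_i$'' is also not well-formed: the $\dot p_i$ live in $\mathbb P/\dot H$ and the $\dot e_i$ in $\mathcal B(\dot{\mathbb Q})^{V[\dot H]}$, so the product is undefined, and a sum would in any case give an upper bound rather than the lower bound you need. Density of $T(\mathbb R,\dot{\mathbb Q})$ in $T(\mathbb R,\mathcal B(\dot{\mathbb Q}))$ is true but irrelevant once the would-be target is $0$.

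The missing idea is that you are allowed, and in fact must, strengthen the $\mathbb P$-coordinate below $r$. Given $(r,\dot s) \leq_{\mathbb P * \dot{\mathbb Q}} (p,\pi^*\dot q)$, first find $p_2 \leq r$ and an $\mathbb R$-name $\dot q_2$ with $p_2 \Vdash_{\mathbb P} \dot s = \pi^*\dot q_2$; this is a routine density argument and removes the dependence of $\dot s$ on the $\mathbb P/\mathbb R$-part of the generic at the cost of shrinking $r$. Then form the $\mathbb R$-name $\dot q_3$ that equals $\dot q_2$ below $\pi(p_2)$ and equals $\dot q$ on conditions incompatible with $\pi(p_2)$. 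One checks $1_{\mathbb R} \Vdash \dot q_3 \leq \dot q$ (so $(p_2,\dot q_3) \leq (p,\dot q)$ in the product) and $(p_2,\pi^*\dot q_3) \leq_{\mathbb P * \dot{\mathbb Q}} (r,\dot s)$. This two-case mixture over $\mathbb R$, with a strengthened $\mathbb P$-condition, is exactly what your single-condition mixture over a $\mathbb P/\dot H$-antichain cannot deliver.
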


\begin{proof}
Suppose $(p_0,\dot q_0) \in \mathbb P \times T(\mathbb R,\dot{\mathbb Q})$, and $(p_1,\dot q_1) \leq_{\mathbb P * \dot{\mathbb Q}} (p_0,\dot q_0)$.  Find $p_2 \leq p_1$ and an $\mathbb R$-name $\dot q_2$ such that $p_2 \Vdash \dot q_1 = \dot q_2$.  Then build an $\mathbb R$-name $\dot q_3$ such that $\pi(p_2) \Vdash \dot q_3 = \dot q_2$ and $r \Vdash \dot q_3 = \dot q_0$ whenever $r \perp \pi(p_2)$.  Then $(p_2,\dot q_3) \leq_{\mathbb P * \dot{\mathbb Q}} (p_1,\dot q_1)$, since whenever $H$ is generic for $\mathbb P$ with $p_2 \in H$, then $\dot q_3^{\pi[H]} = \dot q_2^{\pi[H]} = \dot q_1^H$.   Furthermore, $1 \Vdash_{\mathbb R} \dot q_3 \leq \dot q_0$.  For let $G$ be generic for $\mathbb R$.  If $\pi(p_2) \notin G$, then $\dot q_3^G = \dot q_0^G$.  If $\pi(p_2) \in G$, then we can do further forcing to produce $H$ generic for $\mathbb P$, with $G = \pi[H]$.  Since $p_2 \Vdash \dot q_3 = \dot q_2 = \dot q_1 \leq \dot q_0$, it must already be true in $V[G]$ that $\dot q_3^G \leq \dot q_0^G$.
\end{proof}

Shioya \cite{shioyanew} showed that if $\kappa$ is regular and $\mathbb P$ is a $\kappa$-c.c.\ partial order of size $\leq \kappa$, then for every $\delta$ such that $\delta^{<\kappa} = \delta$, there is a dense embedding $d : \col(\kappa,{<}\delta) \to T(\mathbb P,\dot{\col}(\kappa,{<}\delta))$.  The same argument shows the following for our Easton products of L\'evy collapses:

\begin{lemma}
\label{shioya}
Assume GCH, $\kappa$ is regular, and $\mathbb R$ is a $\kappa$-c.c.\ partial order of size $\leq \kappa$.  Then for every regular $\delta \geq \kappa$, there is a dense embedding $d : \mathbb Q(\kappa,\delta) \to T(\mathbb R,\dot{\mathbb Q}(\kappa,\delta))$ 
\end{lemma}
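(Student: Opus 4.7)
The strategy is to follow Shioya's original argument for the single Levy factor and lift it to the Easton product $\mathbb Q(\kappa,\delta)$. My plan has two main steps: reduce to individual factors, then apply Shioya's encoding per factor and take the Easton product of the resulting dense embeddings.

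The first step is a reduction. I would show that $T(\mathbb R, \dot{\mathbb Q}(\kappa,\delta))$ is densely embedded by the Easton product $\prod^E_\alpha T(\mathbb R, \col(\alpha,{<}\delta))$ ranging over the regular cardinals $\alpha$ appearing as first coordinates of $\bigcup_{n<\omega} A_n(\kappa,\delta)$. The key observation is that $|\mathbb R| \leq \kappa$ and $\mathbb R$'s $\kappa$-c.c.\ together let us refine any nice $\mathbb R$-name for a condition in $\mathbb Q(\kappa,\delta)$ so that its potential support, as computed in $V$, is a ground-model Easton set, and coordinate-wise each position is a nice $\mathbb R$-name for an element of the factor $\col(\alpha,{<}\delta)$. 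Granted this reduction, it suffices to produce, for each such $\alpha$, a dense embedding $d_\alpha : \col(\alpha,{<}\delta) \to T(\mathbb R, \col(\alpha,{<}\delta))$, and to take the Easton product.

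The second step is Shioya's per-factor encoding. GCH gives $|\mathbb R| \cdot \alpha = \alpha$, so fix a bijection $\phi : \mathbb R \times \alpha \to \alpha$. Use $\phi$ to decompose each $p \in \col(\alpha,{<}\delta)$ into $|\mathbb R|$-many disjoint-coordinate ``slices'' $p_r$ for $r \in \mathbb R$, each sitting on the $\alpha$-coordinate range $\phi[\{r\} \times \alpha]$. Define $d_\alpha(p)$ to be the $\mathbb R$-name whose interpretation relative to the generic is the re-indexed slice $p_r$ (translated back into $\col(\alpha,{<}\delta)$ via $\phi^{-1}$) for the relevant $r$ from a fixed maximal antichain. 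Order- and antichain-preservation then follow from the independence of the slices across distinct $r$. For density, given a nice $\mathbb R$-name $\dot q$ for a condition, use $\mathbb R$'s $\kappa$-c.c.\ to choose a maximal antichain $A \subseteq \mathbb R$ of size $<\kappa$ each of whose members $r$ decides $\dot q = q_r$ for some $q_r \in \col(\alpha,{<}\delta)$; then assemble $p \in \col(\alpha,{<}\delta)$ by placing a copy of $q_r$ on slice $\phi[\{r\} \times \alpha]$ for each $r \in A$. Regularity of $\alpha$, together with $|A| < \kappa \leq \alpha$ and $|q_r| < \alpha$, gives $|p| = \sum_{r \in A}|q_r| < \alpha$, and $d_\alpha(p) \leq \dot q$ in the termspace by construction.

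The main obstacle will be the first step: carefully justifying that a nice $\mathbb R$-name for an Easton-support condition in $\mathbb Q(\kappa,\delta)$ can be refined so that its potential support genuinely lies in $V$ (and not just in $V[\dot G]$), and in particular remains Easton. This requires combining $\mathbb R$'s $\kappa$-c.c., preservation of cofinalities and cardinals, and the GCH-driven counts that $|\col(\alpha,{<}\delta)| = \delta$ and that nice names at each factor live in a space of matching cardinality $\delta$, so that the factor decomposition sits densely inside the ambient termspace. Once this reduction is in hand, the per-factor encoding is direct, and the combined embedding $d = \prod_\alpha d_\alpha$ is the required dense embedding.
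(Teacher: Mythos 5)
The paper offers no explicit proof of this lemma; it only remarks that Shioya's argument for the single L\'evy collapse ``shows the same'' for the Easton products $\mathbb Q(\kappa,\delta)$. Your two-step plan --- realize $T(\mathbb R,\dot{\mathbb Q}(\kappa,\delta))$ as the Easton product $\prod^E_\alpha T(\mathbb R,\dot\col(\alpha,{<}\delta))$, then apply Shioya's theorem coordinatewise --- is a sound and natural way to cash out that remark. The reduction in step one is the genuinely new content, and the point you flag as an obstacle resolves cleanly once you remember that \emph{every first coordinate appearing in $\mathbb Q(\kappa,\delta)$ is a regular cardinal $\geq\kappa$}. For a term $\dot p$, the set $S=\{\alpha:1\not\Vdash\dot p(\alpha)=1\}$ is bounded below every regular $\gamma$: below $\kappa$ it is empty, and for regular $\gamma\geq\kappa$ the $\mathbb R$-name $\sup(\gamma\cap\supp\dot p)$ is decided by an antichain of size $<\kappa\leq\gamma$, so its possible values are bounded below $\gamma$. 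Thus $S$ is Easton in $V$, and $\dot p\mapsto(\dot p(\alpha))_\alpha$ maps into $\prod^E_\alpha T(\mathbb R,\dot\col(\alpha,{<}\delta))$, giving the dense embedding you need; be explicit that the $\kappa$-c.c.\ plus the constraint $\alpha\geq\kappa$ is what makes this work.

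Step two, by contrast, is stated loosely enough to be worrying if taken literally. The rule ``$d_\alpha(p)=$ the re-indexed slice $p_r$ for the relevant $r$ from a fixed maximal antichain'' is not a well-defined map: the antichain cannot be fixed before $p$ is given, and if it varies with $p$ one does not obviously get order- or antichain-preservation. Your density argument likewise assumes that an arbitrary term $\dot q$ is decided along a maximal antichain to a ground-model condition, which need not hold because $\mathbb R$ may add new elements of $\col(\alpha,{<}\delta)$ (certainly when $\alpha=\kappa$). These are exactly the delicate points Shioya's proof handles; you should not re-derive them but instead invoke his theorem as a black box, noting that for each relevant $\alpha$ its hypotheses are met with $\kappa$ replaced by $\alpha$: $\mathbb R$ is $\alpha$-c.c.\ (since $\alpha\geq\kappa$), $|\mathbb R|\leq\kappa\leq\alpha$, and GCH gives $\delta^{<\alpha}=\delta$. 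With each $d_\alpha$ supplied by Shioya and your step-one factorization in hand, the lemma follows.
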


We also need the following folklore result:\footnote{See \cite{cummingshandbook}.}

\begin{lemma}
\label{folk}
If $\kappa$ is a regular cardinal and $\mathbb P$ is a $\kappa$-closed partial order forcing $|\mathbb P| = \kappa$, then there is a dense embedding $d : \col(\kappa,|\mathbb P|) \to \mathbb P$.
\end{lemma}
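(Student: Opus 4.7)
The plan is to construct a dense subset of $\mathbb{P}$ order-isomorphic to the canonical dense subtree $\lambda^{<\kappa}$ of $\col(\kappa,\lambda)$ (where $\lambda=|\mathbb{P}|$, which we may assume $\geq\kappa$), under reverse end-extension; this immediately yields the desired dense embedding. A useful preliminary observation is that below every $q\in\mathbb{P}$ there is a maximal antichain of size $\lambda$: otherwise $\mathbb{P}\restriction q$ would be $\lambda$-c.c.\ and hence preserve $\lambda$ as a cardinal, contradicting that $q$ forces $\lambda$ to collapse to $\kappa$.

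I will build conditions $q_s\in\mathbb{P}$ for $s\in\lambda^{<\kappa}$ by recursion on $|s|$: take $q_\emptyset = 1_{\mathbb{P}}$; at a successor stage, let $\{q_{s^\frown\la\beta\ra}:\beta<\lambda\}$ enumerate a maximal antichain of size $\lambda$ below $q_s$; at a limit stage $|s|<\kappa$, let $q_s$ be a lower bound of $\{q_{s\restriction\gamma}:\gamma<|s|\}$ given by $\kappa$-closure. Running a parallel bookkeeping on a fixed enumeration $\mathbb{P}=\{p_\alpha:\alpha<\lambda\}$, I arrange the successor-stage antichains so that every $p_\alpha$ is refined by some $q_s$, making $\{q_s:s\in\lambda^{<\kappa}\}$ dense in $\mathbb{P}$.

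The map $d:\col(\kappa,\lambda)\to\mathbb{P}$ is then defined on the dense subset of total functions $s\in\lambda^{<\kappa}$ by $d(s)=q_s$, and extended to arbitrary partial conditions by $d(s)=q_{\bar s}$ for some fixed canonical total extension $\bar s\supseteq s$. Order preservation is immediate from the tree construction. For incompatibility preservation, if $s,t\in\col(\kappa,\lambda)$ are incompatible then $\bar s$ and $\bar t$ must first differ at some coordinate $\alpha$, so $d(s)$ and $d(t)$ lie below distinct members of the maximal antichain at level $\alpha+1$ and are incompatible in $\mathbb{P}$. Density of the image follows from the bookkeeping.

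The main obstacle is executing the bookkeeping within the cardinality budget while maintaining full $\lambda$-branching at every node of the tree; this needs $|\lambda^{<\kappa}|=\lambda$, which holds under the paper's GCH assumption. Granted this, a standard diagonal enumeration pairs each $p_\alpha$ with a level of the tree at which the chosen maximal antichain can be arranged to refine $p_\alpha$ on one branch and be incompatible with it on the others.
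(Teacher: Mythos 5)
Your overall strategy---build a $\lambda$-branching tree of height $\kappa$ inside $\mathbb{P}$ and read off the dense embedding---is the standard one, and most of the local pieces (the existence of size-$\lambda$ maximal antichains below every condition, the reduction to $\lambda^{<\kappa}$ as a dense subtree of $\col(\kappa,\lambda)$, the compatibility argument for $d$) are fine. The gap is in the density step, and it is not merely a matter of omitted bookkeeping detail: the claim that one can ``arrange the successor-stage antichains so that every $p_\alpha$ is refined by some $q_s$'' while choosing \emph{arbitrary} lower bounds at limit stages is not justified, and in fact the limit choices cannot be ignored. The problem is that at a limit level $\gamma$, the set $\{q_s : s\in\lambda^\gamma\}$ need not be a maximal antichain: if one traces a condition $p$ down a branch, obtaining a descending chain $r_\xi\le p,\,q_{s\restriction\xi}$, a lower bound $r_\gamma$ of that chain lies below every $q_{s\restriction\xi}$ but need not be compatible with whatever lower bound $q_s$ you happened to pick. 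Once $p$ is incompatible with every node at some limit level, it is incompatible with every node at every higher level as well, and no amount of refinement at later successor stages can recover it. A concrete instance: take $\mathbb{P}$ to be the dense suborder of $\col(\kappa,\lambda)$ consisting of conditions whose domain is not a limit ordinal, and at each limit node $s$ set $q_s = \bigcup_\xi q_{s\restriction\xi}\cup\{(\delta,0)\}$ where $\delta$ is the relevant limit; then a condition $p$ with $p(\delta)=1$ is incompatible with every node from level $\delta$ onward, and whether $p$ is caught depends entirely on whether you processed it \emph{before} reaching that limit, which your scheme does not guarantee when $\lambda>\kappa$.

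The usual resolutions are either (a) to carry out the construction inside $\mathcal{B}(\mathbb{P})$ and take the \emph{infimum} of each branch at limit stages---infima of chains of length $<\kappa$ are nonzero by $\kappa$-closure of $\mathbb{P}$, and with infima every limit level \emph{is} a maximal antichain, after which a per-node bookkeeping on the successor antichains does suffice---or (b) to choose the limit-stage lower bounds as part of the bookkeeping, pairing conditions with branches rather than with levels and taking the lower bound at the designated limit node to sit below the condition being processed. Note that option (a) naturally yields a dense embedding $\col(\kappa,\lambda)\to\mathcal{B}(\mathbb{P})$ rather than into $\mathbb{P}$ itself, which is all that is needed for the applications in this paper; your proposal aims for the literal statement, which makes the limit step genuinely harder. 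Finally, one small point about your closing paragraph: extending $d$ from the dense subtree $\lambda^{<\kappa}$ to all of $\col(\kappa,\lambda)$ by picking ``some fixed canonical total extension $\bar s\supseteq s$'' does not give an order-preserving map (if $s\subseteq s'$ there is no reason $\bar s\subseteq\bar s'$); one should either leave $d$ defined only on the dense subtree, or extend it into the Boolean completion by taking suprema.
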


\begin{corollary}
\label{proj2}
 Suppose $\mu < \kappa \leq \lambda < \delta$ are regular, $\kappa$ is Mahlo, and $\lambda$ is inaccessible.  Suppose $|\mathbb P| < \kappa$ and $\sigma$ is a $\mathbb P \times \rcol(\mu,\kappa)$-name for a projection from $\col(\kappa,\lambda)$ to some partial order $\dot{\mathbb R}$, which is forced to be $\lambda$-c.c.
 Then there is a projection 
 $$ \pi : \mathbb P \times \rcol(\mu,\delta) \to (\mathbb P \times \rcol(\mu,\kappa)) * \dot{\col}(\kappa,\lambda)  * \dot{\rcol}(\lambda,\delta)^{V^{(\mathbb P \times \rcol(\mu,\kappa)) * \dot{\mathbb R}}}.$$
\end{corollary}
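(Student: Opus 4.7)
Set $\mathbb T := (\mathbb P \times \rcol(\mu,\kappa)) * \dot{\col}(\kappa,\lambda)$ and $\mathbb S := (\mathbb P \times \rcol(\mu,\kappa)) * \dot{\mathbb R}$, so the target is $\mathbb T * \dot{\rcol}(\lambda,\delta)^{V^{\mathbb S}}$. The plan is to combine three tools already available: Lemma~\ref{projs}(1) to split $\rcol(\mu,\delta)$ into a factor below $\kappa$ and a factor in $\mathbb Q(\lambda,\delta)$; Shioya's Lemma~\ref{shioya} to realize $\mathbb Q(\lambda,\delta)$ as a termspace forcing over $\dot{\mathbb R}$; and Laver's Lemma~\ref{termspace} to fold that termspace together with $\col(\kappa,\lambda)$ into the iteration $\col(\kappa,\lambda) * \dot{\rcol}(\lambda,\delta)$. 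The main calculation is done inside $V' := V^{\mathbb P \times \rcol(\mu,\kappa)}$, where GCH is preserved by the $\mu$-closed $\kappa$-c.c.\ forcing.

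\textbf{Enhanced splitting and termspace substitution.} I would first refine Lemma~\ref{projs}(1) to produce a projection $\rcol(\mu,\delta) \to \rcol(\mu,\kappa) \times \col(\kappa,\lambda) \times \mathbb Q(\lambda,\delta)$ via $p \mapsto (p \restriction \kappa^3,\, p \restriction \{\kappa\} \times \lambda \times \lambda,\, p \restriction \bigcup_{n<\omega} A_n^\lambda)$. The middle factor extracts the $\col(\kappa,{<}\lambda)$ block sitting inside the $A_0^\mu$ coordinate at $\kappa$ of $\mathbb P(\mu,\delta)$; the projection property is obtained by running the inductive argument from Lemma~\ref{projs}(1) with this additional coordinate. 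Next, working in $V'$, the partial order $\mathbb R$ is $\lambda$-c.c.\ of size at most $\lambda$ (as a quotient of $\col(\kappa,\lambda)$), so Lemma~\ref{shioya} with $\kappa' = \lambda$, $\delta' = \delta$, and $\mathbb R' = \mathbb R$ yields a dense embedding $\mathbb Q(\lambda,\delta) \to T(\mathbb R, \dot{\mathbb Q}(\lambda,\delta))$. Composing with the termspace-level projection induced by the identity map $\mathbb Q(\lambda,\delta) \to \rcol(\lambda,\delta)$ (valid because the $\rcol$-ordering is coarser than the product ordering of $\mathbb Q$) produces a projection $\mathbb Q(\lambda,\delta) \to T(\mathbb R, \dot{\rcol}(\lambda,\delta))$.

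\textbf{Assembly.} Lemma~\ref{termspace} applied in $V'$ to the projection $\sigma : \col(\kappa,\lambda) \to \mathbb R$ and the $\lambda$-closed name $\dot{\rcol}(\lambda,\delta)$ gives a projection $\col(\kappa,\lambda) \times T(\mathbb R, \dot{\rcol}(\lambda,\delta)) \to \col(\kappa,\lambda) * \dot{\rcol}(\lambda,\delta)$. Composing with the termspace substitution yields, in $V'$, a projection $\col(\kappa,\lambda) \times \mathbb Q(\lambda,\delta) \to \col(\kappa,\lambda) * \dot{\rcol}(\lambda,\delta)$. Lifting this as an iteration over $\mathbb P \times \rcol(\mu,\kappa)$ and composing with the splitting step gives the desired projection $\mathbb P \times \rcol(\mu,\delta) \to \mathbb T * \dot{\rcol}(\lambda,\delta)$; that the interpretation of $\dot{\rcol}(\lambda,\delta)$ in $V^{\mathbb T}$ agrees with the one in $V^{\mathbb S}$ (as the statement requires) follows because both $\mathbb T$ and $\mathbb S$ are $\lambda$-c.c.\ of size at most $\lambda$, and hence preserve the cardinal and cofinality structure of $V_\delta$ used in the definition of $\rcol(\lambda,\delta)$.

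\textbf{Main obstacle.} The most delicate step is the enhanced splitting: the ordering on $\rcol(\mu,\delta)$ is the iterative one, not the product ordering on $\mathbb P(\mu,\delta)$, so the projection property must be verified against the $\omega$-step iterative structure. This mirrors the technical point in the proof of Lemma~\ref{projs}(1), but here one must run the inductive argument simultaneously for all three output coordinates, ensuring that any strengthening of the output triple can be realized by adjoining values to $p$ at fresh points in the three disjoint subsets of $\bigcup_{n<\omega} A_n^\mu$ corresponding to the three factors.
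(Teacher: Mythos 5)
The overall strategy is the right one, and it matches the paper's in outline: split $\rcol(\mu,\delta)$ into a factor below $\kappa$, a middle $\col$-factor, and a tail factor in $\mathbb Q(\lambda,\delta)$; then use Lemma~\ref{shioya}, Lemma~\ref{folk}, and Lemma~\ref{termspace} to reorganize the product into the target iteration. The paper's middle factor is a $\col(\mu,\lambda)$-block (used to absorb a termspace for $\dot{\col}(\kappa,\lambda)$), while you extract a $\col(\kappa,\lambda)$-block directly; either choice of block can in principle be made to work. However, as written your argument has a genuine gap coming from the ground-model vs.\ extension distinction, which the paper is careful to avoid by doing all termspace manipulations in $V$ rather than in $V' = V^{\mathbb P \times \rcol(\mu,\kappa)}$.

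The precise problem: the three factors you extract from $\rcol(\mu,\delta)$ are the ground-model partial orders $\rcol(\mu,\kappa)^V$, $\col(\kappa,\lambda)^V$, and $\mathbb Q(\lambda,\delta)^V$. But your Lemma~\ref{shioya} application is carried out in $V'$ with base forcing $\mathbb R$, which produces a dense embedding of $\mathbb Q(\lambda,\delta)^{V'}$ into $T(\mathbb R, \dot{\mathbb Q}(\lambda,\delta))^{V'}$, and your Lemma~\ref{termspace} application is in $V'$ with base $\col(\kappa,\lambda)^{V'}$. Since $\mathbb P \times \rcol(\mu,\kappa)$ is $\kappa$-c.c.\ of size $\kappa$, it adds new bounded subsets of $\lambda$ and hence new conditions: $\col(\kappa,\lambda)^V \subsetneq \col(\kappa,\lambda)^{V'}$ and $\mathbb Q(\lambda,\delta)^V \subsetneq \mathbb Q(\lambda,\delta)^{V'}$, and the ground-model versions are not dense in the extension versions (extensions of a Cohen-style condition can only enlarge the domain, never correct values on a new function coded into a condition). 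So when you ``compose with the splitting step,'' the two maps simply do not match up: one ends at $V$-objects and the other begins at $V'$-objects. Relatedly, the assembly step tacitly requires a projection from $(\mathbb P \times \rcol(\mu,\kappa)) \times \col(\kappa,\lambda)^V$ onto $(\mathbb P \times \rcol(\mu,\kappa)) * \dot{\col}(\kappa,\lambda)$; this needs a separate invocation of Lemma~\ref{folk} (to identify $\col(\kappa,\lambda)^V$ with $T(\mathbb P \times \rcol(\mu,\kappa), \dot{\col}(\kappa,\lambda))$, or to absorb that termspace) followed by Lemma~\ref{termspace}, which you never invoke.

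The repair, which is exactly what the paper does, is to run every termspace step in $V$. Apply Lemma~\ref{shioya} in $V$ with base forcing the full two-step iteration $\mathbb S = (\mathbb P \times \rcol(\mu,\kappa)) * \dot{\mathbb R}$ (which is $\lambda$-c.c.\ of size $\le \lambda$ in $V$), getting a dense embedding of the \emph{ground-model} $\mathbb Q(\lambda,\delta)$ into $T(\mathbb S, \dot{\mathbb Q}(\lambda,\delta))$; apply Lemma~\ref{folk} in $V$ to absorb $T(\mathbb P \times \rcol(\mu,\kappa),\dot{\col}(\kappa,\lambda))$ into the ground-model middle $\col$-factor; then apply Lemma~\ref{termspace} (again in $V$) to fold both termspaces into the iteration. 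Finally, your closing remark about $\rcol(\lambda,\delta)^{V^{\mathbb T}}$ ``agreeing with'' $\rcol(\lambda,\delta)^{V^{\mathbb S}}$ is both unproved and unnecessary: these extensions add different bounded subsets of $\lambda$, so the posets do not literally coincide. What the statement actually requires is just that the $\mathbb S$-name $\dot{\rcol}(\lambda,\delta)$ be reinterpreted as a $\mathbb T$-name along the projection $\mathbb T \to \mathbb S$ induced by $\sigma$; that reinterpretation is built into Lemma~\ref{termspace} and needs no separate absoluteness argument.
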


\begin{proof}
 Using Lemma \ref{projs}, there is a projection
 $$\pi_0 : \mathbb P \times \rcol(\mu,\delta) \to \mathbb P \times \rcol(\mu,\kappa) \times \col(\mu,\lambda) \times \mathbb Q(\lambda,\delta).$$
 Lemma \ref{folk} implies that there is a dense embedding
 $$d_0 : \col(\mu,\lambda) \to \col(\mu,\lambda) \times T(\mathbb P \times \rcol(\mu,\kappa),\dot{\col}(\kappa,\lambda)).$$
Therefore, using Lemma \ref{termspace}, there is a projection $\pi_1$ from the codomain of $\pi_0$ to
$$((\mathbb P \times \rcol(\mu,\kappa))  * \dot{\col}(\kappa,\lambda)) \times \mathbb Q(\lambda,\delta).$$
 By Lemma \ref{shioya}, there is also a dense embedding 
 $$d_1: \mathbb Q(\lambda,\delta) \to T((\mathbb P \times \rcol(\mu,\kappa)) * \dot{\mathbb R},\dot{\mathbb Q}(\lambda,\delta)).$$
 Combining these gives us a projeciton $\pi_2$ from the codmain of $\pi_1$ to
 $$(\mathbb P \times \rcol(\mu,\kappa)) * \dot{\col}(\kappa,\lambda)) * (\dot{\mathbb Q}(\lambda,\delta))^{V^{(\mathbb P \times \rcol(\mu,\kappa)) * \dot{\mathbb R}}}.$$
Finally, in $V^{(\mathbb P \times \rcol(\mu,\kappa)) * \dot{\mathbb R}}$, there is a projection $\pi_3 : \mathbb Q(\lambda,\delta) \to \rcol(\lambda,\delta)$.   Applying this to the last term above yields the desired projection.
 \end{proof}
 
 A close examination of the construction of the above projection reveals:
 \begin{proposition}
 \label{closedquotient}
 Suppose that in the hypotheses of the previous corollary, $\mathbb P$ is $\nu$-distributive, for some $\nu \leq \mu$.  If $G$ is generic for the image of the projection, then the quotient forcing $\rcol(\mu,\delta)/G$ is $\nu$-directed-closed.
 \end{proposition}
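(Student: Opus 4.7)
The plan is to trace the construction of $\pi$ from Corollary \ref{proj2}, observe that each constituent map preserves directed infima of size $<\mu$, and combine this with Lemma \ref{closure}. Begin with the structural observation that at every stage of the construction of $\pi$ the $\mathbb P$-coordinate passes through unchanged: the restriction projection from Lemma \ref{projs} touches only the $\rcol(\mu,\delta)$-component, and the termspace absorption (Lemma \ref{termspace}) together with the dense embeddings (Lemma \ref{shioya}, Lemma \ref{folk}) keep the $\mathbb P$-factor intact. Consequently, a condition $(p,q)$ lies in the quotient exactly when $p \in G_{\mathbb P}$ and the non-$\mathbb P$ coordinates of $\pi(p,q)$, which depend only on $q$, lie in the corresponding generic factors of $G$.

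The next step is to verify that each constituent of $\pi$ preserves directed infima of size $<\mu$. For the initial restriction projection this is exactly Lemma \ref{projs}(2). The dense embeddings from Lemma \ref{shioya} and Lemma \ref{folk} are order isomorphisms onto dense subsets and hence preserve infima. The termspace projection from Lemma \ref{termspace} is the identity on underlying conditions, and in the product ordering $<\mu$-sized directed infima are inherited componentwise, so they translate to lower bounds in the iteration $\mathbb P * \dot{\mathbb Q}$ as well.

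Now consider a directed set $X = \{q_i : i < \alpha\}$ in the quotient with $\alpha < \nu$. I would first argue $X \in V$: each factor of the image forcing is at least $\nu$-distributive ($\mathbb P$ by hypothesis; $\rcol(\mu,\kappa)$ is $\mu$-directed-closed by Lemma \ref{closure}; $\col(\kappa,\lambda)$ is $\kappa$-closed; $\rcol(\lambda,\delta)$ is $\lambda$-directed-closed), and each such property survives the preceding $\nu$-distributive factors, so the full image is $\nu$-distributive over $V$. Then Lemma \ref{closure} in $V$ produces $q^* = \inf X$ in $\rcol(\mu,\delta)$. Applying the infimum-preservation established above to $\pi(p,q^*)$ for any $p \in G_{\mathbb P}$ shows that the non-$\mathbb P$ coordinates of $\pi(p,q^*)$ are infima of directed subsets of the respective generic factors. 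A standard density argument in each $\nu$-directed-closed factor (the set of conditions either below the infimum or incompatible with some element of the directed set is dense) places these infima in the corresponding generics, so $\pi(p,q^*) \in G$ and $q^*$ belongs to the quotient as a lower bound for $X$.

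The main obstacle will be the careful bookkeeping needed to chase directed infima through the composite projection, especially where termspace forcing and the Shioya embedding reshape the underlying forcings. However, each individual step merely reinterprets the same partial-function conditions in a different ordering, so no new directed sets are generated along the way, and the preservation of infima ultimately reduces to the explicit formula for directed meets in $\rcol(\mu,\delta)$ already verified in the proof of Lemma \ref{closure} and the calculation underpinning Lemma \ref{projs}(2).
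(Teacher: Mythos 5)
The paper does not actually supply a proof here; it only asserts that ``a close examination of the construction of the above projection reveals'' the proposition, so there is no written argument to compare against. Your proof is doing precisely the close examination the paper has in mind, and the skeleton is right: use $\nu$-distributivity of the image (which you correctly deduce by stepping along the iteration, noting that $\nu$-distributivity preserves $\nu$-closure of the later factors) to pull the directed set $X$ back into $V$, use Lemma~\ref{closure} to obtain $\inf X$ in $\rcol(\mu,\delta)$, push the infimum through the constituent maps via Lemma~\ref{projs}(2), and conclude by genericity.

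One claim is stated a bit too strongly: a dense embedding $d : P \to Q$ need not send an infimum in $P$ to the infimum of the image in $Q$ (another lower bound in $Q\setminus d[P]$ need not lie below $d(\inf X)$). What is true, and what the argument actually needs, is that $d$ is order-preserving and order-reflecting, so $d(\inf X)$ is a lower bound of $d[X]$, and by density any other lower bound of $d[X]$ is compatible with it; combined with the fact that all the forcings involved are (iterations of) posets of partial functions, in which directed sets have unions as greatest lower bounds and any condition compatible with every member of a directed set is compatible with its union, the final density step showing $\pi(p,\inf X)\in G$ does go through. You flag this bookkeeping yourself, so the gap is one of precision rather than of substance, and the argument matches the paper's implicit proof.
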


\section{Saturated rigid ideals}
\label{idealsec}
Let us recall some basic facts about saturated ideals, proof of which can be found in \cite{foremanhandbook}.  An ideal $I$ on $Z \subseteq \p(\lambda)$ is \emph{normal} when for all sequences $\la A_\alpha : \alpha < \lambda \ra \subseteq I$, the diagonal union, $\nabla_{\alpha<\lambda} := \{ z : (\exists \alpha) \alpha \in z \in A_\alpha \}$, is in $I$ as well.  Least upper bounds in the boolean algebra $\p(Z)/I$ are given by diagonal unions.  Therefore, if $\p(Z)/I$ has the $\lambda^+$-chain condition---synonymously, $I$ is saturated---then $\p(Z)/I$ is a complete boolean algebra.  Whenever $G \subseteq \p(Z)/I$ is generic, then the generic ultrapower $V^Z/G$ is well-founded and closed under $\lambda$-sequences from $V[G]$.  ($I$ is called \emph{precipitous} it always yields well-founded generic ultrapowers.)

A cardinal $\kappa$ is called \emph{huge} if it is the critial point of an elementary embedding $j : V \to M$, where $M$ is a transitive class such that $M^{j(\kappa)} \subseteq M$.  A cardinal is called \emph{almost-huge} when we only require $M^{<j(\kappa)} \subseteq M$.  We will need the following facts about almost-huge embeddings, which can be found in \cite{kanamori}:

\begin{lemma}
\label{ahmin}
Suppose $\kappa$ is almost-huge, witnessed by an embedding sending $\kappa$ to $\delta$.  Then there is an elementary $j : V \to M$ with the following properties:
\begin{enumerate}
\item The embedding is generated by a tower of measures $T \subseteq V_\delta$, which we will call a \emph{$(\kappa,\delta)$-tower}.
The fact that $T$ generates such an embedding is equivalent to a first-order property of $( V_\delta, \in, T )$.
\item $\crit j = \kappa$, $j(\kappa) = \delta$, and $M^{<\delta} \subseteq M$.
\item $\sup j[\delta] = j(\delta) < \delta^+$.
\end{enumerate}
\end{lemma}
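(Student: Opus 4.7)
The plan is to extract from an arbitrary witnessing embedding a canonical tower of measures, show it generates a new embedding $j$ with the desired properties, and verify each clause. To begin, fix any almost-huge embedding $j_0 : V \to M_0$ with $\crit j_0 = \kappa$, $j_0(\kappa) = \delta$, and $M_0^{<\delta} \subseteq M_0$. Since $\delta$ is inaccessible in $V$, $|V_\delta| = \delta$. For each $a \in [\delta]^{<\delta}$ define
$$U_a = \{ X \subseteq [\sup a]^{|a|} : j_0[a] \in j_0(X) \},$$
a $\kappa$-complete ultrafilter on $[\sup a]^{|a|}$. Since $|[\sup a]^{|a|}| < \delta$, each $U_a \in V_\delta$, and the tower $T = \la U_a : a \in [\delta]^{<\delta} \ra$ is coherent, fine, and normal by routine derivations from the definition.

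Form the direct limit $M$ of the system $\mathrm{Ult}(V, U_a)$ along the natural connecting maps $[f]_a \mapsto [f \circ \pi_{ba}]_b$ for $a \subseteq b$. The factor map $[f]_a \mapsto j_0(f)(j_0[a])$ embeds $M$ into $M_0$, so $M$ is well-founded; after Mostowski collapse we obtain an elementary $j : V \to M$. Standard arguments yield $\crit j = \kappa$ and $j(\kappa) = \delta$. For $M^{<\delta} \subseteq M$, any $<\delta$-sequence of members of $M$ has representatives $\la (f_\xi, a_\xi) : \xi < \theta \ra$ with $\theta < \delta$; using $<\delta$-directedness of $[\delta]^{<\delta}$ we pick $a \supseteq \bigcup_\xi a_\xi$ and code the sequence as a single function on $[\sup a]^{|a|}$, establishing clause (2).

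For clause (3), every $\beta < j(\delta)$ has the form $[f]_a$ for some $f : [\sup a]^{|a|} \to \delta$. Since $|\dom f| < \delta$ and $\delta$ is regular, $\ran f$ is bounded by some $\alpha < \delta$, so $\beta \leq j(\alpha) \in j[\delta]$; this yields $\sup j[\delta] = j(\delta)$. The same representation shows that each such pair $(f, a)$ lies in $V_\delta$, whence $|j(\delta)| \leq |V_\delta| = \delta$ and therefore $j(\delta) < \delta^+$.

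For clause (1), the conditions on $T$ that make the construction above succeed --- coherence, fineness, normality, $\kappa$-completeness of each $U_a$, and well-foundedness of the direct limit --- are all first-order expressible in $(V_\delta, \in, T)$. The main subtlety is well-foundedness, which is normally a $\Pi^1_1$ statement, but for a tower of $\omega_1$-complete ultrafilters it reduces to an assertion about countable descending sequences of representatives $(f_n, a_n) \in V_\delta$ and is thereby first-order. This equivalence is the principal technical point of the proof.
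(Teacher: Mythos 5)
The paper does not prove Lemma \ref{ahmin}; it cites Kanamori, where essentially this statement appears as the characterization of almost-hugeness via coherent towers of normal ultrafilters on $\p_\kappa\alpha$ for $\alpha<\delta$. Your high-level plan matches that standard treatment --- derive a tower from a witnessing embedding, pass to the direct limit, verify each clause using the inaccessibility and regularity of $\delta$ --- but there is a genuine error in your definition of the measures that undermines the rest.

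You set $U_a = \{ X \subseteq [\sup a]^{|a|} : j_0[a] \in j_0(X) \}$ for $a \in [\delta]^{<\delta}$. For $U_a$ to be a (nonempty) ultrafilter on $[\sup a]^{|a|}$, the seed must lie in the right space: we need $j_0[a] \in j_0([\sup a]^{|a|}) = [j_0(\sup a)]^{j_0(|a|)}$. But $j_0$ is order-preserving, so $j_0[a]$ has order type $\ot(a)$, while the target space consists of sets of order type $j_0(|a|)$; whenever $|a| \geq \kappa = \crit j_0$ we have $j_0(|a|) > |a| \geq \ot(a)$, so $j_0[a]$ is the wrong size and $U_a$ as written is empty, not an ultrafilter. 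These are precisely the indices your clause-(2) argument needs: to code a $\theta$-sequence with $\kappa \leq \theta < \delta$ into a single $[F]_a$, you take $a \supseteq \bigcup_\xi a_\xi$ with $|a| \geq \theta \geq \kappa$, and that $U_a$ does not exist. Conversely, restricting to $|a|<\kappa$ makes the measures well-defined but destroys the $<\delta$-closure of the limit model. The fix is the one in Kanamori: index the tower by ordinals $\alpha<\delta$ and take $U_\alpha$ to be the normal measure on $\p_\kappa\alpha$ with seed $j_0[\alpha]$. Since $|j_0[\alpha]| = |\alpha| < \delta = j_0(\kappa)$, we get $j_0[\alpha] \in (\p_{j_0(\kappa)}(j_0(\alpha)))^{M_0} = j_0(\p_\kappa\alpha)$ for every $\alpha<\delta$, regardless of whether $\alpha\geq\kappa$, and normality then gives the representability needed to pack a $<\delta$-sequence into a single equivalence class via a diagonal argument. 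Your remaining points --- boundedness of ranges using regularity of $\delta$ for clause (3), and the reduction of well-foundedness to the nonexistence of countable descending chains of representatives lying in $V_\delta$ for clause (1) --- are correct and carry over essentially unchanged once the tower is set up properly.
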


The following is proven by standard reflection arguments:

\begin{proposition}
\label{tourney}
 If $\kappa$ is huge, then there is an unbounded set $A \subseteq \kappa$ such that for every $\alpha < \beta$ in $A$, there is an $(\alpha,\beta)$-tower.
\end{proposition}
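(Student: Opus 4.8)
The plan is to derive the statement from the $(\kappa,\delta)$-tower characterization in Lemma \ref{ahmin} together with a standard reflection argument in the spirit of the usual proof that a huge cardinal is preceded by many measurable or almost-huge cardinals. Fix the huge embedding $j : V \to M$ with $\crit j = \kappa$, $j(\kappa) = \delta$, and $M^\delta \subseteq M$. The key observation is that, by Lemma \ref{ahmin}(1), the assertion ``there is a $(\kappa,\delta)$-tower'' is equivalent to a first-order statement over the structure $(V_\delta, \in, T)$ for a suitable $T \subseteq V_\delta$, hence is witnessed by an object in $V_{\delta+1} \subseteq H_{\delta^+}$. In particular it is a statement that $M$ can see, since $V_{\delta+1}^M = V_{\delta+1}$ by the closure of $M$ (indeed $V_\delta \subseteq M$ and every subset of $V_\delta$ lies in $M$).

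First I would check that in $M$ there is a $(\kappa, \delta)$-tower, i.e.\ that $\kappa$ is almost-huge as witnessed by an embedding with critical point $\kappa$ sending $\kappa$ to $\delta$, from the point of view of $M$. This is where one uses the hugeness more essentially: by restricting $j$, one obtains from the huge embedding an almost-huge (in fact huge) embedding with critical point $\kappa$ and target $\delta$ whose derived tower of normal ultrafilters on the sets $[\delta]^{<\delta}$-pieces (more precisely, the measures $U_a = \{ x \subseteq [\delta]^{|a|} : j``a \in j(x)\}$ for $a \in [\delta]^{<\delta}$) lies in $V_\delta$, and each such measure and the coherence conditions among them are in $M$ because $M$ is closed under $\delta$-sequences. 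So $M \models$ ``there is a $(\kappa,\delta)$-tower.''

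Next I would run the reflection. Since $j(\kappa) = \delta$, the statement ``there is a $(\kappa, j(\kappa))$-tower'' holds in $M$ with $\kappa < j(\kappa)$, and $\kappa \in j[\kappa]$ trivially is not quite what we want — rather, apply elementarity downward: the statement ``$\exists \alpha < \kappa$ such that there is an $(\alpha, \kappa)$-tower'' is the preimage under $j$ of ``$\exists \alpha < \delta$ such that there is an $(\alpha, \delta)$-tower'', which holds in $M$ as witnessed by $\alpha = \kappa$. Hence by elementarity there is $\alpha_0 < \kappa$ with an $(\alpha_0, \kappa)$-tower in $V$. To get the full unbounded set $A \subseteq \kappa$, I would iterate this reflection: given any $\beta < \kappa$, the statement ``$\exists \alpha$ with $\beta < \alpha < \delta$ and an $(\alpha, \delta)$-tower'' holds in $M$ (witnessed by $\alpha = \kappa > \beta$ for any $\beta < \kappa$, using $\crit j = \kappa$ so $j(\beta) = \beta$), so by elementarity for each $\beta < \kappa$ there is $\alpha \in (\beta, \kappa)$ carrying an $(\alpha, \kappa)$-tower. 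Collecting these gives an unbounded $A \subseteq \kappa$ such that every $\alpha \in A$ admits an $(\alpha, \kappa)$-tower; but then for $\alpha < \beta$ both in $A$, applying the same argument inside $V_\beta$ (or reflecting the two-cardinal statement ``$\exists \alpha < \beta$ with an $(\alpha, \beta)$-tower'' which holds for cofinally many $\beta$), one thins $A$ to arrange that every pair $\alpha < \beta$ in $A$ has an $(\alpha, \beta)$-tower. Concretely: let $A$ be the set of $\beta < \kappa$ such that $\{ \alpha < \beta : \text{there is an } (\alpha,\beta)\text{-tower}\}$ is unbounded in $\beta$ and moreover reflects correctly; a Löwenheim–Skolem/elementary-submodel argument shows $A$ is unbounded, and a further diagonal thinning gives the two-sided property.

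The main obstacle I anticipate is the bookkeeping in the last thinning step — ensuring simultaneously that $A$ is unbounded and that \emph{every} pair from $A$, not merely cofinally many, carries a tower. The clean way to handle this is to first establish that $C = \{\beta < \kappa : \beta$ is inaccessible and for unboundedly many $\alpha < \beta$ there is an $(\alpha,\beta)$-tower, and this is correctly computed in $V_\beta\}$ is unbounded (by reflecting the corresponding statement about $\kappa$ and $\delta$ down through $j$), and then to build $A \subseteq C$ recursively, at stage $i$ choosing $\beta_i \in C$ above $\sup_{k<i}\beta_k$ large enough that $(\alpha,\beta_i)$-towers exist for a cofinal-in-$\beta_i$ set of $\alpha$ that can be taken to include all previously chosen $\beta_k$ — possible because ``there is an $(\beta_k, \beta)$-tower'' holds for cofinally many $\beta < \kappa$, again by reflection. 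This yields $A = \{\beta_i : i < \kappa\}$ with the required property, completing the proof.
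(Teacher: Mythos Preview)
The paper gives no proof of this proposition beyond the phrase ``standard reflection arguments,'' so there is nothing to compare against directly; your outline is indeed along the lines of the intended standard argument. The first part---observing that $M^\delta \subseteq M$ implies $V_{\delta+1}^M = V_{\delta+1}$, so $M$ sees the $(\kappa,\delta)$-tower, and then reflecting to get unboundedly many $\alpha<\kappa$ with an $(\alpha,\kappa)$-tower---is correct and cleanly stated.

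The gap is in your recursive construction of $A$. You argue that for each previously chosen $\beta_k$, the set of $\beta<\kappa$ admitting a $(\beta_k,\beta)$-tower is cofinal in $\kappa$, and then want to pick a single $\beta_i$ above $\sup_{k<i}\beta_k$ that works for all $k<i$ at once. But an intersection of $<\kappa$ many cofinal sets need not be cofinal (or even nonempty above a given bound), so this step as written does not go through. The detour through the auxiliary set $C$ does not help either: membership in $C$ is a downward condition on $\beta$ and says nothing about whether the \emph{specific} earlier points $\beta_k$ admit towers up to $\beta$, nor does it record that $\beta$ itself has a $(\beta,\kappa)$-tower, which you need as an inductive invariant.

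The fix is to reflect the conjunction in one stroke. Maintain the invariant that each $\beta_k$ carries a $(\beta_k,\kappa)$-tower. At stage $i$, note that in $M$ the ordinal $\kappa$ simultaneously satisfies: $\kappa > \sup_{k<i}\beta_k$; for every $k<i$ there is a $(\beta_k,\kappa)$-tower (since these towers lie in $V_\kappa = V_\kappa^M$); and there is a $(\kappa,j(\kappa))$-tower. The parameter $\{\beta_k : k<i\}$ is a subset of $\kappa$ of size $<\kappa$, hence fixed by $j$. Elementarity then yields $\beta_i<\kappa$ in $V$ with all three properties, and the recursion continues. This replaces your last paragraph with a two-line argument and removes the need for $C$ entirely.
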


 \begin{lemma}
 \label{idealfromah}
 Suppose $\mu < \kappa \leq \lambda < \delta$, $\mathbb P$ and $\dot{\mathbb R}$ are as in the hypothesis of Lemma \ref{proj2}.  Suppose additionally there is a $(\kappa,\delta)$-tower and $(\mathbb P \times \rcol(\mu,\kappa)) * \dot{\mathbb R}$ preserves the regularity of some $\gamma \in [\kappa,\lambda]$.  Then there is a projection
 $$\pi : \mathbb P \times \rcol(\mu,\delta) \to (\mathbb P \times \rcol(\mu,\kappa)) * \dot{\mathbb R} * (\dot{\col}(\gamma,\lambda) \times \dot{\rcol}(\lambda,\delta)),$$
and whenever $G$ is generic for the righthand side, then in $V[G]$ there is a normal $\kappa$-complete ideal $I$ on $\p_\kappa\gamma$ such that $\p(\p_\kappa\gamma)/I \cong (\mathbb P \times \rcol(\mu,\delta))/G.$
\end{lemma}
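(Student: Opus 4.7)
First I would establish the projection by extending the construction of Corollary~\ref{proj2}. That corollary provides a projection $\mathbb P \times \rcol(\mu,\delta) \to (\mathbb P \times \rcol(\mu,\kappa)) * \dot{\col}(\kappa,\lambda) * \dot{\rcol}(\lambda,\delta)$; using $\sigma$ to factor $\dot{\col}(\kappa,\lambda)$ as $\dot{\mathbb R} * (\dot{\col}(\kappa,\lambda)/\dot{\mathbb R})$, it suffices to absorb an additional $\dot{\col}(\gamma,\lambda)$-factor immediately after $\dot{\mathbb R}$. The termspace $T((\mathbb P \times \rcol(\mu,\kappa)) * \dot{\mathbb R}, \dot{\col}(\gamma,\lambda))$ is $\gamma$-closed of size $\lambda$ in $V$, hence dense-equivalent to $\col(\gamma,\lambda)$ by Lemma~\ref{folk}; after duplicating off a further copy of $\col(\mu,\lambda)$ from the expansion of $\rcol(\mu,\delta)$, exactly as in the proof of Corollary~\ref{proj2}, and applying Lemma~\ref{termspace} to this termspace, one obtains the required $\pi$.

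Next, Lemma~\ref{ahmin} extracts from the $(\kappa,\delta)$-tower an embedding $j : V \to M$ with $\crit j = \kappa$, $j(\kappa) = \delta$, and $M^{<\delta} \subseteq M$. Since $|\mathbb P| < \kappa$, $j$ fixes $\mathbb P$ pointwise, and since $V_\delta^M = V_\delta$, we have $j(\mathbb P \times \rcol(\mu,\kappa)) = \mathbb P \times \rcol(\mu,\delta)$. Let $G$ be generic for the right-hand side of $\pi$, let $H$ be $V[G]$-generic for the quotient $(\mathbb P \times \rcol(\mu,\delta))/G$, let $K := G * H$ be the induced $V$-generic for $\mathbb P \times \rcol(\mu,\delta)$, and set $G_0 := K \cap (\mathbb P \times \rcol(\mu,\kappa))$. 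Because $j$ is the identity on $V_\kappa$, $j[G_0] = G_0 \subseteq K$, so $j$ lifts inside $V[K]$ to $j^* : V[G_0] \to M[K]$ with $j^*(G_0) = K$; put $\zeta := j^*[\gamma] \in \p_\delta j(\gamma)$.

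Working in $V[G]$, with canonical $(\mathbb P \times \rcol(\mu,\delta))/G$-names $\dot\zeta$ and $\dot j^*$, define
\[
I = \{ A \in \p(\p_\kappa\gamma)^{V[G]} : \Vdash_{(\mathbb P \times \rcol(\mu,\delta))/G} \dot\zeta \notin \dot j^*(\check A) \}.
\]
Normality and $\kappa$-completeness of $I$ are routine from $\crit j^* = \kappa$. The map $\Phi : [A]_I \mapsto \|\dot\zeta \in \dot j^*(\check A)\|$ is a well-defined complete Boolean embedding of $\p(\p_\kappa\gamma)/I$ into $(\mathbb P \times \rcol(\mu,\delta))/G$, and the lemma reduces to showing the image of $\Phi$ is dense. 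This density is the main obstacle: given a nonzero condition $q$ in the quotient, we must exhibit some $A \in V[G]$ with $\Phi([A]_I) \leq q$. The standard route is a master-condition argument, carried out in a further extension and using that $T \in V_\delta$ is unchanged by all relevant forcings together with the directed-closure of $\rcol(\mu,\delta)/G_0$ over $V[G_0]$ given by Proposition~\ref{closedquotient}, so that the coordinates of $K$ lying outside $G_0$ can be arranged to decide $q$'s trace on a pre-chosen subset $A$ of $\p_\kappa\gamma$. Once this density is verified, $\Phi$ is an isomorphism and $\p(\p_\kappa\gamma)/I \cong (\mathbb P \times \rcol(\mu,\delta))/G$ as required.
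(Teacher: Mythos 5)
Your construction of the projection $\pi$ is in the same spirit as the paper's (both absorb the $\col(\gamma,\lambda)$-factor via Lemmas~\ref{folk} and~\ref{termspace}), though the paper does it more directly with a dense embedding $d : \col(\kappa,\lambda) \to \col(\kappa,\lambda) \times (\mathbb R * \dot{\col}(\gamma,\lambda))$ in $V^{\mathbb P \times \rcol(\mu,\kappa)}$. The serious gap is in the ideal construction. You lift $j$ only to $j^* : V[G_0] \to M[K]$, where $G_0$ is the $\mathbb P \times \rcol(\mu,\kappa)$-part, but the ideal lives in $V[G]$ for $G$ generic over the \emph{full} iteration $(\mathbb P \times \rcol(\mu,\kappa)) * \dot{\mathbb R} * (\dot{\col}(\gamma,\lambda) \times \dot{\rcol}(\lambda,\delta))$, and your definition
\[
I = \{ A \in \p(\p_\kappa\gamma)^{V[G]} : \Vdash \dot\zeta \notin \dot j^*(\check A) \}
\]
is ill-posed for $A \in V[G] \setminus V[G_0]$ since $j^*$ is not defined on such $A$. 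You must first extend the embedding through $\dot{\mathbb R}$, $\dot{\col}(\gamma,\lambda)$, and $\dot{\rcol}(\lambda,\delta)$ to obtain $j : V[G] \to M[\hat G * \hat h_0 * (\hat h_1 \times \hat H)]$; this is the technical heart of the lemma. The paper does it via two master-condition arguments: taking $q^* = \inf j[h]$ in the $\delta$-directed-closed $\col(\delta,j(\lambda))^{M[\hat G]}$ (where $h \subseteq \col(\kappa,\lambda)^{V[G_0]}$ projects to $h_0 * h_1$) and building $\hat h$ in $V[\hat G]$ below $q^*$; and then, using the master conditions $m_\alpha = \inf j[H \restriction \alpha^3]$, threading a descending $\delta$-chain through the $j(\delta)$-many maximal antichains of $\rcol(j(\lambda),j(\delta))^{M[\hat G * \hat h_0]}$ to produce $\hat H \supseteq j[H]$. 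Your reference to Proposition~\ref{closedquotient} and the directed closure of $\rcol(\mu,\delta)/G_0$ over $V[G_0]$ is not what is needed here; the relevant closure is of the $j$-images of the collapses inside $M[\hat G]$.

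The density argument is also underspecified and not right in substance. It is not about arranging coordinates of $K$ to ``decide $q$'s trace on a pre-chosen $A$.'' Rather, after forcing with $\p(\p_\kappa\gamma)/I$ over $V[G]$ to obtain the generic ultrapower $i : V[G] \to N$ and the factor embedding $k : N \to M[\hat G * \hat h_0 * (\hat h_1 \times \hat H)]$ given by $k([f]_U) = j(f)(j[\gamma])$, one observes that $\mu$ is fixed by $j$ and $\delta = (\mu^+)^N = i(\kappa) = j(\kappa)$, so $\crit(k) > \delta$; consequently $i(G) = j(G) = \hat G$, and the quotient generic $\hat G$ is already present in $V[G][U]$ with no further forcing needed. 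For any condition $p$ in the quotient, one then chooses $U$ producing $\hat G \ni p$, and any $[X]_I$ forcing $p \in \hat G$ yields $e([X]_I) \le p$. Without this critical-point argument, there is no reason the image of your map $\Phi$ should be dense.
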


\begin{proof}
In $V^{\mathbb P \times \rcol(\mu,\kappa)}$, there is a dense embedding $d : \col(\kappa,\lambda) \to \col(\kappa,\lambda) \times (\mathbb R * \dot{\col}(\gamma,\lambda))$.  Combining this with the projection of Lemma \ref{proj2} gives the desired projection.
Suppose $$ G * h_0 * (h_1 \times H) \subseteq (\mathbb P \times \rcol(\mu,\kappa)) * \dot{\mathbb R} * (\dot{\col}(\gamma,\lambda) \times \dot{\rcol}(\lambda,\delta))$$ is generic over $V$.
We may force further to produce $\hat G$ that is $\mathbb P \times \rcol(\mu,\delta)$-generic and projects to $G * h_0 * (h_1 \times H)$.  If $j : V \to M$ is an almost-huge embedding generated by a $(\kappa,\delta)$-tower, then we may extend the embedding to $j : V[G] \to M[\hat G]$.  By the $\delta$-c.c.\ of $\rcol(\mu,\delta)$, $M[\hat G]^{<\delta} \cap V[\hat G] \subseteq M[\hat G]$.
 
$\hat G$ absorbs a generic $h \subseteq \col(\kappa,\lambda)^{V[G]}$ that projects to $h_0 * h_1$.  Since $\col(\delta,j(\lambda))^{M[\hat G]}$ is $\delta$-directed closed, and $j[h]$ is a directed subset of size $<\delta$, we can take a lower bound $q^* \in \col(\delta,j(\lambda))^{M[\hat G]}$.  Since $M[\hat G] \models 2^{j(\lambda)} < j(\delta)$, and $j(\delta) < (\delta^+)^V$, we can build a generic $\hat h$ for $\col(\delta,j(\lambda))^{M[\hat G]}$ in $V[\hat G]$ below $q^*$.  This projects to an $M[\hat G]$-generic $\hat h_0 * \hat h_1 \subseteq j(\mathbb R) * \dot{\col}(j(\gamma),j(\lambda))$, and thus we may extend the embedding again to $j : V[G * h_0 * h_1] \to M[\hat G * \hat h_0 * \hat h_1]$.
 
 For each regular $\alpha \in (\lambda,\delta)$, let $m_\alpha = \inf j[H {\restriction} \alpha^3]$.  The proof of Lemma \ref{closure} shows that $m_\alpha$ is given by $\bigcup \{ j(p) : p \in H \restriction \alpha \} \setminus B$, where $B$ is the set of coordinates where there is some disagreement between two of the partial functions.  Thus we may assume $m_\alpha  = m_\beta \restriction j(\alpha)^3$ for $\alpha < \beta$.  Since $M[\hat G * \hat h_0] \models \rcol(j(\lambda),j(\delta))$ is $j(\delta)$-c.c., and $j(\delta)<(\delta^+)^V$, we can enumerate all maximal antichains of this partial order in $V[\hat G]$ as $\la A_\alpha : \alpha <\delta \ra$.  For each $A_\alpha$, there is a regular $\beta_\alpha < \delta$ such that $A_\alpha \subseteq \rcol(j(\lambda),j(\beta_\alpha))^{M[\hat G * \hat h_0]}$.  We may assume the $\beta_\alpha$'s are increasing.  We can inductively build a descending chain $\la q_\alpha : \alpha < \delta \ra \subseteq \rcol(j(\lambda),j(\delta))^{M[\hat G * \hat h_0]}$ such that: 
 \begin{enumerate}
  \item For each $\alpha$, $q_\alpha \in \rcol(j(\lambda),j(\beta_\alpha))^{M[\hat G * \hat h_0]}$.
  \item For each $\alpha$, $q_\alpha \supseteq m_{\beta_\alpha}$, and $q_\alpha \leq r$ for some $r \in A_\alpha$.
  \item For $\alpha < \alpha'$, $q_\alpha \subseteq q_{\alpha'}$. 
   \end{enumerate}
  Suppose the construction has proceeded up to $\xi$.  For each $i < \xi$, $m_{\beta_\xi} \restriction \beta_i^3 = m_{\beta_i} \subseteq q_i$, and thus $q'_\xi := \bigcup_{i<\xi} q_i \cup m_{\beta_\xi}$ is a condition.  Find some $r \in A_\alpha$ compatible with $q'_\xi$, and let $q_\xi \supseteq q'_\xi$ be stronger than $r$.  In the end, this generates a filter $\hat H$ which is generic over $M[\hat G * \hat h_0 * \hat h_1]$ and includes $j[H]$.  Thus we can extend the embedding to $j : V[G*h_0 * (h_1 \times H)] \to M[\hat G*\hat h_0*(\hat h_1 \times \hat H)]$.
  
 In $V[G*h_0 * (h_1 \times H)]$, we define a normal ideal $I$ by $X \in I$ iff $\Vdash j[\gamma] \notin j(X)$. The map $e : [X]_I \mapsto || j[\gamma] \in j(X) ||$ is easily seen to be an embedding of $\p(\p_\kappa\gamma)/I$ into $\mathcal B(\frac{\mathbb P \times \rcol(\mu,\delta)}{G*h_0*(h_1\times H)})$.  Since the latter is $\delta$-c.c. and $\delta=\gamma^+$, $I$ is saturated.  Let $\la A_\alpha : \alpha < \gamma \ra$ be a maximal antichain in $\p(\p_\kappa\gamma)/I$.  Then it is forced that $ j[\gamma] \in j(\nabla_{\alpha<\gamma} A_\alpha)$, so by the definition of diagonal unions it is forced that for some $\alpha < \gamma$, $j[\gamma] \in j(A_\alpha)$.  Thus $e$ is a regular embedding.
 
 If $U \subseteq \p(\p_\kappa\gamma)/I$ is generic over $V[G*h_0*(h_1\times H)]$, then there is an elementary embedding $i : V[G*h_0*(h_1\times H)] \to N$, where $N$ is transitive.  By further forcing, we obtain an extension of the ground-model embedding $j$ as above, and we have an elementary embedding $k : N \to M[\hat G*\hat h_0 *(\hat h_1 \times \hat H)]$ defined by $k([f]_U) = j(f)(j[\gamma])$.  The definition of $e$ guarantees that $k$ is elementary, and clearly $j(x) = k(i(x))$ for all $x \in V[G*h_0*(h_1\times H)]$.
 
 Since $\mu$ is fixed by $j$ and $\delta = (\mu^+)^N = i(\kappa) = j(\kappa)$, we have $\crit(k) > \delta.$  Therefore, whenever $U \subseteq \p(\p_\kappa\gamma/I)$ is generic and we proceed to produce $j$ and $k$, we have $i(G) = j(G) = \hat G$ as above, so no further forcing is needed to get a generic for $\frac{\mathbb P \times \rcol(\mu,\delta)}{G*h_0*(h_1\times H)}$.  Furthermore, for every condition $p$ in the quotient forcing, a generic $U$ can be taken yielding a generic $\hat G$ with $p \in \hat G$.  If $[X]_I$ forces $p \in \hat G$, then $e([X]_I) \leq p$.  Thus $e$ is a dense embedding, and the conclusion follows.
 \end{proof}

We are now ready for:
\begin{proof}[Proof of Theorem \ref{global}]
Suppose $\theta$ is huge, and let $A \subseteq \theta$ be as in Proposition \ref{tourney}.  Let $\kappa$ be the first inaccessible limit point of $A$, and let $\la \alpha_i : i < \kappa \ra$ be the increasing enumeration of the closure of $A \cap \kappa \cup \{\omega \}$.

We define the following Easton-support iteration $\mathbb P = \la \mathbb P_i,\dot{\mathbb Q}_i : i < \kappa \ra$.  As usual, $\mathbb P_0$ is the trivial partial order.  Let $\mathbb P_1 = \mathbb Q_0 = \rcol(\omega,\alpha_1)$.   If $i$ is a successor ordinal, let $\mathbb P_{i+1} = \mathbb P_i * \dot{\mathbb Q}_i = \mathbb P_i * \dot{\rcol}(\alpha_i,\alpha_{i+1})$.

If $i<\kappa$ is a limit ordinal, then $\alpha_i$ is singular.  Let $\lambda_i$ be the first inaccessible in $(\alpha_i,\alpha_{i+1})$.  Let $\dot{\mathbb Q_i}$ be a $\mathbb P_i$-name for $\col(\alpha_i^+,\lambda_i) \times \rcol(\lambda_i,\alpha_{i+1})$. 

The following are easy to see:
\begin{enumerate}
\item If $i$ is a successor, $\mathbb P_i$ is $\alpha_i$-c.c., and $\mathbb P / \mathbb P_i$ is $\alpha_i$-closed.
\item If $i$ is a limit, $\mathbb P_i$ is $\lambda_i$-c.c., and $\mathbb P / \mathbb P_i$ is $\alpha_i^+$-closed.
\item $\mathbb P$ preserves the inaccessbility of $\kappa$.
\item $\mathbb P$ forces that the set of infinite cardinals below $\kappa$ is $\{ \alpha_i : i < \kappa \} \cup \{ \alpha_i^+ : i < \kappa$ is a limit$\}$.
\end{enumerate}

Let $i < j < \kappa$ be successor ordinals.  
Since $|\mathbb P_{i-1}|<\alpha_i$, it preserves the existence of an $(\alpha_i,\alpha_j)$-tower.  In $V^{\mathbb P_{i-1}}$, the forcing $\mathbb P_j / \mathbb P_{i-1}$ takes the form:
$$(\mathbb Q \times \rcol(\alpha_{i-1},\alpha_i)) * \dot{\mathbb R} * (\dot{\col}(\gamma,\lambda) \times \dot{\rcol}(\lambda,\alpha_j)),$$
where $|\mathbb Q| < \alpha_i$, $\dot{\mathbb R}$ is forced to be $\alpha_i$-closed and $\lambda$-c.c., $\gamma \leq \lambda$ is forced to be regular, and $\lambda < \alpha_j$ is inaccessible in the ground model.  (If $i-1$ is a limit, $\mathbb Q = \col(\alpha_{i-1}^+,\lambda_{i-1})$, and $\mathbb Q$ is trivial otherwise.  If $j-1$ is a limit, then $\gamma = \alpha_{j-1}^+$ and $\lambda = \lambda_{j-1}$; otherwise $\gamma = \lambda = \alpha_{j-1}$.)  Thus Lemma \ref{idealfromah} applies, and $\mathbb P_j$ forces that there is a normal $\alpha_i$-complete ideal $I$ on $\p_{\alpha_i}\gamma$ with quotient algebra equivalent to $(\mathbb Q \times \rcol(\alpha_{i-1},\alpha_j))/G$, where $G$ is the generic for $\mathbb P_j/ \mathbb P_{i-1}$.

Suppose that, in $V^{\mathbb P_j}$, there is a nontrivial automorphism of $\p(\p_{\alpha_i}\gamma)/I$.  Let $V' = V^{\mathbb P_{i-1}}$.  Forcing with $\p(\p_{\alpha_i}\gamma)/I$ and applying the automorphism would produce a $\mathbb Q \times \rcol(\alpha_{i-1},\alpha_j)$-extension of $V'$ with two distinct generics, $g \times G$ and $g' \times G'$.  Since $\mathbb Q$ is contained in the projection of the forcing to $\mathbb P_j/\mathbb P_{i-1}$, we have $g = g'$.  Lemma \ref{robust} implies that $V'[g][G]$ has only one filter which is $\rcol(\alpha_{i-1},\alpha_j)^{V'}$-generic over $V'[g]$, so $G = G'$, contradicting the assumption.

Lemma \ref{absoluteness} implies that the $\alpha_j$-closed quotient forcing $\mathbb P / \mathbb P_j$ preserves that the $\alpha_j$-sized boolean algebra $\p(\p_{\alpha_i}\gamma)/I$ is $\alpha_j$-c.c.\ and rigid.
Finally, Proposition \ref{closedquotient} justifies the claim that $\p(\p_{\alpha_i}\gamma)/I$ is equivalent to a $\nu$-closed forcing, where $\nu = \alpha_{i-1}^+$ if $i$ is a limit, and $\nu = \alpha_{i-1}$ otherwise.  Cutting the universe at $\kappa$ produces a model of Theorem \ref{global}.
\end{proof}

We would have a bit of an easier time if we did not concern ourselves with saturated ideals on $\p_\kappa\lambda$, for $\lambda$ successor of singular.  We could just let $\mathbb Q_i$ be $\rcol(\alpha_i^+,\alpha_{i+1})$ at limit $i$, and use a simpler version of Lemma \ref{projs}.

\section{Successors of singulars}
\label{singsec}

It is a well-known theorem of Laver \cite{laverindestructible} that the supercompactness of a cardinal $\kappa$ can be made indestructible under $\kappa$-directed-closed forcing.  An examination of his proof reveals the following more specific result, which we will use:

\begin{theorem}[Laver]
 Suppose $\kappa$ is supercompact.  There is an iteration $\mathbb I = \la \mathbb I_i, \dot{\mathbb J}_i : i < \kappa \ra \subseteq V_\kappa$ with the following property:  Whenver $G \subseteq \mathbb I$ is generic over $V$, $\mathbb Q$ is a $\kappa$-directed-closed forcing in $V[G]$ of size $\leq \lambda$, and $H \subseteq \mathbb Q$ is generic over $V[G]$, then in $V[G*H]$, there is a normal $\kappa$-complete ultrafilter $U$ on $\p_\kappa\lambda$ such that $j_U(G) \restriction (\kappa +1) = G *H$, and $j_U(\mathbb I)/(G*H)$ is $\lambda^+$-closed.
 \end{theorem}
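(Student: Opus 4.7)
The plan is to carry out Laver's standard preparation while tracking the extra information needed for the specific form of the conclusion.  Fix a Laver function $f : \kappa \to V_\kappa$ with the usual universality property, and additionally arrange that if $f(\alpha) \not= \emptyset$, then $f(\beta) = \emptyset$ for all inaccessibles $\beta$ in $(\alpha, \eta]$, where $\eta$ is the least inaccessible past $(|\trcl(f(\alpha))|)^+$; this keeps nontrivial stages well-separated and preserves the universality at any single target ordinal.  Define $\mathbb I = \la \mathbb I_i,\dot{\mathbb J}_i : i<\kappa \ra$ as the reverse Easton iteration in which, at each inaccessible $\alpha$ where $f(\alpha)$ is an $\mathbb I_\alpha$-name for an $\alpha$-directed-closed forcing, we set $\dot{\mathbb J}_\alpha := f(\alpha)$; at other stages $\dot{\mathbb J}_\alpha$ is trivial.

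Given $G$, $\mathbb Q$, $H$ as hypothesized, let $\dot{\mathbb Q}$ be an $\mathbb I$-name for $\mathbb Q$ of rank $\leq \lambda$, and choose a $\lambda$-supercompactness embedding $j : V \to M$ with $j(f)(\kappa) = \dot{\mathbb Q}$ and $M^\lambda \subseteq M$.  By elementarity, $j(\mathbb I)$ is the Laver iteration of length $j(\kappa)$ in $M$, and $j(\mathbb I) \restriction (\kappa+1) = \mathbb I * \dot{\mathbb Q}$, so that $G*H$ is generic for this initial segment over $M$.  The sparseness property of $f$ transfers via $j$, ensuring that $j(\mathbb I)$ is trivial on $(\kappa,\lambda^+]$; thus the tail $j(\mathbb I)/(G*H)$ is $\lambda^+$-closed in $M[G*H]$, and by the closure of $M[G*H]$ inside $V[G*H]$, also $\lambda^+$-closed there.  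Since $|j(\mathbb I)|^M < j(\kappa)$ and the relevant antichains can be enumerated in $V[G*H]$, one diagonalizes to build a generic filter $K$ for the tail over $M[G*H]$, and the standard argument lifts $j$ to $j : V[G*H] \to M[G*H*K]$.

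In $V[G*H]$, define $U = \{ X \in \p(\p_\kappa\lambda) : j[\lambda] \in j(X) \}$; standard arguments make this a normal $\kappa$-complete ultrafilter on $\p_\kappa\lambda$.  Letting $j_U : V[G*H] \to M_U$ be the ultrapower and $k : M_U \to M[G*H*K]$ the factor embedding defined by $k([g]_U) = j(g)(j[\lambda])$, we have $j = k \circ j_U$ and $\crit(k) > \lambda$.  Because $k$ is the identity on $M_U$-objects of rank below $\crit(k)$, the relationship $j_U(f)(\kappa) = \dot{\mathbb Q}$ transfers back, giving $j_U(G) \restriction (\kappa+1) = j(G) \restriction (\kappa+1) = G*H$; likewise the sparseness of $j_U(f)$ on $(\kappa,\lambda^+]$ yields that $j_U(\mathbb I)/(G*H)$ is $\lambda^+$-closed in $M_U$, and hence in $V[G*H]$ by the closure of $M_U$.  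The principal obstacle is securing this enhanced closure of the tail rather than the default $\kappa^+$-closure; this is precisely what the sparseness modification of the Laver function is designed to deliver, and the remainder of the argument is a routine application of the standard lifting machinery.
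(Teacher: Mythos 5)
The paper does not give a proof of this statement; it just cites Laver's original paper and remarks that the result follows from ``an examination of his proof.'' Your sketch is essentially the standard Laver argument, and the key insight --- that one must arrange sparseness of nontrivial stages so that the tail of $j(\mathbb I)$ past $\kappa$ is $\lambda^+$-closed rather than merely $\kappa^+$-closed, and that this sparseness together with the factor map $k : M_U \to M[\dots]$ having $\crit(k)>\lambda$ yields the stated conclusion about $j_U$ --- is exactly right. You implement the sparseness by pre-modifying the Laver function; Laver's original argument achieves the same effect by only forcing at stages that are ``closed under $f$'' (i.e., only at $\alpha$ with $f[\alpha]\subseteq V_\alpha$), which is a minor stylistic difference, not a substantive one.

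Two places are glossed over. First, after building $K$ generic for the tail, lifting $j$ to all of $V[G*H]$ requires in addition an $M[G*H*K]$-generic $\hat H \subseteq j(\mathbb Q)$ built below a master condition $\bigwedge j[H]$; your codomain ``$M[G*H*K]$'' for the lifted $j$ (and for $k$) omits $\hat H$, and without lifting through $j(\mathbb Q)$ the ultrafilter $U$ on $\p_\kappa\lambda$ is not defined on sets that first appear in $V[G*H]$. Second, ``one diagonalizes to build a generic filter $K$'' suppresses a real counting issue: with only a $\lambda$-supercompactness embedding, $j(\kappa)$ can be as large as $2^{\lambda^{<\kappa}}$, and the number of dense subsets of the tail lying in $M[G*H]$ need not be $\leq\lambda^+$; one needs GCH (or, more generally, a $\gamma$-supercompactness embedding for a larger $\gamma$ paired with a correspondingly more aggressive sparseness threshold than ``the least inaccessible past $(|\trcl(f(\alpha))|)^+$'') for the diagonalization to go through with the $\lambda$-closure of $M[G*H]$. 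Since this paper uses the statement in a GCH context, this is harmless here, but it should be flagged. Neither point is a conceptual gap; both are repairable by standard means.
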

 
 Start in a model $V'$ in which $\theta$ is huge and $\nu < \theta$ is regular and uncountable.  Let $A \subseteq \theta$ be as in Proposition \ref{tourney}.  Let $\mu>\nu$ be such that $V_\theta \models \mu$ is supercompact, and let $\eta$ be the first inaccessible limit point of $A$ above $\mu$.  Let $\la \alpha_i : i < \eta \ra$ be the increasing enumeration of the closure of $\{ \mu \} \cup (A \setminus \mu) \cap \eta$.  First force with Laver's partial order over $V'$ to obtain a model $V=V'[G]$.  This preserves all the $(\alpha,\beta)$-towers for $\alpha<\beta$ from $A \setminus (\mu+1)$.
 
 Next, we define an Easton-support iteration, $\mathbb P = \la \mathbb P_i,\dot{\mathbb Q}_i : i < \eta \ra$.  Let $\mathbb Q_0 = \rcol(\mu,\alpha_1)$.   If $i$ is a successor ordinal, let $\dot{\mathbb Q}_i$ be a $\mathbb P_i$-name for $\rcol(\alpha_i,\alpha_{i+1})$.  If $i$ is a limit ordinal, let $\lambda_i$ be the first inaccessible in $(\alpha_i,\alpha_{i+1})$, and let $\dot{\mathbb Q_i}$ be a $\mathbb P_i$-name for $\col(\alpha_i^+,\lambda_i) \times \rcol(\lambda_i,\alpha_{i+1})$.
 
 Suppose $\lambda$ is forced to be a regular cardinal in $(\mu,\eta)$ after this iteration, let $i$ be least such that $\alpha_i > \lambda$.  Put $\kappa = \alpha_1$ and $\delta = \alpha_i$.  It suffices to prove that there is a $\mathbb P_i$-name for an $\delta$-c.c.\ forcing $\dot{\mathbb Q}$ such that $\mathbb P_i * \dot{\mathbb Q}$ forces $\cf(\mu) = \nu$, and $\mathbb P_i * \dot{\mathbb Q}$ forces that there is a normal, $\kappa$-complete, rigid, saturated ideal on $\p_\kappa\lambda$.  For then, since $\mathbb Q$ is $\delta$-c.c.\ and $\mathbb P/\mathbb P_i$ is $\delta$-closed in $\mathbb P_i$, $\mathbb P/\mathbb P_i$ adds no subsets of $\lambda$ over $V^{\mathbb P_i * \dot{\mathbb Q}}$ by Easton's Theorem, and Lemma \ref{absoluteness} shows that the saturation and rigidity of the ideal is preserved by  $\mathbb P/\mathbb P_i$.

By Lemma \ref{idealfromah}, $\mathbb P_i$ forces that there is a normal $\kappa$-complete ideal $I_\lambda$ on $\p_{\kappa}\lambda$ such that $\p(\p_{\kappa}\lambda)/I_\lambda \cong \mathcal B(\rcol(\mu,\delta)/\mathbb P_i)$.  We use the following to analyze what happens to our ideal after further forcing:

\begin{theorem}[Foreman \cite{foremanduality}]
\label{dualitynicecase}
If $I$ is a $\kappa$-complete precipitous normal ideal on $Z \subseteq \p(\lambda)$ and $\mathbb{P}$ is $\kappa$-c.c.  Let $\bar I$ denote the ideal generated by $I$ in $V^{\mathbb P}$, and let $j$ denote the generic ultrapower embedding associated to forcing with $\p(Z)/I$.  There is an isomorphism $$\iota : \mathcal B(\mathbb{P} * \dot\p(Z)/ \bar{I}) \cong \mathcal B(\p(Z)/I * \dot{j(\mathbb{P})})$$ defined by $\iota(p,\dot X) = (1,\dot{j(p)}) \wedge || j[\lambda] \in j(\dot X) ||$.
\end{theorem}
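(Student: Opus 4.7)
The plan is to verify that $\iota$ extends to an order-preserving, separative, and dense embedding between the two posets modulo their respective equivalences; once this is established, the standard fact that any dense separative embedding induces a unique isomorphism of Boolean completions delivers the theorem.

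First I would record the preliminary preservation results. The $\kappa$-c.c.\ of $\mathbb P$ together with the $\kappa$-completeness and normality of $I$ guarantee that $\bar I$ remains a normal precipitous ideal in $V^{\mathbb P}$ and that $\bar I\cap V=I$ (any $\mathbb P$-name for a set in $\bar I$ reduces, by the chain condition, to a union of fewer than $\kappa$-many ground-model members of $I$). It then follows that whenever $(G,U)$ is $\mathbb P * \dot{\p}(Z)/\bar I$-generic over $V$, the restriction $U_0:=U\cap V$ is $\p(Z)/I$-generic over $V$; the generic ultrapower $\bar j\colon V[G]\to N$ associated with $U$ restricts on $V$ to the generic ultrapower $j\colon V\to M$ associated with $U_0$; and $\bar j(G)$ is $j(\mathbb P)$-generic over $M$ with $N=M[\bar j(G)]$. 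This dictionary $(G,U)\leftrightarrow (U_0,\bar j(G))$ is precisely what the formula for $\iota$ is encoding.

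Next I would check that $\iota$ is well-defined on equivalence classes and is order- and incompatibility-preserving. Well-definedness is the claim that if $p\Vdash_{\mathbb P} \dot X\,\triangle\,\dot Y\in\bar I$, then
\[
(1,\dot{j(p)})\wedge ||j[\lambda]\in j(\dot X)|| \;=\; (1,\dot{j(p)})\wedge ||j[\lambda]\in j(\dot Y)||
\]
in $\mathcal B(\p(Z)/I * \dot{j(\mathbb P)})$; read through the dictionary above, this just amounts to the observation that when $p\in G$ and $\dot X^G\,\triangle\,\dot Y^G\in U$, the truth values of $j[\lambda]\in \bar j(\dot X^G)$ and $j[\lambda]\in \bar j(\dot Y^G)$ coincide. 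Order-preservation is verified identically.

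The crux is density. Given a nonzero $(q,\dot r)\in \p(Z)/I * \dot{j(\mathbb P)}$ with $q=[A]_I$, I would apply the \L o\'s theorem for the generic ultrapower to represent $\dot r$ by a function $f\colon A\to \mathbb P$, so that $\dot r=[f]_{\dot U_0}$, and define a $\mathbb P$-name $\dot X$ for $\{z\in A: f(z)\in\dot G\}$. A direct computation gives $j(\dot X)^{\bar j(G)}=\{z\in j(A):j(f)(z)\in \bar j(G)\}$, and since $j(f)(j[\lambda])=[f]_{\dot U_0}=\dot r$, the assertion $j[\lambda]\in j(\dot X)$ unfolds to $A\in \dot U_0$ together with $\dot r\in \bar j(G)$, that is, to $(q,\dot r)$. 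Choosing $p\in \mathbb P$ below which $\dot X$ is forced to be $\bar I$-positive---such a $p$ exists because nonzeroness of $(q,\dot r)$ rules out $\dot X$ being null on all of $\mathbb P$---one obtains $(p,\dot X)$ with $\iota(p,\dot X)\le (q,\dot r)$. I expect this density step to be the main obstacle: one must correctly align \L o\'s-style representatives of arbitrary $j(\mathbb P)$-objects with honest $\mathbb P$-names for subsets of $Z$, and separately ensure the output $\dot X$ is positive below some genuine $p\in\mathbb P$ so that $(p,\dot X)$ lies in the actual poset rather than merely in its Boolean completion.
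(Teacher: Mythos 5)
The paper does not prove Theorem \ref{dualitynicecase}; it quotes it as Foreman's Duality Theorem from \cite{foremanduality} and uses it as a black box, so there is no internal argument to compare against. Judged on its own merits, your outline follows the standard proof of this theorem in the $\kappa$-c.c.\ case, and the steps you isolate---the preservation facts for $\bar I$, the dictionary $(G,U)\leftrightarrow(U_0,\bar j(G))$ between generics, well-definedness of $\iota$ on equivalence classes, and density via \L o\'s representatives---are the right ones.

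One part of the density step, however, is not yet closed. You produce $\dot X$ with $||j[\lambda]\in j(\dot X)||=(q,\dot r)$ and then pick \emph{some} $p$ forcing $\dot X$ to be $\bar I$-positive; but an arbitrary such $p$ need not satisfy $(1,\dot{j(p)})\wedge(q,\dot r)\neq 0$, which is what density actually requires. The standard repair uses the converse direction of your dictionary, which you assert but do not establish: from a $\p(Z)/I*\dot{j(\mathbb P)}$-generic $(U_0,\hat G)$ below $(q,\dot r)$, recover $G=\{p\in\mathbb P : j(p)\in\hat G\}$ (its genericity over $V$ uses the $\kappa$-c.c.\ so that $j$ carries maximal antichains of $\mathbb P$ pointwise onto maximal antichains of $j(\mathbb P)$) and then the generic ultrafilter $U\supseteq U_0$ over $V[G]$ from the lifted embedding. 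Then $\dot X^G\in U$, hence $\dot X^G$ is $\bar I$-positive in $V[G]$, so some $p\in G$ forces $\dot X$ positive; for \emph{this} $p$ one has $j(p)\in\hat G$, so $\iota(p,\dot X)$ is nonzero and lies below $(q,\dot r)$. Alternatively, you could first check that $\iota$ preserves incompatibility (you announce a ``separative \dots embedding'' in your plan but never verify that half), from which nonzeroness would follow automatically. Either repair is routine, but as written the density argument does not quite deliver the conclusion.
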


Let $H_i$ be $\mathbb P_i$-generic over $V$, and let $H = H_i \restriction \rcol(\mu,\kappa)$.  By Laver's Theorem, let $U$ be a $\mu$-complete normal ultrafilter on $\p_\mu\kappa$ in $V[H]$ such that $j_U(G) \restriction (\mu + 1) = G * H$.  We force with the Radin forcing $\mathbb Q_\nu$ derived from $U$ in $V[H]$ to change the cofinality of $\mu$ to $\nu$.  Let us describe some of the important details of this forcing, which can be found in \cite{gitikhandbook}:
\begin{enumerate}
 \item $\mathbb Q_\nu$ can be written as $\bigcup_{i<\mu} X_i$, where each $X_i$ consists of pairwise compatible elements.  Thus $\mathbb Q_\nu$ is $\kappa$-c.c.
 \item $\mathbb Q_\nu$ preserves that $\mu$ is a limit cardinal and that $\nu$ is regular.
 \item $\mathbb Q_\nu$ is definable from the ultrafilter $U$.  More specifically, $\mathbb Q_\nu$ is constructed from a sequence $\vec u$ of $\mu$-complete measures on $V_\mu$, of length $\nu$, each derived from the ultrapower embedding $j_U$.  The first nontrivial measure of $\vec u$ is $\vec u(1) = \{ X \subseteq V_\mu : \mu \in j_U(X) \}.$
 \item $\mathbb Q_\nu$ adds a club $C \subseteq \mu$ of ordertype $\nu$, with the property that whenever $X \in \vec u(1)$, there is some $\beta < \mu$ such that $C \setminus \beta \subseteq X$.
 \item The generic filter for $\mathbb Q_\nu$ can be recovered from $C$.
\end{enumerate}

Although the forcing $\mathbb P_i/H$ adds subsets of $\kappa$ and thus makes $U$ no longer an ultrafilter on $\p_\mu\kappa$, $\mathbb P_i/H$ adds no subsets of $\mu$ nor $\mathbb Q_\nu$-names for subsets of $\mu$.
By item (1) above, Theorem \ref{dualitynicecase} implies that forcing with $\mathbb Q_\nu$ over $V[H_i]$ preserves the saturation of $I_\lambda$.  For it suffices to show that if $G^*$ is generic for $\p(\p_\kappa\lambda)/I_\lambda$ and $j : V[H_i] \to M \subseteq V[H_i][G^*]$ is the generic ultrapower embedding, then $j(\mathbb Q_\nu)$ is $j(\kappa)$-c.c.\ in $V[H_i][G^*]$.  Since the property of being the union of $\mu$ many sets of pairwise-compatible elements is upwards-absolute, this follows.

Let us argue that rigidity is preserved.
Let $e : \mathbb Q_\nu \to \mathcal B(\p(\p_{\kappa}\lambda)/I_\lambda * \dot{j(\mathbb Q_\nu)})$ be the restriction of the isomorphism $\iota$ to $\mathbb Q_\nu$.  Let $K \subseteq \mathbb Q_\nu$ be generic.  We have that  $\p(\p_{\kappa}\lambda)/\bar I_\lambda \cong B(\p(\p_{\kappa}\lambda)/I_\lambda * \dot{j(\mathbb Q_\nu)})/e[K]$.  If there were a nontrivial automorphism of $\p(\p_{\kappa}\lambda)/\bar I_\lambda$ in $V[H_i][K]$, then we would have a forcing extension of $V$ by $\rcol(\mu,\delta) * \dot{j(\mathbb Q_\nu)}$ with two distinct generics, $\hat H_0 * \hat K_0$ and $\hat H_1 * \hat K_1$, such that  $V[\hat H_0 * \hat K_0]= V[\hat H_1 * \hat K_1]$.

Now, since $\mu$ is below the critical point of the generic embedding $j$, the club $C \subseteq \mu$ associated to $K$ is the same as those associated to both $\hat K_0$ and $\hat K_1$, by the way the isomorphism $\iota$ is defined.  $\hat H_0$ and $\hat H_1$ determine generic ultrapowers $j_0,j_1$ of $V$ associated to the ideal $I_\lambda$.  If $\hat H_0 = \hat H_1$, then $j_0(\mathbb Q_\nu) = j_1(\mathbb Q_\nu)$, and thus $\hat K_0 = \hat K_1$ since these are definable from $C$ and the forcing $j_0(\mathbb Q_\nu)$.  So we must have $\hat H_0 \not= \hat H_1$.

Let $\beta < \delta$ be such that $\hat H_0$ includes a generic for $\col(\beta,{<}\delta)$ and $\hat H_1$ does not, so that $V[\hat H_0] \models \spl(\beta,\delta,V)$, and $V[\hat H_1] \models \neg \spl(\beta,\delta,V)$.  Both $j_0(\mathbb Q_\nu)$ and $j_1(\mathbb Q_\nu)$ are determined by normal ultrafilters $\hat U_0,\hat U_1$ on $\p_\mu\delta$ that live in $V[\hat H_0],V[\hat H_1]$ respectively.  Recall that $V = V'[G]$, where $G \subseteq V_\mu$ is generic for Laver's partial order.  If $i_0 : V[\hat H_0] \to N_0$ and $i_1 : V[\hat H_1] \to N_1$ are the respective ultrapower embeddings, then by Laver's Theorem, we have that $i_0(G) \restriction (\mu + 1) = \hat H_0$, and $i_1(G) \restriction (\mu + 1) = \hat H_1$.  Let $N_0 = N_0'[i_0(G * \hat H_0)]$ and $N_1 = N_1'[i_1(G * \hat H_1)]$.  Since the tail-ends of the respective iterations are sufficiently closed, we have that $N_0 \models \spl(\beta,\delta,N_0'[G])$ and $N_1 \models \neg \spl(\beta,\delta,N_1'[G])$.

Let $s_0,s_1$ be surjections from $\mu$ to $\beta$ such that $s_0 \in V[\hat H_0]$ and $s_1 \in V[\hat H_1]$.  We have that $\beta = \ot(i_0(s_0[\mu])) = \ot(i_1(s_1[\mu]))$.  Therefore, if $\vec u_0$ and $\vec u_1$ are the measure sequences associated to $j_0(\mathbb Q_\nu),j_1(\mathbb Q_\nu)$ respectively, then have that 
$$\{ \alpha < \mu : V[\hat H_0] \models \spl(\ot(s_0[\alpha]),\alpha^+,V'[G\restriction \alpha]) \} \in \vec u_0(1),$$
and
$$\{ \alpha < \mu : V[\hat H_1] \models \neg \spl(\ot(s_1[\alpha]),\alpha^+,V'[G\restriction \alpha]) \} \in \vec u_1(1).$$
Now, both $s_0$ and $s_1$ are in $V[\hat H_0 * \hat K_0]$.  Since $\mu$ has uncountable cofinality in this model, there is a club $D \subseteq \mu$ such that for all $\alpha \in D$, $s_0[\alpha] = s_1[\alpha]$. Therefore, there is a point $\alpha \in D \cap C$ and a $\gamma < \alpha^+$ such that $V[\hat H_0] \models \spl(\gamma,\alpha^+,V'[G\restriction \alpha])$ and $V[\hat H_1] \models \neg \spl(\gamma,\alpha^+,V'[G\restriction \alpha])$.  But since $V[\hat H_0]$ and $V[\hat H_1]$ have the same $\p(\alpha^+)$, this is a contradiction.  This completes the proof of Theorem \ref{sing}.

\hspace{1mm}

The conclusion of Theorem \ref{global} does not hold in the model of Theorem \ref{sing} constructed above.  This is because it is at odds with the use of Radin forcing.  Recall that the weak square principle at $\kappa$, abbreviated $\square^*_\kappa$, asserts that there is a sequence $\la \mathcal C_\alpha : \alpha < \kappa^+ \ra$ such that:
\begin{enumerate}
\item Each $\mathcal C_\alpha$ is a collection of club subsets of $\alpha$, and $|\mathcal C_\alpha | \leq  \kappa$.
\item If $C \in \mathcal C_\alpha$, and $\beta \in \lim C$, then $C \cap \beta \in \mathcal C_\beta$.
\end{enumerate}
If $\mu$ is inaccessible in some inner model with the same $\mu^+$, as in the proof of Theorem \ref{sing}, then it is easy to show that $\square^*_\mu$ holds.  The following proposition shows that the conclusion of Theorem \ref{global} implies that $\square^*_\mu$ fails for every singular $\mu$.  The argument is essentially the same as that for Theorem 10.1 in \cite{cfmstruc2}, but with slightly different hypotheses, and has nothing to do with rigidity.
\begin{proposition}
\label{wsquare}
Suppose $\kappa = \mu^+$, $\mu$ is regular, and $\lambda > \kappa$ is a strong limit cardinal of cofinality $<\mu$.  If there is a $\kappa$-complete normal ideal $I$ on $\p_\kappa\lambda'$, where $\lambda' > \lambda$, such that $\p(\p_\kappa\lambda')/I$ is $\mu$-strategically-closed, then $\square^*_\lambda$ fails.
\end{proposition}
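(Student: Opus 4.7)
I will assume toward contradiction that $\square^*_\lambda$ holds in $V$, force with $\p(\p_\kappa\lambda')/I$ to obtain a generic ultrapower embedding $j\colon V\to M\subseteq V[G]$, and derive a contradiction from the interaction between the coherent sequence and $j$, following the template of Theorem~10.1 in \cite{cfmstruc2}.

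My first step sets up the framework. Let $\vec{\mathcal C}=\la\mathcal C_\alpha:\alpha<\lambda^+\ra$ witness $\square^*_\lambda$ and put $\eta=\cf(\lambda)<\mu$. Since $I$ is $\kappa$-complete normal and $\p(\p_\kappa\lambda')/I$ is $\mu$-strategically closed (hence precipitous), $j$ satisfies $\crit j=\kappa$ and $j(\kappa)>\lambda'$; moreover no ${<}\mu$-sequence of ordinals is added to $V[G]$ over $V$, so $\eta$ stays regular and $j$ is continuous at all ordinals of $V$-cofinality ${<}\kappa$. In particular $j(\lambda)=\sup j[\lambda]$, and a standard cardinal count shows that $\gamma:=\sup j[\lambda^+]$ satisfies $\gamma<j(\lambda^+)$ with $\cf^{V[G]}(\gamma)=\lambda^+$.

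Next I extract a thread. I pick any $C\in j(\vec{\mathcal C})_\gamma$ (this family has $M$-size at most $j(\lambda)$) and form $E:=\{\alpha<\lambda^+:j(\alpha)\in C\}$, which is cofinal in $\lambda^+$ in $V[G]$ and, by closedness of $C$ together with continuity of $j$ at ordinals of $V$-cofinality ${<}\mu$, is closed at each of its limit points of $V$-cofinality $\eta$. For $\alpha\in E$ with $\cf^V(\alpha)=\eta$, the initial segment $E\cap\alpha$ has $V[G]$-cardinality $\eta<\mu$ and so lies in $V$ by $\mu$-strategic closure, while simultaneously $C\cap j(\alpha)\in j(\mathcal C_\alpha)$ by coherence of $j(\vec{\mathcal C})$. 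Continuity of $j$ at $\alpha$ and elementarity then match $E\cap\alpha$ with a specific element $D_\alpha\in\mathcal C_\alpha$. Running this uniformly along a cofinal sequence of such $\alpha$'s and applying $\mu$-strategic closure once more to collect the resulting $V$-data produces a single cofinal $D\subseteq\lambda^+$ in $V$ that threads $\vec{\mathcal C}$ on a cofinal class of $\beta$'s.

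Finally I derive a contradiction. Such a thread upgrades $\square^*_\lambda$ to an essentially $\square_\lambda$-like principle along a cofinal subset of $\lambda^+$; pushing it forward by $j$ produces an object in $M$ incompatible with the size bound $|j(\vec{\mathcal C})_\gamma|^M\leq j(\lambda)$, once one varies the choice of $C$ above and counts the resulting distinct threads. Alternatively, the thread can be used directly to refute the approachability reflection that the generic embedding $j$ enforces at cofinally many points of $\lambda^+$, exploiting $\cf(\lambda)<\mu$ and that $\lambda$ is a strong limit. The main obstacle is the matching of $E\cap\alpha$ with an element of $\mathcal C_\alpha$: because $|\mathcal C_\alpha|\leq\lambda>\kappa$, the embedding $j$ does not act pointwise on $\mathcal C_\alpha$, so one must leverage continuity of $j$ at $\alpha$ (from $\cf^V(\alpha)=\eta<\mu<\kappa$) together with closedness of $C$ and coherence of $\vec{\mathcal C}$ at smaller levels to produce a canonical pullback. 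This combinatorial bookkeeping is where my plan relies most directly on the argument of \cite{cfmstruc2}.
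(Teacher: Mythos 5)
There are several genuine gaps, one of which is an outright error. First, your claim that $\cf^{V[G]}(\gamma)=\lambda^+$ (for $\gamma=\sup j[\lambda^+]$) is false: since $\lambda'\geq\lambda^+$ and $j(\kappa)>\lambda'$, while $j(\mu)=\mu$, the ordinal $(\lambda^+)^V$ lies below $j(\kappa)=(\mu^+)^M$, so $|\lambda^+|^M=\mu$, and by $\mu$-distributivity the cofinality of $\gamma$ in $V[G]$ (and in $M$) is exactly $\mu$. This is not cosmetic: the paper uses $\cf^M(\delta)=\mu$ (with $\delta=\sup j[\lambda^+]$) together with the strong-limit hypothesis to replace $\vec{\mathcal C}$ by a sequence closed under taking club subsets of size $<\lambda$, then to choose $C\in j(\vec{\mathcal C})_\delta$ of ordertype exactly $\mu$, and then to pass to $D=C\cap j[\lambda^+]$ (a club in $\delta$, again in $j(\vec{\mathcal C})_\delta$ by the closure property). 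Only then do initial segments of $E=j^{-1}[D]$ have size $<\mu$, hence lie in $V$, and only then does $D\cap j(\alpha)=j(d)$ for some $d\in\mathcal C_\alpha$. Your version pulls back an arbitrary $C$ without intersecting with $j[\lambda^+]$, without the enlargement of $\vec{\mathcal C}$, and without controlling $\ot(C)$; the assertion that $E\cap\alpha$ has size $\eta<\mu$ is then simply unsupported, and the "canonical pullback" you flag as the main obstacle cannot be produced.

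Second, the contradiction step is not correct as sketched. Producing a single thread $D\in V$ through $\vec{\mathcal C}$ does not refute $\square^*_\lambda$—weak square sequences are perfectly compatible with threads (the obstruction to threading is $\square_\lambda$, not $\square^*_\lambda$)—and the vague cardinality comparison with $|j(\vec{\mathcal C})_\gamma|^M$ does not obviously yield anything. The paper's actual mechanism is quite different: it never claims a thread lies in $V$. Instead, working in $V$ with a name for the increasing enumeration of $E$, it uses the $\mu$-strategic closure (crucially $\cf(\lambda)+1\leq\cf(\lambda)^+ \leq\mu$, so the strategy can run for $\cf(\lambda)$ stages with lower bounds at limits) to build a $\lambda$-branching tree of conditions of height $\cf(\lambda)+1$, with all branches converging to the same $\beta^*<\lambda^+$ and each top-level condition forcing a distinct value for $E\cap\beta^*\in\mathcal C_{\beta^*}$. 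Since $\lambda$ is a strong limit of cofinality $\cf(\lambda)$, there are $\lambda^{\cf(\lambda)}>\lambda$ branches, contradicting $|\mathcal C_{\beta^*}|\leq\lambda$. Your sketch does not contain this counting argument, and the two "alternatives" you offer at the end are not arguments. So while the opening moves (generic ultrapower, pulling back along $j$) match the paper, the heart of the proof is missing.
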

\begin{proof}
Assume to the contrary that there exists such an ideal and a $\square^*_\lambda$-sequence $\vec{\mathcal C} = \la \mathcal C_\alpha : \alpha < \lambda^+ \ra$.  Since $\lambda$ is a strong limit, we can assume that if $C \in \mathcal C_\alpha$, and $D \subseteq C$ is club in $\alpha$ and has size $<\lambda$, then $D \in \mathcal C_\alpha$.

Let $G \subseteq \p(\p_\kappa\lambda')/I$ be generic, and let $j : V \to M \subseteq V[G]$ be the associated embedding.  Let $\gamma = (\lambda^+)^V$.  Since $j(\kappa) > \lambda'$, $M \models |\gamma| = \mu$, and by the strategic closure, $M \models \cf(\gamma) = \mu$.  Let $\delta = \sup j[\gamma] < j(\gamma)$, and let $C \in j(\vec{\mathcal C})_\delta$ have ordertype $\mu$.  Since $\mu < \crit(j)$, and $V$ and $V[G]$ share the same ${<}\mu$-sequences, $j[\gamma]$ is ${<}\mu$-closed.  Thus $D = C \cap j[\gamma]$ is club in $\delta$ and thus a member of $j(\vec{\mathcal C})_\delta$.  If $\alpha < \gamma$ is such that $j(\alpha)$ is a limit point of $D$, then $D \cap j(\alpha) \in  j(\mathcal C_\alpha)$, and it is also in $V$ since it has size $<\mu$.  If we take $d \in V$ such that $j(d) = D \cap j(\alpha)$, then $d \in \mathcal C_\alpha$.  If $E = j^{-1}[D]$, then $E$ is a club subset of $\gamma$ in $V[G]$ such that for every $\alpha \in \lim E$, $E \cap \alpha \in \mathcal C_\alpha$.

In $V$, let $\la \dot \alpha_i : i < \mu \ra$ name the increasing enumeration of $E$.  Using the strategic closure of $\p(\p_\kappa\lambda')/I$, we build a tree of height $\cf(\lambda)+1$ of conditions that make incompatible decisions about initial segments of $E$.  Suppose inductively that for some $\eta \leq \cf(\lambda)$, we have ordinals $i_\sigma$ and $\xi_\sigma$ and conditions $p_\sigma$, indexed by sequences $\sigma \in \lambda^{<\eta}$, such that:
\begin{enumerate}
\item For each $\sigma \in \lambda^{<\eta}$, $\la p_{\sigma \restriction \gamma} : \gamma \in \dom \sigma \ra$ is a descending sequence conforming to the strategy witnessing the strategic closure of $\p(\p_\kappa\lambda')/I$.
\item If $\dom \sigma < \dom \sigma'$, then $\xi_\sigma < \xi_{\sigma'}$.
\item If $\sigma \not= \sigma'$ and their domains are the same successor ordinal, then $\xi_\sigma \not= \xi_{\sigma'}$.
\item For $\zeta+1 < \eta$, $\sigma \in \lambda^\zeta$, and $\beta < \lambda$, $p_{\sigma ^\frown \beta} \Vdash \dot\alpha_{i_{\sigma}} = \check\xi_{\sigma ^\frown \beta}$, and $p_{\sigma ^\frown \beta}$ decides $E \cap \xi_{\sigma ^\frown \beta}$.
\end{enumerate}
For each $\sigma \in \lambda^\eta$, we can take a lower bound $p_\sigma$ to  $\la p_{\sigma \restriction \gamma} : \gamma < \eta \ra$.  Since $\lambda^+$ is regular, there is $i_\sigma <\mu$ such that the possible values for $\dot\alpha_{i_\sigma}$ forced below $p_\sigma$ are unbounded in $\lambda^+$.  Choose an antichain $\la p_{\sigma ^\frown \beta} : \beta < \lambda \ra$ of conditions below $p_\sigma$ that make distinct decisions $\xi_{\sigma ^\frown \beta}$ for $\dot\alpha_{\sigma}$, all above $\sup_{\tau \in \lambda^{<\eta}} \xi_\tau$, also deciding $E \cap  \xi_{\sigma ^\frown \beta}$, and conforming to the strategy witnessing strategic closure.

Let $\beta^* = \sup_{\sigma \in \lambda^{<\cf(\lambda)}} \xi_\sigma <\lambda^+$.  For each $\sigma \in \lambda^{\cf(\lambda)}$, $p_\sigma$ forces that $\beta^*$ is a limit point of $E$, and so $p_\sigma \Vdash \dot E \cap \beta^* \in \check{\mathcal C}_{\beta^*}$.  But there are $\lambda^{\cf(\lambda)} > \lambda$ distinct decisions, contradicting that $|\mathcal C_{\beta^*}|\leq \lambda$.\end{proof}

\bibliographystyle{amsplain.bst}
\bibliography{masterbib}

\end{document}